\numberwithin{equation}{section}
\newcommand{\eps}{\varepsilon}
\newcommand{\D}{\mathbf{D}}
\newcommand{\E}{\mathbf{E}}
\newcommand{\h}{\mathbf{H}}
\newcommand{\N}{\mathbf{N}}
\newcommand{\Z}{\mathbf{Z}}
\newcommand{\p}{\mathbf{P}}
\newcommand{\R}{\mathbf{R}}
\newcommand{\CA}{\mathcal {A}}
\newcommand{\CC}{\mathcal {C}}
\newcommand{\CD}{\mathcal {D}}
\newcommand{\CF}{\mathcal {F}}
\newcommand{\CL}{\mathcal {L}}
\newcommand{\CT}{\mathcal {T}}
\newcommand{\CW}{\mathcal W}
\newcommand{\CWW}{\mathcal V}
\newcommand{\CH}{\mathcal {H}}
\newcommand{\SLE}{{\rm SLE}}
\newcommand{\CLE}{{\rm CLE}}
\newcommand{\BCLE}{\rm BCLE}
\newcommand{\wt}{\widetilde}
\newcommand{\giv}{\,|\,}
\renewcommand{\d}{\delta}
\newcommand{\sol}[1]{{}}
\newcommand{\wh}{\widehat}
\definecolor{pinegreen}{rgb}{0.0, 0.47, 0.44}
\newcommand{\IG}{{\mathrm{IG}}}
\numberwithin{equation}{section}
\newtheorem{theo}{Theorem}[section]
\newtheorem{rema}[theo]{Remark}
\newtheorem{lemm}[theo]{Lemma}
\newtheorem{prop}[theo]{Proposition}
\newtheorem{coro}[theo]{Corollary}
\newtheorem{state}[theo]{Statement}
\begin{document}

\title[$\CLE_{\kappa'}$ on LQG]{Non-simple conformal loop ensembles on Liouville quantum gravity and the law of CLE percolation interfaces}

\author{Jason Miller}
\address{Statslab, Center for Mathematical Sciences, University of Cambridge, Wilberforce Road, Cambridge CB3 0WB, UK}
\email {jpmiller@statslab.cam.ac.uk}

\author{Scott Sheffield}
\address{
Department of Mathematics, MIT,
77 Massachusetts Avenue,
Cambridge, MA 02139, USA}
\email{sheffield@math.mit.edu}

\author{Wendelin Werner}
\address{Department of Mathematics, ETH Z\"urich, R\"amistr. 101, 8092 Z\"urich, Switzerland} 
\email{wendelin.werner@math.ethz.ch}

\dedicatory{This paper is dedicated to the memory of Harry Kesten}

\begin{abstract}
We study the structure of the Liouville quantum gravity (LQG) surfaces that are cut out as one explores a conformal loop-ensemble CLE$_{\kappa'}$ for $\kappa'$ in $(4,8)$ that is drawn on an independent $\gamma$-LQG surface for $\gamma^2=16/\kappa'$. The results are similar in flavor to the ones from our paper \cite {CLE_LQG} dealing with CLE$_{\kappa}$ for $\kappa$ in $(8/3,4)$, where the loops of the CLE are disjoint and simple. In particular, we encode the combined structure of the LQG surface and the CLE$_{\kappa'}$ in terms of stable growth-fragmentation trees or their variants, which also appear in the asymptotic study of peeling processes on decorated planar maps.

  This has consequences for questions that do a priori not involve LQG surfaces: Our previous paper \cite {cle_percolations} described the law of interfaces obtained when coloring the loops of a CLE$_{\kappa'}$ independently into two colors with respective probabilities $p$ and $1-p$. This description was complete up to one missing parameter $\rho$. The results of the present paper about CLE on LQG allow us to determine its value in terms of $p$ and $\kappa'$.
  
  It shows in particular that CLE$_{\kappa'}$ and CLE$_{16/\kappa'}$ are related via a continuum analog of the Edwards-Sokal coupling between FK$_q$ percolation and the $q$-state Potts model (which makes sense even for non-integer $q$ between $1$ and $4$) if and only if $q=4\cos^2(4\pi / \kappa')$. This provides further evidence for the long-standing belief that CLE$_{\kappa'}$ and CLE$_{16/\kappa'}$ represent the scaling limits of FK$_q$ percolation and the $q$-Potts model when $q$ and $\kappa'$ are related in this way. 
  
  Another consequence of the formula for $\rho(p,\kappa')$ is the value of half-plane arm exponents for such divide-and-color models (a.k.a. fuzzy Potts models) that turn out to take a somewhat different form than the usual critical exponents for two-dimensional models.
  
  \end {abstract}

\maketitle

\section{Introduction} 

Most of this paper will be devoted to the study of the collection of quantum surfaces that one obtains when drawing a non-simple conformal loop ensemble ($\CLE$) on top of an independent Liouville quantum gravity (LQG) surface.  This study will imply statements for $\CLE$ that do not involve LQG and that we choose to briefly present in the first two sections of this introduction. 

\subsection {A divide-and-color exponent} 
\label {S11}
In view of the fact that this paper is dedicated to the memory of Harry Kesten, it seems fitting to start it with one very particular sub-instance of the results that will be derived here which have direct consequences for a lattice-based model that is directly related to Bernoulli percolation on the square grid. 

Start with critical Bernoulli bond percolation on $\Z \times \N$ (i.e., where edges are open or closed independently  with probability $1/2$). This defines a configuration on edges, which in turn partitions the vertices into clusters.
Next, we choose a parameter $p \in (0,1)$, and we color the clusters independently in red and blue with probability $p$ and $1-p$.  
We are now interested in the event $E_R = E_R (p)$ that there exists a path of red sites joining the origin to
the semi-circle of radius $R$ around the origin. The results of the present paper will essentially imply that:

\begin {state}
\label {statement1}
If critical Bernoulli percolation is conformally invariant in the scaling limit, then $\p[ E_R (p)] = R^{-a(p) + o(1)}$ as $R \to \infty$, where 
$$a(p) = \frac {1}{6} \left( 2  - \frac 3 \pi  \arctan \frac { p\sqrt { 3} }{2-p}    \right) \left( 1 - \frac 3 \pi 
\arctan \frac { p\sqrt { 3}}{ 2-p}\right).$$ 
\end {state}

We can make the following four comments at this early stage: 
\begin {enumerate} 
 \item The appearance of the $\arctan (\cdot)$ function in the above formula suggests that its derivation will involve arguments somewhat different from those used to derive the ``usual'' critical exponents, which are computed only using SLE martingales. Indeed, as we shall explain in the present paper, these $\arctan$ type formulas appear to be a by-product of the decomposition of LQG-type surfaces in terms of L\'evy trees. 
 \item This formula is part of a bigger picture. Similar statements hold when critical Bernoulli percolation is replaced by a critical FK$_q$ random cluster model for $q \in (0,4)$ (so that we are now dealing with models known as fuzzy Potts models 
\cite {mv1995fuzzypotts,hagg1999fuzzypotts}). For instance, for the FK$_2$-Ising model, the formula for this one-arm half-plane exponent is  
\begin {equation} 
a(p) = \frac 1 6 \left(1-\frac 2 \pi \arctan \frac {p}{1-p}\right) \left(3 - \frac 4 \pi \arctan \frac {p}{1-p}\right). 
\end {equation}
 As we shall explain in the next section, these formulas follow from the detailed description of the red/blue interfaces obtained when one colors loops in a Conformal Loop Ensemble independently. 
 \item A similar statement can be formulated when one starts with site percolation on the triangular lattice (and then colors the ``cluster of edges'' independently) instead of Bernoulli percolation on the square lattice. This has the advantage that the result is then unconditional (as this percolation model is known to be conformally invariant) but it is then a little less natural (similarly, for the aforementioned Ising-FK$_2$ model, the result is also unconditional). We will not discuss the discrete to continuum convergence here and leave it for some upcoming paper in which these discrete divide-and-color exponents will be discussed further.
 \item The exponent $1/3$ that shows up in the $p \to 0+$ limit for $a(p)$ in Statement \ref {statement1} is the usual one-arm boundary exponent for critical percolation (which is what one would expect when one looks at the limit of the exponents) -- the same remark applies for the exponent $1/2$ that appears in the limit for the FK$_2$ model.
\end {enumerate}

\subsection{The $q(\kappa')$ formula for CLE percolation} 

CLEs are random families of loops in a simply connected domain (one can for instance consider the unit disk $\D$) that satisfy certain natural properties (conformal invariance and some version of a spatial Markov property) that make them the natural candidates for scaling limits of interfaces in critical two-dimensional models from statistical physics with a second-order phase transition \cite{SHE_CLE,SHE_WER_CLE}. They are parameterized by a real parameter $\kappa \in (8/3, 8)$; each loop in a $\CLE_\kappa$ is a loop-version of the Schramm-Loewner evolution ($\SLE_\kappa$) \cite{S0} and one can divide the CLEs into two regimes. In the regime $\kappa \in (8/3,4]$, a $\CLE_\kappa$ consists of a pairwise disjoint collection of simple loops which do not intersect the domain boundary while in the regime $\kappa \in (4,8)$ a $\CLE_\kappa$ consists of a collection of non-simple loops which can touch each other and the boundary of the domain. It is now customary (and we will use this notation throughout the present paper) to denote the non-simple CLEs by $\CLE_{\kappa'}$ for $\kappa' \in (4,8)$ (this  will prevent some confusion when discussing the ``duality'' statements).  The collection of loops are defined directly in the continuum with no reference to discrete models, but it can be useful to have in mind some of the main conjectures relating discrete models to CLE, as this can help to guide our intuition and our understanding of CLEs.

The following conjectures are particularly relevant to the results that we will present in this section (conjectures related to random planar maps will be behind the scenes when we will discuss CLE on LQG):

\begin{itemize}
\item The collection of interfaces (which are all loops on a medial lattice) in a critical FK$_q$-percolation model with free boundary conditions converges to a $\CLE_{\kappa'}$. When $q$ increases from $0^+$ to $4$, the corresponding value of 
$\kappa'$ should decrease from $8^-$ to $4$. This convergence has been proved in the cases $q=0^+$ (the uniform spanning tree) \cite{LSW04}, $q=1$ \cite{S01,CN06} (but only for site percolation on the triangular lattice, which is not really a planar bond percolation model) and $q=2$ \cite{s2010ising,ks2016fkconvergence,gw2019fk} (which is  the case related to the Ising model). Note that by duality, the same is essentially true when one considers an FK$_q$ model with wired boundary conditions.
\item  Consider a critical $q$-Potts model with uniform boundary conditions (say with color $1$ on the boundary) for $q=2,3,4$, and consider the scaling limit of the law of the cluster of color $1$ that contains the boundary points. Its inner boundary consists of a collection of closed disjoint loops. When the mesh of the lattice goes to $0$, this collection of loops should converge to the outermost loops in a $\CLE_\kappa$ for some value $\kappa \in (8/3, 4]$ (the limit of the boundary-touching cluster would be the set of points surrounded by no $\CLE_\kappa$ loop, that is called the $\CLE_{\kappa}$ carpet).
\end{itemize}

In the discrete setting, for integer $q \ge 2$, the Potts model and the FK$_q$ percolation models can be coupled as follows (see for instance \cite{GRIM_CLUSTER_BOOK}): When one chooses one of the $q$ colors uniformly at random and independently for each of the FK$_q$ clusters, then one obtains the Potts model (which was the initial motivation to study FK-percolation in \cite {FK}), and conversely, the FK$_q$ percolation can be viewed as ``Bernoulli-bond percolation'' in each of the Potts clusters (i.e., edges joining two sites with different colors are closed, and one tosses an independent biased coin for each of the other ones) --- this is sometimes referred to as the Edwards-Sokal coupling after \cite {ES}.  

This {\em suggests} that the conformal loop ensembles should have the following properties, which can be roughly stated without reference to any discrete model (and without requiring $q$ to be an integer).  Suppose that $\kappa \in (8/3, 4)$ and $\kappa' = 16 / \kappa \in (4,6)$.

\begin{enumerate}[(a)]
\item A $\CLE_{\kappa'}$ can be viewed as a model for critical Bernoulli percolation {\em within} a $\CLE_\kappa$ carpet.
\item Conversely, suppose we start with a $\CLE_{\kappa'}$ (in its nested version) and consider its collection of clusters (corresponding to wired boundary conditions --- we will detail how to define them in the next paragraph).  Then we color in blue each of its clusters independently with some probability $1/q(\kappa')$, except that the outermost cluster (which contains the boundary of the domain) is colored blue regardless. Then {\em the blue connected component touching the boundary is distributed like a $\CLE_\kappa$ carpet.}
\end{enumerate}

Let us briefly explain how to define the collection of clusters that are defined by a $\CLE_{\kappa'}$. We say that a point is surrounded by a $\CLE_{\kappa'}$ loop if the index of the loop around the point is non-zero -- we call $i(\CL)$ the set of such points. By convention, we will also view the domain boundary $\partial \D$ as one of the $\CLE_{\kappa'}$ loops.  We say that a loop in the $\CLE_{\kappa'}$ is an $n$th level loop if it is surrounded by exactly $n$ other loops in the $\CLE_{\kappa'}$ (so $\partial \D$ is the only $0$th level loop). 
 For each $n$th level loop $\CL$ in the $\CLE_{\kappa'}$ such that $n \ge 0$ is even, we define the cluster $K(\CL)$ surrounded by $\CL$ to be the closure of $i(\CL) \setminus \cup_{\CL'} i(\CL')$, where the union of $\CL'$ is taken over all $(n+1)$st generation loops surrounded by $\CL$.  The boundary cluster $K (\partial \D)$ in this $\CLE_{\kappa'}$ will be the closure of set of points $z$ with the property that the index of any of the $\CLE_{\kappa'}$ loop around $z$ is $0$ --- this is sometimes called the $\CLE_{\kappa'}$ gasket, by analogy with the Sierpinski gasket. (This definition of cluster mimics the definition of clusters associated to an FK-model with wired boundary conditions.)

The two results (a) and (b) were actually established in \cite{cle_percolations}, building on various inputs and in particular on the imaginary geometry couplings \cite{MS_IMAG} with the Gaussian free field (GFF), but except for the case $\kappa'=16/3$, the value $q(\kappa')$ was not determined. One outcome of the present paper is the following statement: 

\begin {theo}[The $q(\kappa')$ formula for CLE percolation]
\label {thm1}
The value $q(\kappa')$ in (b) is equal to $ 4 \cos^2 (4 \pi / \kappa')$.
\end {theo}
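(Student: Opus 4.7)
The plan is to compare two descriptions on a common $\gamma$-LQG surface (with $\gamma^2 = \kappa = 16/\kappa'$): the decomposition of a $\CLE_{\kappa'}$ established in the body of this paper, and the decomposition of a $\CLE_\kappa$ from the authors' earlier work \cite{CLE_LQG}. The present paper expresses the quantum boundary lengths of the disks cut out by a $\CLE_{\kappa'}$ exploration as the (marked) jumps of a stable L\'evy process, with explicit index and asymmetry depending on $\kappa'$. Independently coloring each $\CLE_{\kappa'}$ cluster blue with probability $1/q$ then acts on this process by a Bernoulli thinning of its jumps: a blue jump is smoothed out (because the corresponding cluster is absorbed into the boundary-touching blue component), whereas a red jump persists and produces a genuine outer loop of the blue gasket.

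I would then trace the boundary of the blue cluster containing $\partial \D$ using the thinned exploration and compute the characteristic exponent of the resulting L\'evy process: Bernoulli thinning of the jumps of an (asymmetric) stable process produces another L\'evy process whose exponent is an explicit function of $q$. On the other hand, \cite{CLE_LQG} describes the quantum boundary lengths of the loops on the outer boundary of the $\CLE_\kappa$ carpet on the same LQG surface as the jumps of a specific stable L\'evy process. By statement (b), already established in \cite{cle_percolations}, the blue gasket has the law of a $\CLE_\kappa$ carpet, so the two process descriptions must agree in law. This yields a single equation for $q$ in terms of $\kappa'$.

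Solving this equation and simplifying via a double-angle identity for cosine produces the formula $q = 4\cos^2(4\pi/\kappa')$. As sanity checks, one recovers $q=1$ at $\kappa' = 6$ (Bernoulli percolation) and $q=2$ at $\kappa' = 16/3$ (FK--Ising), both in agreement with the known discrete couplings, and $q\to 4$ (resp.\ $q \to 0$) as $\kappa' \to 4^+$ (resp.\ $\kappa' \to 8^-$), matching the expected conjectural endpoints.

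The main obstacle will be the first step: carrying out the coloring on the LQG side in a way that is compatible with the growth-fragmentation encoding, and identifying the boundary of the blue component as a clean jump record of a modified stable process. The nested cluster-tree structure of $\CLE_{\kappa'}$ interacts in a delicate way with the independent Bernoulli coloring, since the blue component containing $\partial \D$ propagates through several generations of that tree. Making this rigorous in the continuum requires decomposing the exploration into i.i.d.\ experiments inside each quantum disk and then re-assembling them compatibly with the L\'evy/growth-fragmentation description. Once this reduction is in place, the resulting characteristic-exponent computation is essentially routine, and the arctan-type formulas that appear in Statement \ref{statement1} are a direct trace of it.
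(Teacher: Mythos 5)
Your proposal is a genuinely different route from the paper's, but the central step is unsound as stated. You propose that coloring the clusters acts on the CLE$_{\kappa'}$ exploration's stable L\'evy process by a ``Bernoulli thinning of its jumps,'' and that matching the resulting characteristic exponent to the CLE$_\kappa$-on-LQG process from \cite{CLE_LQG} yields an equation for $q$. This cannot work for two reasons. First, Bernoulli thinning of jumps of a stable process simply multiplies the L\'evy measure by a constant and therefore returns the same stable process up to scale; no nontrivial equation in $q$ can fall out of it, so even as a heuristic the thinning picture is too crude. Second, what actually happens when a cluster is colored blue is not that the corresponding jump disappears, but that the exploration continues recursively inside the absorbed region and spawns a whole sub-exploration there (this is the nested-tree structure you flag as an obstacle at the end); the boundary of the blue component is carried by a curve whose law is an $\SLE_\kappa(\rho;\kappa-6-\rho)$ rather than a thinned version of the original trace. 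Relatedly, the two processes you want to match do not even share an index: the CLE$_{\kappa'}$ exploration produces $\alpha'$-stable processes with $\alpha'=4/\kappa'\in(1/2,1)$, while the \cite{CLE_LQG} decomposition for CLE$_\kappa$ lives at index $\alpha=4/\kappa\in(1,2)$, so a direct law-matching argument needs a transformation far more delicate than thinning.

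The paper's actual route never compares to the CLE$_\kappa$-on-LQG process. It proves Theorem~\ref{thm:betarhoformula} first: it shows (Theorem~\ref{thm3}) that the boundary-length processes $L$ and $R$ of the colored exploration are independent $\alpha'$-stable, computes the up/down jump ratios $U_L$ and $U_R$ in terms of $\rho$ using Bertoin's positivity-parameter formula (\cite[Ch.~VIII, Lemma~1]{bertoin96levy}) together with the scaling of forested-wedge bead lengths (Remark~\ref{scalingremark}); then, crucially, a target-invariance argument in the generalized-disk exploration (Section~\ref{S51}) shows $A_{-,L}(p)=A_{-,R}(p)$, which upgrades the ratio identity $A_{+,L}/A_{+,R}=p/(1-p)$ into $U_L/U_R=p/(1-p)$ and hence the $\rho(p,\kappa')$ formula. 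Finally, Theorem~\ref{thm1} is deduced from Theorem~\ref{thm:betarhoformula} via \cite[Theorem~7.10]{cle_percolations}, which identifies the value of $\rho$ at which the $\BCLE_\kappa(\rho)$ obtained from the coloring is a genuine $\CLE_\kappa$ carpet. Your sanity checks ($q=1$ at $\kappa'=6$, $q=2$ at $\kappa'=16/3$, endpoints $q\to 0,4$) are correct and consistent with the final formula, but the mechanism you sketched would not produce it; the positivity-parameter computation, the target-invariance step, and the appeal to Theorem~7.10 of \cite{cle_percolations} are all essential and absent from your plan.
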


This therefore provides a direct derivation of the relation between $q$ and $\kappa'$, without reference to any discrete model calculation and completes the solution to \cite[Problem~8.10]{SHE_CLE}. Note that as explained in \cite{cle_percolations}, symmetry reasons implied that $q(16/3) = 2$ (which is consistent with the fact that the FK$_2$ model that is related to the Ising model converges to $\CLE_{16/3}$ while the Ising model converges to $\CLE_3$ \cite{BEN_HONG}), but this was the only value of $\kappa'$ for which $q(\kappa')$ was known. So for instance, the fact that  $q(\kappa')= 3$ for $\kappa' = 24/5$ (so that $\CLE_{10/3}$ carpets should describe the scaling limits of $(q=3)$-Potts clusters) is new. In particular, establishing the convergence of FK$_3$ to CLE$_{24/5}$ would then automatically imply the joint convergence of FK$_3$ and the coupled $3$-state Potts model to CLE$_{24/5}$ with the coupled CLE$_{10/3}$ obtained by the coloring procedure.  We see also that $q=4$ is the maximal possible value, just out of continuum CLE considerations (the threshold at $q=4$ features also in the nature of the phase transition for planar FK$_q$ models derived in \cite{DST,Detal}).  

To our knowledge, the only other instance where the conjectural relation between $q$ and $\kappa'$ has been derived from the continuum objects, is via CLE crossing events as described in \cite{mw2018connection}. For a brief survey of other arguments that led to this conjecture, we refer to \cite{mw2018connection}. 

It is actually shown in \cite{cle_percolations} that for any $\kappa' \in (4,8)$, if one uses any value $p$ (for $p <1$) to color the $\CLE_{\kappa'}$ clusters using the same procedure as before ($p$ replaces $1/q$), then one obtains one of the so-called boundary conformal loop ensembles $\BCLE_\kappa (\rho)$ (where $\rho$ is a parameter which is determined by $p$) for $\kappa = 16/ \kappa'$. The present paper will show {\em which} $\BCLE_\kappa(\rho)$ is obtained for each value of $p$. One consequence of this fact is that it allows us to describe the ``non-blue'' clusters that appear in the holes of the blue clusters in the above construction (another way of phrasing this result deals with the so-called 
``full'' $\SLE_{\kappa'}^\beta (\kappa' -6)$ processes as defined in \cite{cle_percolations} and more precisely with the identification of the law of their trunk).

\begin{figure}[pt!]
\includegraphics[width=0.48\textwidth]{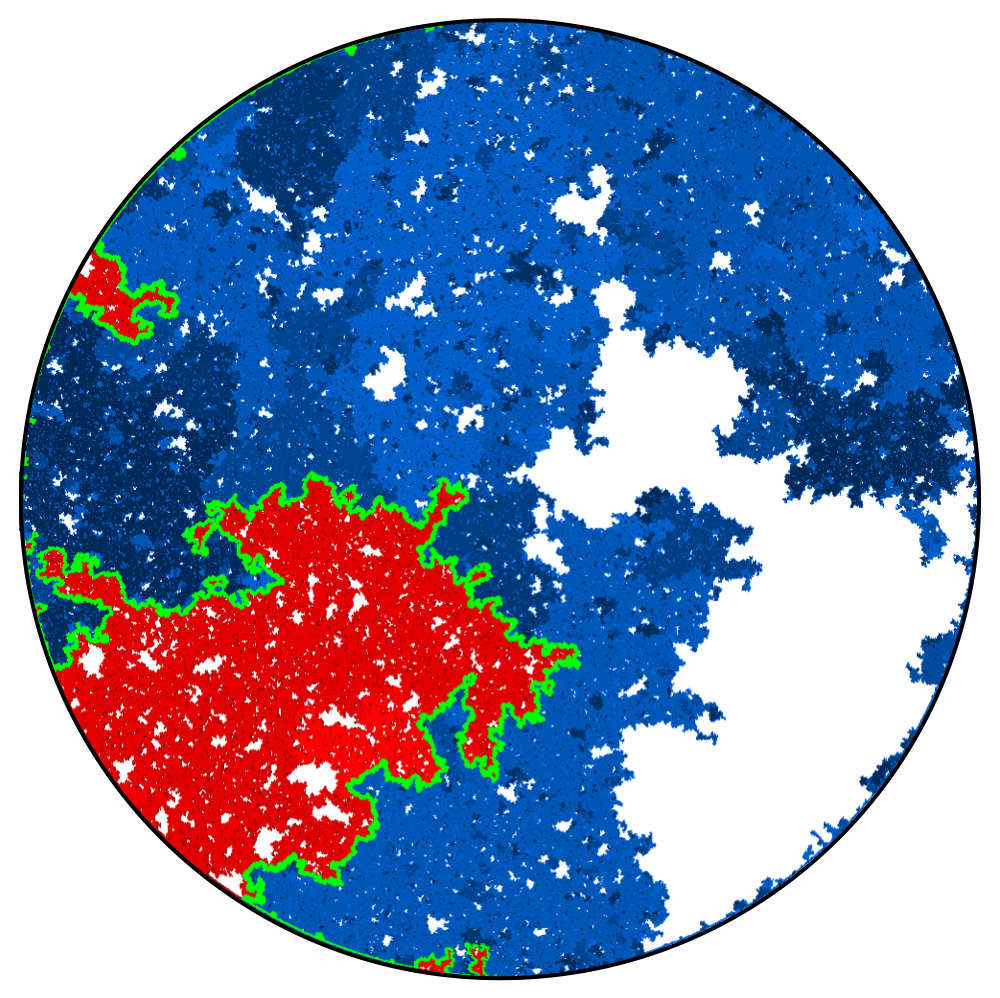}\hspace{0.02\textwidth}\includegraphics[width=0.48\textwidth]{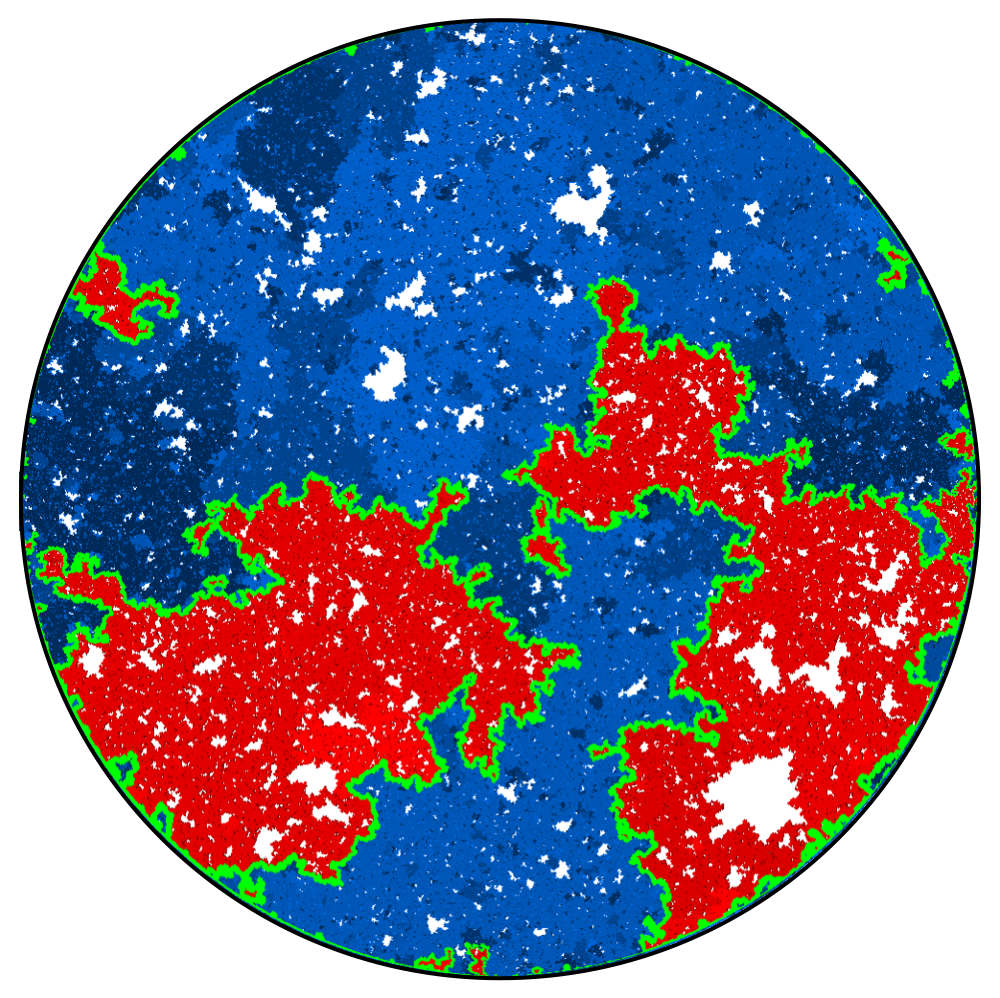}
	\caption{{\bf Top:} Simulation of the interface (in green) of the clusters of red $\CLE_6$ loops touching the left half-circle and the clusters of blue $\CLE_6$ loops touching the right half-circle for $p=1/4$. {\bf Bottom:} All of the boundary touching interfaces are shown.}
	\label{interface}
	\label{f1}
\end{figure}

An essentially equivalent way to formulate this result goes as follows: Consider a $\CLE_{\kappa'}$ in the upper half-plane, and look only at its outermost loops (the other ones will not matter here). We fix $p \in [0,1]$ and color each of the loops (and their interior) independently in red or blue with probability $p$ and $1-p$ respectively. We now have a coloring of the plane using two colors, and we can then look at the outer boundary of the closure of the union of the red connected components that touch the negative half-line. It turns out (see \cite{cle_percolations}) that it consists of the negative half-line together with some simple curve $\eta$ from $0$ to $\infty$, which is also on the outer boundary of the union of the blue connected components that touch the positive half-line (see Figure \ref {f1} for a simulation in the unit disk). It is furthermore shown in \cite{cle_percolations} that there exists $\rho \in [-2, \kappa-4]$ such that the law of $\eta$ is that of an $\SLE_\kappa (\rho; \kappa -6- \rho)$ process. The present paper will provide the explicit formula for $\rho$ as a function of $\kappa'$ and $p$, and therefore complete the identification of the law of $\eta$ (which was only known for $p=0$, $p=1/2$ and $p=1$):

\begin {theo}[The interfaces for $\CLE_{\kappa'}$ percolation processes]  
\label {thm:betarhoformula}
The relation between $p \in [0,1]$ and $\rho \in [-2, \kappa-4 ]$ when $\kappa' \in (4,8)$ is given by 
\[ p = \frac {\sin (\pi (\rho +2) / 2)}{\sin (\pi (\rho+2) /2 ) + \sin (\pi ((\kappa-6 -\rho) +2)/2)} \]
or equivalently 
$$ \rho + 2 = \frac 2 \pi \arctan \left( \frac {\sin ( \pi \kappa /2)}{1 + \cos (\pi \kappa /2) - (1/p)} \right).$$
\end {theo}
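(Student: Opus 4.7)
Write $\kappa = 16/\kappa' \in (2,4)$ and $\gamma = \sqrt{\kappa}$, the LQG parameter matched to $\CLE_{\kappa'}$ by the main results of this paper. The strategy is to compute a single LQG boundary-length ratio in two different ways on the same decorated surface: first via the Poissonian/L\'evy description of $\CLE_{\kappa'}$ boundary-touching clusters together with the $p$-colouring, and second via the conformal-welding description of the interface $\eta$ as an $\SLE_\kappa(\rho;\kappa-6-\rho)$ curve.

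First, sample a $\CLE_{\kappa'}$ on top of an independent $\gamma$-LQG surface with two marked boundary points $0$ and $\infty$ (for concreteness, a doubly-marked quantum disk of appropriate boundary lengths, or a suitable quantum wedge). By the cluster/L\'evy decomposition established in the body of the paper, the outermost $\CLE_{\kappa'}$ clusters attached to each of the two boundary arcs are encoded by a Poisson point process of jumps following an explicit one-sided $\alpha$-stable law with $\alpha=\kappa'/4$. Colouring each outermost cluster independently red with probability $p$ and blue with probability $1-p$ then splits this point process into two independent L\'evy point processes of relative intensities $p$ and $1-p$. By the result recalled from \cite{cle_percolations}, the curve $\eta$ is exactly the boundary between the red boundary-touching component on the left arc and the blue boundary-touching component on the right arc, so the ratio of total L\'evy mass to the left of $\eta$ to total L\'evy mass to its right equals $p/(1-p)$ in the colouring description.

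Second, the same ratio must agree with its value on the $\SLE_\kappa(\rho;\kappa-6-\rho)$ side. The conformal welding theory, extended in the present paper to the two-force-point setting and coupled to the above $\CLE_{\kappa'}$-on-LQG decomposition, identifies the left and right quantum boundary lengths accumulated by $\eta$ with two independent $\alpha$-stable L\'evy subordinators whose L\'evy measures carry explicit trigonometric prefactors depending on the two force-point weights. By analogy with the $\sin(\pi\,\cdot\,)$ intensities governing the $\SLE_\kappa(\kappa-6)$ bubble excursion measures, and as produced directly by the growth-fragmentation analysis of this paper, the prefactors for weights $\rho_1$ and $\rho_2$ are proportional to $\sin(\pi(\rho_1+2)/2)$ and $\sin(\pi(\rho_2+2)/2)$. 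Equating the two ratios gives
\[ \frac{p}{1-p} \; = \; \frac{\sin(\pi(\rho+2)/2)}{\sin(\pi((\kappa-6-\rho)+2)/2)}, \]
which rearranges to the first displayed formula of the statement. The $\arctan$ reformulation then follows by sum-to-product: $\sin A + \sin B = 2\sin((A+B)/2)\cos((A-B)/2)$ with $(A+B)/2 = \pi(\kappa-2)/4$ constant in $\rho$, and solving for $\rho+2$.

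The main obstacle, and the reason the theorem appears at the end of the paper rather than the start, is the precise identification of the prefactors $\sin(\pi(\rho_i+2)/2)$ as the correct LQG jump-intensities attached to the two sides of $\eta$. This is exactly the content of the $\CLE_{\kappa'}$-on-LQG L\'evy/growth-fragmentation description developed in the body of this paper, together with a careful matching of which $\BCLE_\kappa(\rho)$ bubbles sit on which side of $\eta$ as a function of the colouring parameter $p$. Once these identifications are in hand, the theorem reduces to the trigonometric bookkeeping above.
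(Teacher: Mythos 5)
Your proposal captures the right \emph{shape} of the argument---compare a colouring ratio with a wedge-weight/trigonometric ratio on the two sides of the interface $\eta$---and the trigonometric bookkeeping at the end is correct. But there is a genuine gap in the middle step, which is precisely where the paper has to do the hardest work.

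You assert that the left and right quantum boundary-length increment processes come with L\'evy measures whose prefactors are $\sin(\pi(\rho_1+2)/2)$ and $\sin(\pi(\rho_2+2)/2)$ \emph{with a common normalization}, and that this follows ``by analogy'' with bubble excursion measures or from the growth-fragmentation description. That is not how these factors are produced, and as stated the argument circularly assumes what it wants to prove. What the Poissonian/L\'evy analysis actually gives directly (Theorem~\ref{thm3}\eqref{it:jump_ratio}) are the two \emph{ratios} $U_L = A_{+,L}/A_{-,L}$ and $U_R = A_{+,R}/A_{-,R}$ of upward to downward jump intensities, obtained from the Bertoin running-infimum lemma for stable processes (which gives $U$ in terms of the positivity parameter) combined with the observation that the running infimum of $L$ (resp.\ $R$) corresponds to the bead boundary lengths of the quantum wedge of weight $\kappa-2-\rho$ (resp.\ $\kappa-4-\rho$) cut off by $\eta$. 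This does not pin down $A_{+,L}$ and $A_{+,R}$ individually. To pass from the colouring identity $A_{+,L}/A_{+,R} = p/(1-p)$ (Theorem~\ref{thm3}\eqref{it:jumps_and_p}) to $U_L/U_R = p/(1-p)$, one needs the additional non-trivial fact that $A_{-,L}(p) = A_{-,R}(p)$. In the paper this equality is proved in Section~\ref{S51} by passing to the exploration of a \emph{generalized quantum disk} (Theorem~\ref{thm4}), writing down the jump rates of the target-dependent exploration, and invoking target-invariance of the branching exploration to force the two negative-jump rates to coincide. Your proposal never mentions this step, and without it the two $\sin$ prefactors are only known up to a side-dependent constant, which is not enough to derive the formula.

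Two smaller issues: the stable index here is $\alpha' = 4/\kappa' \in (1/2,1)$, not $\kappa'/4 \in (1,2)$, and the processes $L$ and $R$ are genuine stable L\'evy processes with both signs of jumps, not subordinators---the whole derivation of the $\sin$ formulas hinges on $L$ and $R$ having both positive and negative jumps, so that one can analyse their running infima.
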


We wrote the right-hand side in this slightly strange form in order to stress the $\rho \leftrightarrow \kappa -6- \rho$ symmetry, and so that the $\sin(\cdot)$ terms take positive values and the angles belong to $[0, \pi]$. The fact that Theorem~\ref{thm:betarhoformula} implies Theorem~\ref{thm1} is a direct consequence of Theorem 7.10 of \cite{cle_percolations}. 
¨

To relate this formula with Section~\ref{S11}, one can recall that  the dimension of the intersection of an $\SLE_\kappa (\kappa-6-\rho)$ process 
with the real axis has been shown in \cite {MW_INTERSECTIONS} to be 
$$d (\kappa, \rho) =  1 - \frac {(\kappa -2 -(\rho+2) ) ( \kappa/2 -(\rho +2) )}{\kappa}. 
$$
Hence, this will be the dimension of the intersection of clusters of $\CLE_{\kappa'}$ loops (where each $\CLE_{\kappa'}$ loop is selected with probability $p \in (0,1)$) in the upper half-plane with the real line 
where $\rho +2  \in (0, \kappa-2)$ is given by Theorem~\ref{thm:betarhoformula}. 
One can note that when $p \to 0$, then $\rho +2 \to 0$, and one gets in the limit the dimension $2 - \kappa/2 = 2 - 8/ \kappa'$ of the intersection of one boundary-touching $\CLE_{\kappa'}$ loop with the boundary i.e., of the intersection of an $\SLE_{\kappa'}$ with the boundary, as one would expect. 
Statement~\ref{statement1} is then obtained by taking $\kappa'=6$ (i.e., $\kappa=8/3$) -- the exponent $a$ being $1- d$, and the formula for the FK$_2$-Ising model is obtained for $\kappa= 3$. 
We stress again that the formulas for exponents for those boundary critical exponents for these 
divide-and-color type models (also known as fuzzy Potts models 
\cite {mv1995fuzzypotts,hagg1999fuzzypotts}) in the discrete setting depend on $p$ in a very different way than one is accustomed to (as they here typically involve the $\arctan$ function), and that we presently know of no other way to derive such formulas than the one involving the LQG ideas that we will describe in this paper.  

Together with \cite {CLE_LQG}, the results of the present paper therefore completes the proofs of the 
statements announced in Section 7.4 of \cite {cle_percolations}. 

\subsection {Poissonian structure of $\CLE_{\kappa'}$ explorations on LQG surfaces}

The previous results will be obtained by understanding the L\'evy-type structures that emerge when one explores certain $\CLE_{\kappa'}$ decorated LQG surfaces for $\kappa' \in (4,8)$.
Many aspects of the arguments will mirror those of our 
 paper for $\CLE_{\kappa}$-decorated LQG surfaces \cite{CLE_LQG} (for $\kappa \in (8/3,4)$), that we will also directly refer to for an introduction and background. Just as in \cite{CLE_LQG}, all our arguments take place in the continuum and do not build on any considerations about random decorated planar maps, but the results do mirror some of the results that appear when one studies $O(N)$-models or FK-percolation models on planar maps via enumerative techniques, such as in \cite{BBG1,ccm2017lengths,bbck,budd,curienrichier}.  The Markovian structure that we unveil in the present paper can be viewed as the continuum counterpart on the peeling algorithms and their properties for these discrete models. This suggests of course a roadmap to identify their scaling limits in terms of CLE on LQG, using topologies related to these exploration mechanisms.

 The main philosophy of our results is the following: We consider a certain LQG surface (recall that this is a randomly chosen equivalence class of domains equipped with an area measure, boundary length measure, and metric, under an equivalence relation given by simple rules when applying conformal transformations -- the choice of this LQG surface involves a parameter $\gamma \in (0,2)$), and on this LQG surface, one samples an independent $\CLE_{\kappa'}$ for $\kappa'= 16 / \gamma^2$, and we color its loops independently into red and blue with respective probabilities $p$ and $1-p$. One chooses two boundary points, and then explores the red/blue interface that runs from one point to another. In other words, one follows the interface described in the previous paragraphs that is drawn on top of the independent LQG structure. 
 Together with this interface, one also discovers the encountered $\CLE_{\kappa'}$ loops that this interface meets (see Figure \ref {f2}), and one keeps track of (some aspects of) the connectivity properties of the remaining to be discovered surface. In the particular cases where $p =1$ and $p=0$, one just moves along the boundary of the surface and discovers the boundary-touching $\CLE_{\kappa'}$ loops.

\begin{figure}[pt!]
\includegraphics[width=0.48\textwidth]{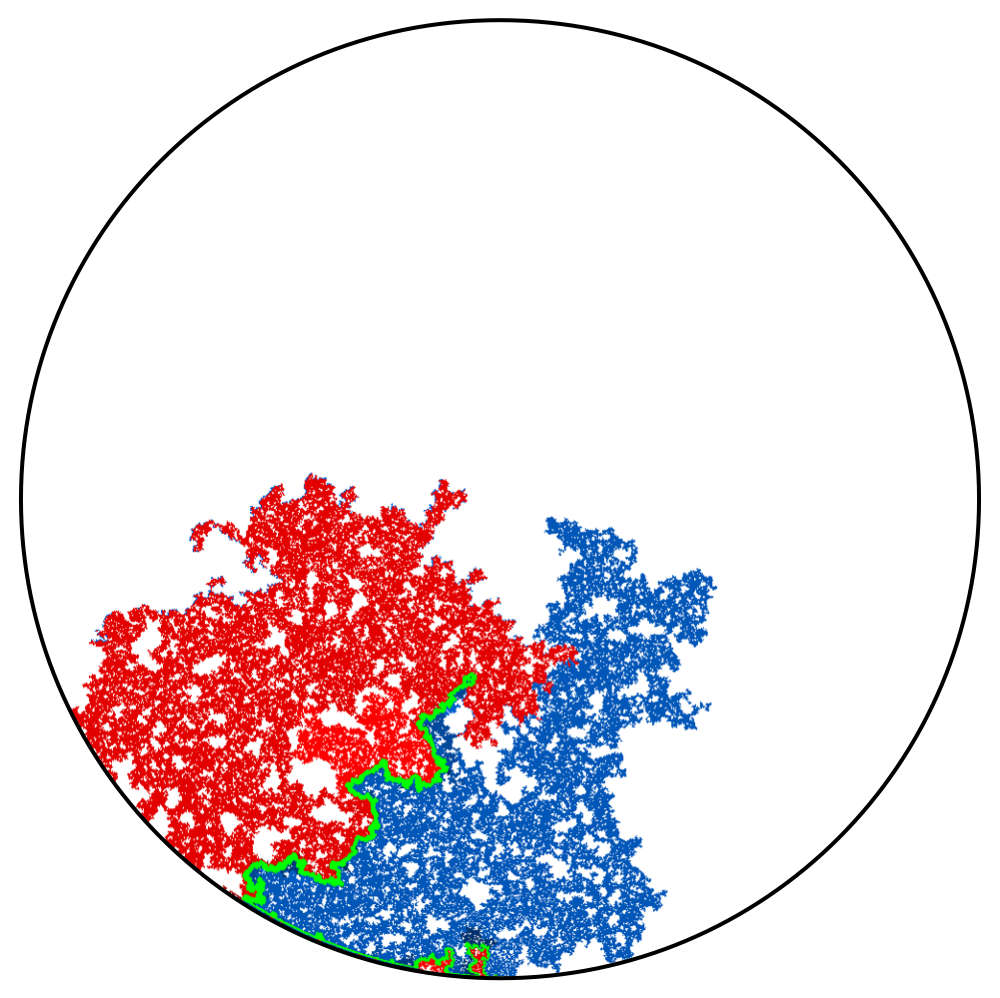}\hspace{0.02\textwidth}\includegraphics[width=0.48\textwidth]{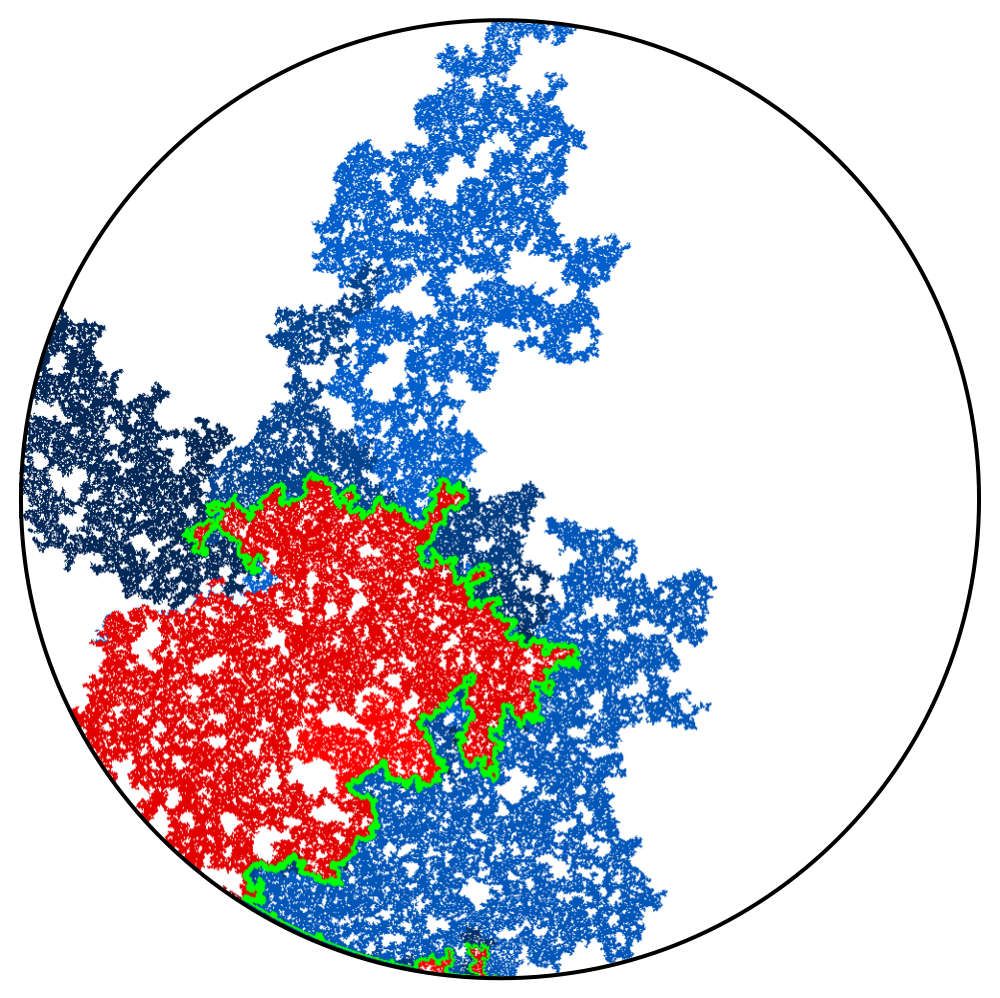}       
	\caption{The interface from the top of Figure~\ref{f1} together with loops which touch the interface at different times.  The discovered loops create an infinite chain of ``pockets'' through which the interface traverses.}
	\label{etat}
	\label{f2}
\end{figure}

\begin{figure}[pt!]
\includegraphics[width=0.48\textwidth]{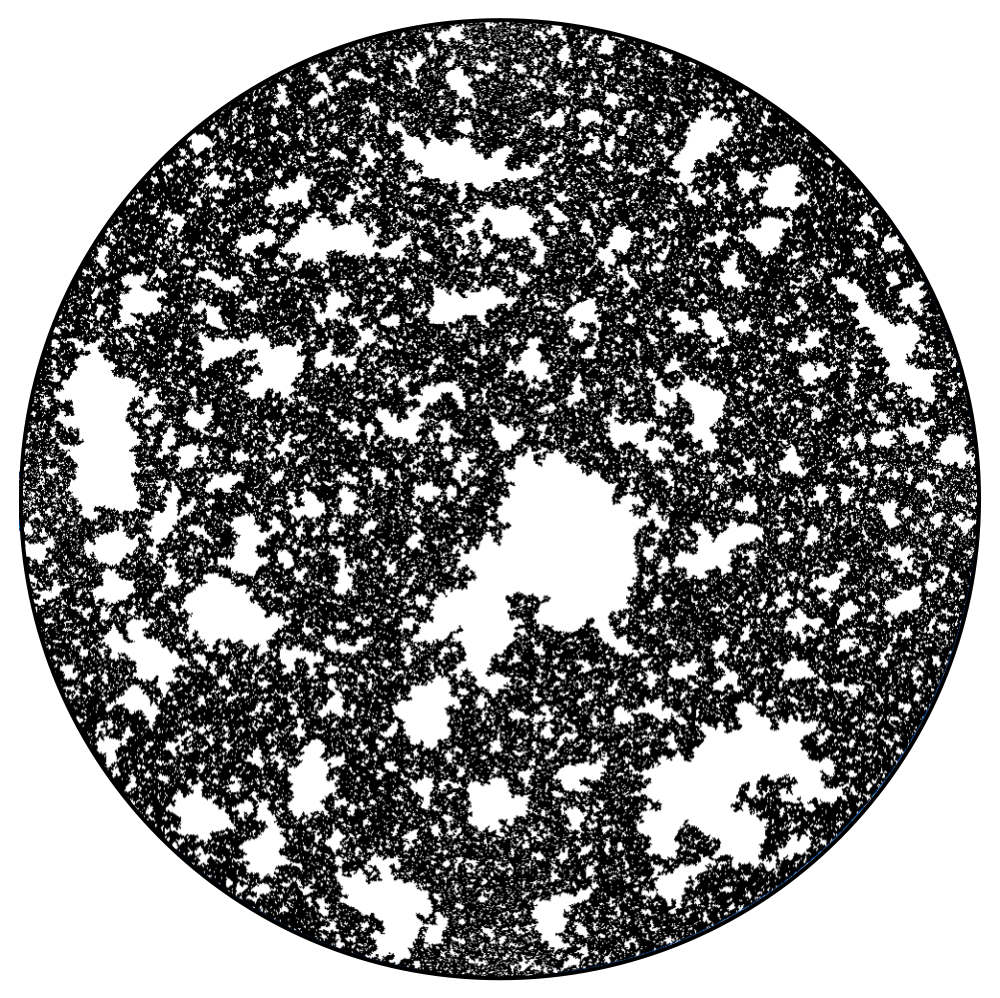}\hspace{0.02\textwidth}\includegraphics[width=0.48\textwidth]{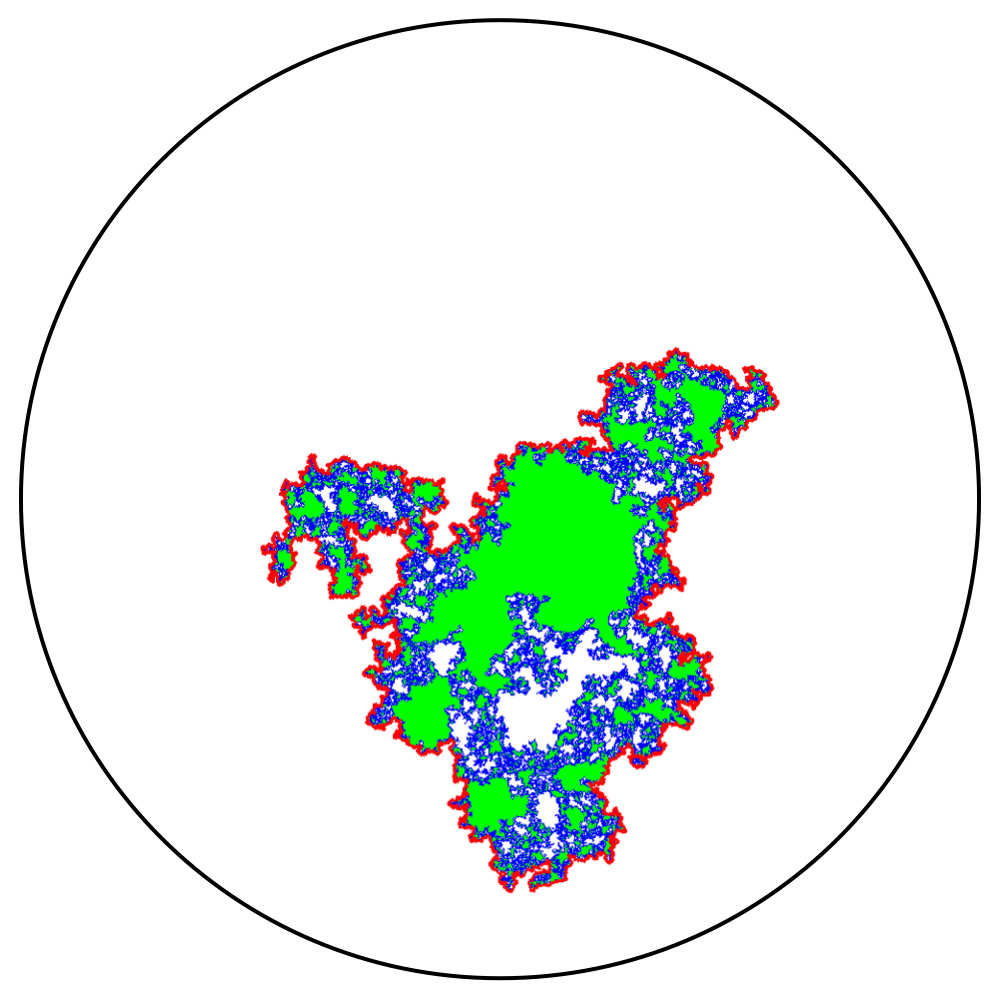}    
	\caption{{\bf Top:} $\CLE_6$ gasket.  {\bf Bottom:} Loop which contains the origin in a $\CLE_6$.  The bubbles have a natural tree structure which we will show is described by a stable looptree in the context of LQG.  Its outer boundary is an $\SLE_{8/3}$-type loop.}
	\label {f3}
\end{figure}

Then, our main statements (Theorems~\ref{thm3} and~\ref{thm4}) will be that if one starts with a well-chosen quantum surface (a so-called generalized quantum disk or a generalized quantum half-plane), this discovery process has a natural Markovian structure.  Let us illustrate this with the very special case $p=1$: Consider a so-called $\gamma$-quantum half-plane (the type of quantum surface which describes the local behavior of an LQG surface with boundary near a quantum typical boundary point) that we represent in the upper half-plane. Define  an independent $\CLE_{\kappa'}$ for $\kappa' = 16/ \gamma^2$ in the upper half-plane and consider the bi-infinite ordered family of $\CLE_{\kappa'}$ loops $(\CL_u)_{u \in U}$ that touch the boundary, ordered according to their left-most intersection point $x_u$ with the real line. Each $\CL_u$ then encircles a certain quantum surface $\CD_u$ (with a marked boundary point $x_u$). One consequence of our results will be that this bi-infinite ordered family of quantum surfaces is distributed like a Poisson point process of quantum surfaces (that we will refer to as generalized quantum disks). These Poisson point processes will be naturally related to stable processes, the properties of which will enable us to derive results such as Theorem~\ref{thm:betarhoformula}. 

It is worthwhile explaining already the differences between the results and proofs of the present paper and those of \cite{CLE_LQG}: 
\begin{itemize}
\item On the one hand, the exploration mechanisms, the L\'evy processes and the L\'evy trees that will be discussed in the present paper are in some sense simpler to understand in comparison to the ones appearing in \cite{CLE_LQG}. Roughly speaking, this corresponds to the fact that we will here be dealing with objects that are directly related to stable processes with index smaller than $1$ for which no L\'evy compensation mechanism is needed (whereas in \cite{CLE_LQG}, the index is in $(1,2)$). This is also related to the fact that one can explore a $\CLE_{\kappa'}$ by discovering all boundary touching $\CLE_{\kappa'}$ loops in the order in which one encounters them when one moves along the boundary, whereas this is not possible in the $\CLE_{\kappa}$ case. More generally, as shown in \cite{cle_percolations}, the $\CLE_{\kappa'}$ explorations are deterministic functions of the colored $\CLE_{\kappa'}$ that follow simple rules, while this is not the case for the $\CLE_{\kappa}$ explorations when $\kappa < 4$ (see \cite{msw2016notdetermined}). So, in this respect, the essence of the arguments in the present paper will be simpler than in \cite{CLE_LQG}. 
\item On the other hand, the LQG representation of these L\'evy trees is somewhat more complex. The main issue is to have a clear definition and understanding of the LQG surfaces that we are dealing with. The ones that do show up in the present paper are not simply connected domains, as opposed to the ones that appeared in \cite{CLE_LQG}. Roughly speaking, the type of quantum surfaces that appear naturally and that we will work in are the ones that correspond to the ``interior of a $\CLE_{\kappa'}$ loop''. When $\kappa \le 4$, the interior of a $\CLE_{\kappa}$ loop is a simply connected domain, whereas the interior of a $\CLE_{\kappa'}$ loop for $\kappa' \in (4,8)$ has infinitely many connected components (but we keep track of how they are connected within the loop, which corresponds to an additional tree-like structure) -- see Figure \ref {f3}. This gives rise to what we will call generalized quantum disks and generalized quantum half-planes (these objects have been referred to as forested disks or wedges in \cite{dms2014mating}). The reason for which such objects appear naturally in this setting is already clear from Figure~\ref{f2}, with its infinite chain of pockets in front of the interface. 
\end{itemize}

\subsection*{Outline}
The present paper is structured as follows:

\begin{itemize}
\item In Section~\ref{S2}, we first recall the definitions of the quantum surfaces that will be of interest in this paper (generalized disks and half-planes) and the results from the paper \cite{dms2014mating} that we will use in this paper. 
\item In Section~\ref{S3}, we study the case where one explores the 
boundary-touching $\CLE_{\kappa'}$ drawn on a quantum half-plane (this corresponds to the case $p=1$ mentioned above), and see how $(4 / \kappa')$-stable L\'evy processes and Poisson point processes of quantum disks show up naturally when one explores a generalized quantum half-plane. 
\item In Section~\ref{S4}, we study the case $p \in [0,1]$, and derive the first main result of the present paper (Theorem~\ref{thm3}), about CLE explorations of generalized quantum half-planes. 
\item Then, in Section~\ref{S5}, following ideas that were already developed in \cite{CLE_LQG}: 
\begin{enumerate}[(i)]
\item  We will explain what happens when one explores a colored $\CLE_{\kappa'}$ drawn on a generalized quantum disk (instead of a generalized quantum half-plane), deriving Theorem~\ref{thm4} (which is the counterpart of Theorem~\ref{thm3} in that case), and how this relates to a 
fragmentation L\'evy tree.
\item We complete the proof of Theorem~\ref{thm:betarhoformula}.
\item Finally, we will mention how the description in (i) allows us to define the ``natural LQG measure'' in the $\CLE_{\kappa'}$ gasket.
\end{enumerate}
\end{itemize}

\section {LQG preliminaries} 
\label {S2}

We review some features and results about LQG surfaces that we will use. We first discuss the ``usual'' quantum surfaces (disks and wedges), then the ``generalized ones'' (a.k.a. forested surfaces), and we recall some of the ``slicing/welding'' results from \cite {dms2014mating} that will be instrumental in the present paper.
In order to make this part digestible, we chose not to give the precise definitions of the various objects that will be discussed (quantum disks, quantum wedges, $\SLE_\kappa (\rho_1;\rho_2)$ processes); instead, we discuss the actual properties that will be of use in the present paper and refer to other papers for the actual definitions.

\subsection {Quantum surfaces}
Unless otherwise specified, in the remainder of the paper,  $\kappa' \in (4,8)$. The values $\kappa$, $\gamma$, $\alpha$ and $\alpha'$ are related to $\kappa'$ by: 
\begin{equation}
\label{eqn:relations}
\kappa = \frac {16}{\kappa'},  \quad \gamma = \sqrt {\kappa}, \quad \alpha = \frac {4}{\kappa}, \quad \alpha' = \frac {4}{\kappa'}. 
\end{equation}
In particular, $\alpha \in (1,2)$ and $\alpha' \in (1/2, 1)$ (and $\alpha'$ stable subordinators exist while $\alpha$-stable subordinators do not exist).  All the LQG surfaces that we will consider will be $\gamma$-LQG surfaces, i.e., they correspond to  the exponential of $\gamma h$, where $h$ is a variant of the GFF (i.e., typically, the GFF plus some harmonic function). The corresponding area and boundary length measures exist for all $\gamma \in (0,2]$ and can be rigorously defined via a regularization procedure \cite{hk1971quantum,kahane1985gmc,DS08}.  The metric has also been defined for $\gamma=\sqrt{8/3}$ in \cite{qlebm,qlebm2,qlebm3} and for all $\gamma \in (0,2)$ in \cite{dddf2019tightness,gm2019exunique}.  We also recall that an LQG surface (with or without marked points) can be viewed (and this is the perspective we will use in the present paper) as an equivalence class of domains (equipped with such an area measure, or equivalently with an instance of a variant of the Gaussian free field) under conformal maps.  That is, two domain field pairs $(D,h)$, $(\wt{D},\wt{h})$ are said to be equivalent as quantum surfaces if there exists a conformal transformation $\varphi \colon D \to \wt{D}$ such that $h = \wt{h} \circ \varphi + Q \log|\varphi'|$ where $Q = 2/\gamma+\gamma/2$.  This definition naturally generalizes to the setting in which one keeps track of extra marked points. 

Let us first recall a few facts about the usual LQG surfaces that also appeared in \cite{CLE_LQG}: 
\begin{enumerate}[(i)]
\item The first fundamental building block is the standard quantum disk (we will from now on drop the reference to $\gamma$).  Recall that (in some sense made precise in \cite{CLE_LQG}), they correspond to the quantum surfaces that are encircled by a $\CLE_\kappa$ loop in an ambient (infinite volume) GFF. These quantum disks also come equipped with a boundary length measure (that is a function of its area measure), and have a finite total boundary length $l$ as well as a finite total area $\CA$.  We will denote by $P_l$ the probability measure on quantum disks with a prescribed length $l$.  

One can obtain a quantum disk with one (resp.\ two) marked boundary point (resp.\ points) by choosing this point (resp.\ these points) uniformly (resp.\ uniformly and independently) with respect to its boundary measure.  In particular, we note that the law of a marked quantum disk is invariant under shifting the marked point by some fixed amount of boundary quantum length.

\item The so-called thin quantum wedge of weight $W_D:= \gamma^2 -2 $ can be viewed as an infinite ordered family of doubly marked quantum disks, that is defined as a Poisson point process $(C_{t_i}, a_{t_i}. b_{t_i})$ with intensity $dt \otimes (\int_{\R_+} l^{-\alpha} P_l dl )$ on $[0, \infty)$. One way to think about it is as an infinite chain of beads (each disk $C_{t_i}$ being one bead attached to the rest of the chain via their marked points). One can define the quantum boundary measure on the boundary of this wedge by adding up the quantum boundary measures of the disks, and one can note that the total boundary length of the disks $C_{t_i}$ with $t_i < t$ is almost surely finite.

\item When one looks at LQG surfaces with two marked boundary points, the more general class of surfaces that appear are the quantum wedges. 
For each $W \ge \gamma^2 /2 $, a (thick) quantum wedge of weight $W$ is an equivalence class of quadruples $(D, h, a, b)$, where $a$ and $b$ are now two boundary points 
(one of which is the apex of the wedge, and the other one is the ``point at infinity''). Again, one can define a boundary length measure on $\partial D$, which is locally finite, except in the neighborhood of ``infinity''. The weight $W=2$ plays a very special role, and we will refer to it as the quantum half-plane. This is the case where the apex is in fact a ``boundary-typical'' point: If $a'$ is obtained from $a$ by moving a fixed amount of boundary length to the right or to the left, then the new quantum surface $(D,h, a', b)$ is still a quantum half-plane. One way to explain this feature is 
that the half-plane is what one observes when one zooms into the infinitesimal neighborhood of a boundary-typical point of any type of quantum surface. 

When $W \in (0, \gamma^2 /2)$, the natural object to consider is a so-called {\em thin} wedge of weight $W$. Just as in the case where $W=W_D$ above, it is an infinite chain formed by a Poisson point process of quantum surfaces (called the beads of the thin wedge) with finite boundary length and two marked boundary points. The Poisson point process of the boundary lengths of the beads of a thin wedge of weight $W$ has intensity $dl/l^{2-2W/\gamma^2}$.
\end{enumerate}

One definition of quantum wedges uses an encoding via excursions of Bessel processes away from $0$, or equivalently, excursions away from $-\infty$ of drifted Brownian motion. The weight $W$ is then related to the dimension of the Bessel process or to the drift of the Brownian motion (the difference between thick and thin wedges corresponds then to the sign of the drift).

We are now ready to state \cite[Theorem~1.4]{dms2014mating} that we will use here. We assume here that $\gamma \in (\sqrt {2}, 2)$ and that $\kappa \in (2, 4)$. Here (and throughout this paper), an $\SLE_\kappa (\rho_1; \rho_2)$ will denote a process with two marked points immediately to the left and right sides of the starting point of the curve.
\begin{theo}
\label{thm:gluing_wedges}
Fix $W > 0$ and suppose that $\CW$ is a quantum wedge of weight $W>0$.  Let $\rho_1,\rho_2 > -2$ be such that $W = W_1 + W_2$, where $W_1 = \rho_1 + 2$ and $W_2 = \rho_2 +2$.  Let $\eta$ be an independent $\SLE_\kappa(\rho_1;\rho_2)$ process from the origin point to the infinity point in $\CW$ (if $W$ is a thin wedge, it is the concatenation of such processes in each of the beads).  Then the surfaces $\CW_1$ and $\CW_2$ which respectively correspond to the part of $\CW$ which is to the left and right of $\eta$ are independent quantum wedges with weights $W_1$ and $W_2$ (again, these can be thin wedges if $\eta$ hits the boundaries of $\CW$).  
\end{theo}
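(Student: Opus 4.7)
The plan is to build on the conformal welding / ``quantum zipper'' results of Sheffield and Duplantier--Miller--Sheffield for $\SLE_\kappa$ cutting LQG surfaces, reducing the general statement to a canonical base case and then propagating weights via additivity.

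First, I would fix a concrete embedding of the quantum wedge $\CW$ as $(\mathbf{H}, h, 0, \infty)$, where $h$ is a distribution whose restriction to the real line matches the Bessel/drifted-Brownian encoding mentioned in item~(iii) of Section~\ref{S2}; the weight $W$ is then encoded in the strength of the $\alpha$-log singularity of $h$ at the apex $0$. The $\SLE_\kappa(\rho_1;\rho_2)$ curve $\eta$ is drawn independently, with force points at $0^-$ and $0^+$ of weights $\rho_1,\rho_2$.

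The key input is the base case $\rho_1=\rho_2=0$, for which the statement (a weight-$4$ wedge cut by an ordinary $\SLE_\kappa$ yields two independent weight-$2$ wedges, i.e., two quantum half-planes) is Sheffield's conformal welding theorem. This is proved by realizing the welding of two independent quantum half-planes along their boundaries (identified by quantum boundary length) as an SLE-like process, and then characterizing its driving function via the stationarity of the quantum half-plane under boundary-length translation together with LQG scaling. From this base case one extracts the change-of-coordinates rule which governs exactly how the log singularity at the apex splits between the two pieces: the contribution of a force point of weight $\rho_i>-2$ to the apex behavior of $h$ on the corresponding side, read off via Girsanov and the standard coordinate change $h=\tilde{h}\circ\varphi+Q\log|\varphi'|$, is a log singularity of strength equivalent to weight $\rho_i+2$.

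For general $\rho_1,\rho_2$, I would combine the base case with the imaginary-geometry construction of $\SLE_\kappa(\rho_1;\rho_2)$ as a flow line of the GFF whose boundary data has jumps of the prescribed sizes at the force points, so that the welding can be implemented by first realigning the boundary data and then applying the base case; alternatively one can iterate: cut first by an ordinary $\SLE_\kappa$ using the base case, then inside one of the resulting half-planes cut by a second independent $\SLE_\kappa(\rho_1;\rho_2)$ with appropriate force points, and invoke associativity of welding to identify the joint law. Each piece is then identified as a wedge of the claimed weight by matching the log singularity at the apex; additivity $W_1+W_2=W$ is automatic because the combined singularity on both sides of $\eta$ must reproduce that of $h$. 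Finally, when $W_i<\gamma^2/2$, the curve $\eta$ bounces on $\partial\CW$; the corresponding thin side is identified as a Poisson point process of doubly marked beads with the claimed $l^{2W_i/\gamma^2 - 2}\,dl$ intensity by applying the strong Markov property of $\eta$ at its boundary-hitting times, combined with scaling invariance and quantum-length conservation of each excursion.

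The main obstacle is pinning down the log-singularity strength at the apex on each side: this is what determines the weights, and it requires either a careful Girsanov / coordinate-change computation combined with the imaginary-geometry interaction rules, or a consistency argument using the associativity of iterated cuts to bootstrap from the base case $\rho_1=\rho_2=0$. The thin-wedge case brings the additional subtlety that the welding is no longer along a single curve but along a concatenation of boundary-touching excursions, so the Poisson structure has to be extracted from the excursion theory of the driving Bessel process rather than from a direct welding identity.
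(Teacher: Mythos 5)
You should first note that the paper does not prove this statement at all: it is imported verbatim as \cite[Theorem~1.4]{dms2014mating}, and the paper only says ``we are now ready to state \cite[Theorem~1.4]{dms2014mating} that we will use here.'' There is therefore no internal proof to compare against, and your sketch should be read as an outline of the Duplantier--Miller--Sheffield argument. At that level it is broadly aligned with the actual route: one does start from Sheffield's quantum zipper (the base case $\rho_1=\rho_2=0$, where a weight-$4$ wedge cut by an independent $\SLE_\kappa$ yields two independent weight-$2$ wedges, i.e.\ \cite{SHE_WELD}), one does need the change-of-coordinates rule $h = \wt h\circ\varphi + Q\log|\varphi'|$ together with the Bessel/drifted-Brownian encoding to read off the weight from the apex log-singularity, and the thin regime $W_i<\gamma^2/2$ is indeed handled through the Poissonian structure of excursions of the encoding Bessel process below the critical dimension.

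However, the two routes you offer for upgrading the base case to general $(\rho_1,\rho_2)$ would not close the argument as written, and this is where the real work lies. Realizing $\SLE_\kappa(\rho_1;\rho_2)$ as a GFF flow line controls the law of the curve, but says nothing by itself about how the \emph{independent} LQG field splits across it; one still needs a Girsanov/local absolute continuity computation comparing the law of the field restricted to either side of $\eta$ against the law of a wedge of the claimed weight, and this computation (carried out in the reverse-coupling/zipper framework in \cite{dms2014mating}) is the technical core, not a corollary of imaginary geometry. The iteration idea is also problematic as stated: cutting first by an ordinary $\SLE_\kappa$ and then by a second curve inside one half does not reproduce an $\SLE_\kappa(\rho_1;\rho_2)$ in the original wedge, and since the zipper base case only produces weight-$2$ sides, concatenating such welds cannot reach arbitrary $W_1,W_2>0$ without an intermediate scaling or limiting argument that you have not supplied. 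You correctly flag ``pinning down the log-singularity strength on each side'' as the crux, but the proposal defers it rather than resolving it; as a blind reconstruction of the DMS proof it captures the skeleton but leaves the load-bearing absolute-continuity step open.
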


In some sense, to understand the arguments in the present paper, this ``additivity/divisibility'' property is the only feature that one needs to have in mind (together with the scaling property of boundary lengths of beads mentioned just above). An additional fact proved in \cite {dms2014mating} but that we will not use here, is that $\eta$ and $\CW$ are almost surely determined by $\CW_1$ and $\CW_2$ (this means that ``welding two wedges of weight $W_1$ and $W_2$ provides a wedge of weight $W_1+W_2$''). 

One simple instance of the theorem is when $W =4$ and $\eta$ is a $\SLE_\kappa$ (i.e., $\rho_1 = \rho_2 = 0$). Then, $\eta$ divides $\CW$ into two independent quantum half-planes (i.e., quantum wedges of weight $2$). Actually, it is known (this is the original quantum zipper result from \cite {SHE_WELD}) that if $\eta$ is parameterized according to its quantum length, then for each $t$, the domain $(\CW \setminus \eta [0,t], h, \eta(t), \infty)$ is wedge of weight $4$.

\subsection {Forested wedges, generalized quantum disks and half-planes}

Let us now provide some background on the {\em generalized quantum disks} and their variants, which are referred to as forested wedges in \cite{dms2014mating}. A major role will be played in the present paper by these generalized quantum disks that (as we will actually show) can in some sense be viewed as the surface that is ``inside'' of an $\SLE_{\kappa'}$ loop in an ambient LQG surface. 

\begin {rema} 

We will use the following terminology to clearly make the difference between the LQG structures that have the topology of the disk (or the sphere), and the ones with bottlenecks: For a given $\gamma \in (0, 2)$, the natural quantum length of an $\SLE_{\kappa'}$-type (non-simple) curve drawn on a $\gamma$-LQG surface will be referred to as its {\em generalized} LQG length. Similarly, the LQG surfaces with special symmetries that we will define in this section and that correspond to surfaces with $\SLE_{\kappa'}$ outer perimeter, will be called {\em generalized} quantum disks and half-planes. These LQG surfaces will then have a {\em generalized quantum boundary length}.   
These surfaces with bottlenecks have appeared in the literature under various names (forested surfaces,  pinched surfaces, beaded surfaces, surfaces with baby universe, touching random surfaces, KPZ with the other gravitational dressing, etc., see for instance \cite {JM,klebanov}). 

\end {rema}

 Recall that the boundaries of the bounded connected components of the complement of an $\SLE_{\kappa'}$ loop are  
$\SLE_{\kappa}$-type loops, so that if we view the loop as drawn on a $\gamma$-LQG surface, each one of these bounded components $O_j$ will be (similar to) a standard quantum disk. 
The generalized quantum disk loosely speaking corresponds to the collection of all these quantum surfaces $O_j$ together with the knowledge of how they are ``connected'' within the $\SLE_{\kappa'}$ loop (this connectivity equips naturally the family of these connected components with a tree structure).  

To properly define these generalized disks, one first defines the measure that determines the tree structure: This is the measure on $\alpha$-stable looptrees  defined in \cite{curienkortchemski1} (we refer to this paper for a detailed description of these structures).
The looptree is defined out of the excursion of an $\alpha$-stable L\'evy process with no negative jumps (which is defined under an infinite measure). The idea is then to associate to this excursion the 
usual tree structure as introduced by Le Gall and Le Jan \cite{LGLJ}, except that the nodes of the tree (which in the usual stable tree correspond to the jumps of the L\'evy excursion) will be given a circular structure with length given by the jump size. More precisely, if $X \colon [0,T] \to \R_+$ is the excursion, one defines an equivalence relation on the graph $\{(t,X(t)) : t \in [0,T]\}$ of $X$ by saying that $s \sim t$ if and only if $X(s) = X(t)$ and the horizontal chord connecting $(s,X(s))$ and $(t,X(t))$ lies below the graph of $X|_{[s,t]}$.  If $t$ is a jump time of $X$, then we also declare that $(t,X(t))$ and $(t,X(t^-))$ are equivalent (which produces the circular structure, i.e., each jump of $X$ therefore corresponds to a topological circle in the quotient $\CT$).  We note that $\CT$ is naturally rooted via the projection $\rho$ of the origin $(0,0)$. 

The way to think about it is that the looptree will encapsulate the information on the boundary lengths of the various $O_j$'s and how they are connected towards the root. 

Some features of looptrees: 
\begin{itemize}
\item When $a$ and $b$ are two points on the looptree, we have a unique chain of loops that connects them. The sum of the lengths of these loops is finite, and so is the shortest path joining these points. So, one has a natural measure on the boundary of this chain, and a distance on the looptree.  
\item There is also a natural notion of boundary length of the entire looptree.  It is very simple to see that the sum of the lengths of the loops in a looptree is infinite (as the sum of the jumps which occur in any non-empty open interval of time of an $\alpha$-stable L\'evy process with $\alpha \in (1,2)$ is infinite). However, one can make sense of the natural (and finite) measure living on the boundary of the generalized disk, for instance by taking the image of the Lebesgue measure on $[0,T]$ in the construction above (this also corresponds to the fact that  the Hausdorff dimension of the looptree with respect to the aforementioned distance is $\alpha$ as shown in \cite{curienkortchemski1}). We will refer to this measure as the generalized boundary length measure of the looptree. 
\item It turns out that the root of the looptree is a boundary-typical point, in the sense that if one resamples the root uniformly on the boundary according to this generalized boundary length, one does not change the law of the looptree (this was established by Curien-Kortchemski \cite{curienkortchemski1} as a byproduct of their discrete to continuum scaling limit result,  see also Duquesne-Le Gall \cite{dl2002levy_trees} as well as Archer \cite{Archer} for a continuum proof).
\end{itemize}

A (marked) generalized quantum disk is then obtained from an $\alpha$-stable looptree by assigning a conformal structure to each of the loops using a (standard) independent quantum disk (with $\alpha, \gamma$ matched as in~\eqref{eqn:relations}) with boundary length given by the length of the loop, and that is marked at the point connected to the root. 

A consequence of the rerooting property of the looptree is that the measure on marked generalized disks is also invariant under re-rooting.  Indeed, in terms of the quantum disk structure, the rerooting operation for the generalized disk corresponds to shifting the marked points of each disk so that they fall along the branch on the disk to the root.

\begin {rema} 
 \label {areascaling} 
If we consider a generalized quantum disk with generalized boundary length $\ell$, then its total quantum area $\CA_{\ell}$ has a finite expectation, and its law is equal to that of $\ell^{2/\alpha} \CA_1$.  If we instead consider a (usual) quantum disk with (usual) quantum boundary length $\ell$, then its total quantum area $\CA_\ell$ has finite expectation and its law is equal to that of $\ell^2 \CA_1$.
\end {rema}

 If we choose two points according to the generalized boundary length measure, then one has a forested spine decomposition with a PPP of other looptrees glued to the spine with respect to the aforementioned boundary measure on the spine.

If we replace the spine by an infinite Point process of loops with boundary lengths intensity $dl / l^{\alpha}$ on $\R_+$ and then associate with each loop the conformal structure given by that of a quantum disk then one obtains a thin quantum wedge $W_D$.  If we then add a PPP of generalized quantum disks on the left and right sides of the boundary, then we get the structure that we will call {\em the generalized quantum half-plane}.

One useful way to think about the generalized half-plane (and that can be made precise) is that it is the structure that one obtains when zooming in the neighborhood of a boundary-typical point of a generalized disk, chosen according to the generalized boundary length measure. 
Again, the generalized boundary length of a quantum half-plane is locally finite.  Moreover, it follows from the root invariance of the generalized quantum disk that the generalized quantum half-plane is invariant under the operation of shifting the root by a fixed amount of generalized boundary length.

\begin {rema} 
The natural infinite measure on quantum disks (that corresponds to the jump measure of the stable subordinator) is $l^{-\alpha-1} P_l dl$, where $P_l$ is the probability measure on disks with boundary length $l$. The reason why in the spine decompositions of generalized half-planes, the measure $dl l^{-\alpha} P_l$ shows up instead (see for instance already in the definition of the quantum wedge of weight $W_D$ in the previous section) can be interpreted by the fact that being on the spine provides a size-biased PPP with an additional factor proportional to the boundary length. 
\end {rema}

The operation of gluing an independent Poisson point process of generalized quantum disks on the boundary of a (usual) quantum surface is referred to as {\em foresting} in \cite {dms2014mating}. In particular, it is possible to forest quantum wedges of other weights than $W_D$.

\begin {rema} 
 \label {scalingremark} 
Suppose that one considers a usual quantum surface of boundary length $l$, and that one attaches to its boundary a Poisson point processes of generalized disks. Then, the sum of the generalized boundary lengths of these disks will be the value of an $\alpha'$-stable subordinator at time $l$, which has the law of $l^{1/\alpha'}$ times the value of this subordinator at time $1$. 

In particular, this implies that if we consider a thin wedge of weight $W$, and then take its forested version and denote by $\ell_{t_i}$ the total generalized boundary lengths of the generalized disks attached to each bead $B_{t_i}$, then $(\ell_{t_i})$ is a Poisson point process of intensity $d\ell / \ell^{1 + (1- 2W/ \gamma^2)\alpha'} = d\ell / \ell^{1+ \alpha'- W/2}$. For instance, for $W = \gamma^2 -2$, we get $d\ell/ \ell^{2-\alpha'}$.
\end {rema}

The counterpart of Theorem~\ref{thm:gluing_wedges} for forested wedges uses $\SLE_{\kappa'} (\rho_1'; \rho_2')$ and can be stated as follows: 

\begin{theo}
\label{thm:gluing_wedges'}
Let $\CW$ be a forested wedge of weight $W \ge 2 - \gamma^2 / 2$. Suppose that $W_1, W_2 \ge 0$ with 
$W_1 + W_2 + (2 - \gamma^2 / 2 ) = W$. 
We then define $\rho_1',\rho_2'$ so that $ W_i = \gamma^2-2 + \gamma^2\rho_i' /4$ for $i=1,2$, and let $\eta'$ be an independent $\SLE_{\kappa'}(\rho_1';\rho_2')$ process from the point at infinity to the origin of the wedge (when the wedge is thin, then it is a concatenation of such processes -- one in each bead of the spine of $\CW$).  Then the generalized quantum surfaces $\CW_1$ and $\CW_2$ which consists of the components of $\h \setminus \eta'$ which are to the left (resp.\ right) of $\eta'$ (when viewed from the origin) are independent forested quantum wedges of weight $W_1$ and $W_2$.
\end{theo}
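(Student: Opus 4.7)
The plan is to reduce this forested analogue to the standard wedge-gluing Theorem \ref{thm:gluing_wedges} by combining three ingredients: the spine-and-forest description of forested wedges from \cite{dms2014mating}, the imaginary-geometry interpretation of $\eta'$ as a non-simple curve with an $\SLE_\kappa$-type trunk on the underlying spine, and a Poisson point process bookkeeping for the generalized disks that appear on each side of $\eta'$.

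The first step is to recall from \cite{dms2014mating} the spine-and-forest description of a forested wedge: $\CW$ decomposes into an unforested (possibly thin) quantum wedge --- its \emph{spine} --- together with an independent Poisson point process of generalized quantum disks attached to each side of the boundary of the spine, whose intensity is governed by Remark \ref{scalingremark}. The quantity ``$2 - \gamma^2/2$'' appearing in the hypothesis of the theorem is exactly the amount of weight consumed by the $\SLE_{\kappa'}$ curve $\eta'$ itself, consistently with the fact that in the $\SLE_\kappa$ gluing of Theorem \ref{thm:gluing_wedges} the $\SLE_\kappa$ curve consumes no weight.

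The second step is to identify the trunk of $\eta' = \SLE_{\kappa'}(\rho_1'; \rho_2')$ as an $\SLE_\kappa(\rho_1;\rho_2)$ curve living on the spine wedge, with matching parameters $\rho_i + 2 = W_i$ dictated by the formula $W_i = \gamma^2 - 2 + \gamma^2 \rho_i'/4$ together with SLE duality from the imaginary-geometry coupling of \cite{MS_IMAG}. The outer boundaries of $\eta'$ are $\SLE_\kappa$-type flow lines of a Gaussian free field, and whenever $\eta'$ enters a generalized disk attached to the spine, or a bead of a thin spine, it traces an independent $\SLE_{\kappa'}$-type excursion inside that disk and cuts off chains of bubbles on both sides. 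The third step is then to apply Theorem \ref{thm:gluing_wedges} to the spine cut by this trunk, which yields two independent standard quantum wedges; each side $\CW_i$ then picks up both (a) those disks of the original forest that happened to lie on its side and (b) the new generalized disks cut off by the $\SLE_{\kappa'}$ excursions. By Poisson superposition together with the $\alpha'$-stable scaling of Remark \ref{scalingremark}, the combined intensity of these disks is exactly the foresting intensity corresponding to a spine of the right weight, so that $\CW_i$ is a forested wedge of weight $W_i$, and the two sides are independent thanks to the independence statement in Theorem \ref{thm:gluing_wedges} together with the Poisson marking of the forest disks.

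The main obstacle is the combination of the second and third steps: identifying the trunk of $\eta'$ as an $\SLE_\kappa(\rho_1;\rho_2)$ requires a careful analysis in the imaginary-geometry framework of how $\eta'$ interacts with the attached forest disks and with the beads of a thin spine, and then verifying that the Poisson intensity of the bubbles generated by $\eta'$ on each side matches exactly the foresting intensity for weight $W_i$ requires the $\alpha'$-stable subordinator bookkeeping of Remark \ref{scalingremark}. Once these are in place, the conclusion reduces to an application of the already-proven standard gluing theorem.
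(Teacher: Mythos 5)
Your overall strategy --- reduce to Theorem~\ref{thm:gluing_wedges} and keep Poisson bookkeeping of generalized disks --- is the right philosophy, and you correctly observe that the quantity $2-\gamma^2/2$ is the weight ``consumed'' by the non-simple curve $\eta'$. However, the mechanism you propose for making this work, namely cutting the unforested spine of $\CW$ along a single ``trunk'' $\SLE_\kappa(\rho_1;\rho_2)$ with $\rho_i+2 = W_i$, has a fundamental weight mismatch and is not well-posed.

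Concretely: the unforested spine of $\CW$ has weight $W$, and Theorem~\ref{thm:gluing_wedges} applied to a single curve of the form $\SLE_\kappa(\rho_1;\rho_2)$ would split it into two wedges of weights $\rho_1+2$ and $\rho_2+2$ \emph{summing to $W$}. Your choice $\rho_i + 2 = W_i$ would therefore require $W_1+W_2 = W$, whereas in fact $W_1 + W_2 = W - (2 - \gamma^2/2) < W$. There is no single $\SLE_\kappa$-type curve that effects the split you describe --- and indeed for a plain (uncolored) $\SLE_{\kappa'}(\rho_1';\rho_2')$ process there is no canonical ``trunk'' in the sense of the CLE-percolation interfaces; what exists canonically is a pair of outer boundary curves. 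The paper's proof makes exactly that structure precise via Lemma~\ref{lem:sle_outer_boundary}: the left boundary $\eta_L$ and right boundary $\eta_R$ of $\eta'$ cut the unforested weight-$W$ spine into \emph{three} independent wedges of weights $W_1$, $2-\gamma^2/2$, and $W_2$ (two successive applications of Theorem~\ref{thm:gluing_wedges}, first to $\eta_L$, then to $\eta_R$ in the right piece). The middle thin wedge of weight $2-\gamma^2/2$ is then filled by $\eta'$, which in each bead is an $\SLE_{\kappa'}(\kappa'/2-4;\kappa'/2-4)$, and the separate input Theorem~\ref{thm:gluing_forested_lines} (\cite[Theorem~1.15]{dms2014mating}) says precisely that the bubbles $\eta'$ cuts off on its two sides inside this middle wedge form two independent Poisson point processes of generalized quantum disks. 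These are what forest the left and right wedges (together with the original forest of $\CW$), producing $\CW_1$ and $\CW_2$.

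So the missing ingredients are (a) the three-wedge decomposition via the two boundary curves $\eta_L,\eta_R$ of Lemma~\ref{lem:sle_outer_boundary}, replacing your single ``trunk,'' and (b) Theorem~\ref{thm:gluing_forested_lines}, which does the work that your appeal to ``Poisson superposition together with $\alpha'$-stable scaling'' gestures at but does not actually establish; that lemma is the precise statement about how a space-filling $\SLE_{\kappa'}$ in a thin weight $2-\gamma^2/2$ wedge produces the forested boundary structure.
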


\begin {rema} 
\label {sideremark}
In general, when one considers an $\SLE_\kappa (\rho_1;\rho_2)$ process, the convention is that $\rho_1$ and $\rho_2$ respectively correspond to the intensity of the drift due to the marked points that are to the left and to the right of the tip of the curve (viewed from this tip). However, in the special case where we are looking at an $\SLE_{\kappa'}(\rho_1';\rho_2')$ started at the point at infinity  (like in Theorem \ref {thm:gluing_wedges'}), we will use the convention that $\rho_1'$ corresponds to the force point located ``to the right'' of the curve when viewed from the tip of the curve, so that this becomes the left when viewed from the target point. For instance, when $\rho_1'$ gets very close to $-2$, this process will tend to come down from infinity along the {\em negative} real axis.

At various instances in the present paper, we will similarly use the following terminology for left and right boundaries. In the imaginary geometry context, when one is looking at an $\SLE_{\kappa'}$-type curve $\eta'$ from $a$ to $b$, then it is natural to describe its outer boundaries as $\SLE_\kappa$-type curves from $b$ to $a$. We will refer to the left and right boundaries of $\eta'$ as the curves that lie to its left and to its right respectively, {\em when viewed from $b$ to $a$}.

All these conventions are for instance already used in earlier work in the imaginary geometry framework -- see for instance \cite[Figure~2.5]{MW_INTERSECTIONS}, that also illustrates why such a ``side-switching convention'' is useful.
\end {rema}

Again, it is in fact possible to reconstruct $\eta'$ and $\CW$ from $\CW_1$ and $\CW_2$. 

A special case of Theorem~\ref{thm:gluing_wedges'} is when $W=2$ and $\eta'$ is an $\SLE_{\kappa'}(\kappa' -6)$. In that case, the two surfaces $\CW_1$ and $\CW_2$ are forested wedges with weights $2-\gamma^2/2$ and $\gamma^2-2$ -- in particular, $\CW_2$ is a generalized quantum half-plane.

One way interpret and actually derive Theorem~\ref{thm:gluing_wedges'} is to view it as a decomposition of the (non-forested) wedge of weight $W$ into three non-forested independent wedges of weight $W_1$, $2- \gamma^2 / 2$ and $W_2$ that are separated by the left-boundary $\eta_L$ and by the right-boundary $\eta_R$ of $\eta'$ (in the sense explained in Remark \ref {sideremark}). 

Indeed, the imaginary geometry coupling interpretation of the curves $\eta'$, $\eta_R$ and $\eta_L$ viewed as flow lines/counterflow lines of an auxiliary GFF show that \cite[Section~7]{MS_IMAG}
\begin{lemm}
\label{lem:sle_outer_boundary}
Suppose that $\eta'$ is an $\SLE_{\kappa'} (\rho_1'; \rho_2')$ in the upper half-plane from $\infty$ to $0$.  Then: 
\begin{enumerate}[(i)]
\item The law of its left boundary $\eta_L$ is that of an $\SLE_\kappa(\kappa-4+(\kappa \rho_1' /4)  ; ({\kappa}/{2}) -2 + (\kappa \rho_2' / 4))$ from $0$ to $\infty$.
 \item The conditional law of its right boundary $\eta_R$ given $\eta_L$ is that of an $\SLE_{\kappa}(-\kappa / 2; \kappa-4 + (\kappa \rho_2' /4))$ in the domain to the right of $\eta_L$. 
 \item The conditional law of $\eta'$ given $(\eta_R, \eta_L)$ is an $\SLE_{\kappa'}(\kappa'/2-4;\kappa'/2-4)$ process in the beads squeezed between $\eta_R$ and $\eta_L$.
\end{enumerate}

\end{lemm}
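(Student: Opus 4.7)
The plan is to deduce the lemma directly from the imaginary-geometry coupling of \cite{MS_IMAG}. Realize $\eta'$ as the counterflow line from $\infty$ to $0$ of a GFF $h$ on $\h$ whose piecewise-constant boundary data on $(-\infty,0)$ and $(0,\infty)$ are chosen so that this counterflow line has exactly the law of an $\SLE_{\kappa'}(\rho_1';\rho_2')$ with the force-point convention of Remark~\ref{sideremark}; the explicit heights are the ones dictated in \cite[Section~7]{MS_IMAG} in terms of $\lambda'=\pi/\sqrt{\kappa'}$, $\lambda=\pi/\sqrt{\kappa}$ and $\chi=2/\sqrt{\kappa}-\sqrt{\kappa}/2$ (and satisfy in particular $2\lambda'=2\lambda-\pi\chi$). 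In this coupling, the main theorems of \cite{MS_IMAG} on the interaction between flow lines and counterflow lines identify the two outer boundaries $\eta_L$ and $\eta_R$ of $\eta'$ as, respectively, the flow lines of $h$ from $0$ to $\infty$ with angles $+\pi/2$ and $-\pi/2$.

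For (i), the general flow-line dictionary identifies the flow line of angle $+\pi/2$ from $0$ to $\infty$ in a GFF with prescribed piecewise-constant boundary data as an $\SLE_\kappa(\underline{\rho})$, with the weights read off from the heights on $(-\infty,0)$ and $(0,\infty)$ after subtracting the angle shift $(\pi/2)\chi$. Substituting the heights that encode $(\rho_1',\rho_2')$ and simplifying using $\kappa\kappa'=16$ yields exactly the announced weights $\kappa-4+\kappa\rho_1'/4$ and $\kappa/2-2+\kappa\rho_2'/4$. For (ii), conditioning on $\eta_L$, the Markov property of the GFF together with the flow-line boundary-data rule of \cite{MS_IMAG} shows that $h$ restricted to the component of $\h\setminus\eta_L$ to the right of $\eta_L$ is again a GFF, whose boundary data are determined by the original data on $(0,\infty)$ and the flow-line heights along $\eta_L$; applying the same dictionary to the angle-$(-\pi/2)$ flow line in this domain — which is precisely $\eta_R$ — gives the stated $\SLE_\kappa(-\kappa/2; \kappa-4+\kappa\rho_2'/4)$, the weight $-\kappa/2$ being produced by the $\pi$-angle gap between $\eta_L$ and $\eta_R$ combined with the standard height jump between the two sides of a GFF flow line. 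For (iii), given both boundaries, the field restricted to each ``bead'' of $\h\setminus(\eta_L\cup\eta_R)$ is a GFF whose boundary heights along $\eta_L$ and $\eta_R$ are fully determined by the flow-line rule; running the counterflow line bead by bead and translating the heights into $\SLE_{\kappa'}(\rho_1';\rho_2')$ force-point weights produces $\rho_1'=\rho_2'=\kappa'/2-4$.

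The only genuine obstacle is bookkeeping: keeping the signs, the two angle shifts $\pm(\pi/2)\chi$, and the side-switching convention of Remark~\ref{sideremark} consistent with the fact that $\eta'$ runs from $\infty$ to $0$ rather than from $0$ to $\infty$. Once these conventions are fixed, each of (i)--(iii) is a direct application of the GFF-to-$\SLE_\kappa(\underline{\rho})$ dictionary of \cite{MS_IMAG}, with no further probabilistic input needed.
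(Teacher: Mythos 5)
Your proposal is correct and takes essentially the same route as the paper: the paper itself gives no computation, simply asserting that the three statements follow from the imaginary-geometry coupling of \cite[Section~7]{MS_IMAG}, and your argument spells out precisely that citation — realize $\eta'$ as a counterflow line, identify $\eta_L$ and $\eta_R$ as the angle-$\pm\pi/2$ flow lines, and read the $\rho$-weights off from the flow-line/counterflow-line dictionary and the GFF Markov property, with the parameter identities $\kappa\kappa'=16$ and $2\lambda'=2\lambda-\pi\chi$ doing the bookkeeping. This is the intended derivation.
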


Theorem~\ref{thm:gluing_wedges'} is then obtained by successively applying 
Theorem~\ref{thm:gluing_wedges} to $\eta_L$ in ${\mathcal W}$, then to $\eta_R$ in the quantum wedge that lies to the right of $\eta_R$, and then finally applying the following result to $\eta'$ in the middle thin wedge of  weight $2 - \gamma^2 /2$ that lies between $\eta_R$ and $\eta_L$ \cite[Theorem~1.15]{dms2014mating}: 

\begin{theo} 
\label{thm:gluing_forested_lines}
Suppose that $\CW$ is a quantum wedge of weight $2-\gamma^2/2$ and that $\eta''$ consists of a concatenation of independent $\SLE_{\kappa'}(\kappa'/2-4;\kappa'/2-4)$ processes, one for each bead of $\CW$.  Then the bubbles which are to the left (resp.\ right) of $\eta''$ (when parameterized via the quantum natural length of $\eta''$) are two independent Poisson point processes of generalized quantum disks.
\end{theo}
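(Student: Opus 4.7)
The plan is to prove this via the quantum natural parametrization of $\eta''$ together with conformal welding, reducing the structure of left/right bubbles to the jump structure of two independent $\alpha'$-stable subordinators. First I would exploit the Poisson structure of the thin wedge $\CW$: a wedge of weight $2-\gamma^2/2$ is an ordered Poisson point process of doubly-marked quantum disks (``beads''), and by the Markov property of concatenated $\SLE_{\kappa'}(\kappa'/2-4;\kappa'/2-4)$, it suffices to understand the bubble formation inside one bead and then reassemble across beads using the fact that the bead intensities and the SLE quantum natural length parametrization combine to yield a genuine Poisson point process indexed by $\R_+$.

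Within a single bead, I would study the pair of processes $(L_t,R_t)$ tracking the quantum boundary lengths lying to the left and to the right of $\eta''$, parametrized by the quantum natural length $t$ of $\eta''$. The symmetric choice $\rho_1' = \rho_2' = \kappa'/2-4$ is essential: by scaling and the conformal welding result of Theorem~\ref{thm:gluing_wedges} (applied in the imaginary-geometry framework of Lemma~\ref{lem:sle_outer_boundary}), the two coordinates are exchangeable, and each can be identified with an $\alpha'$-stable process with no negative jumps. Each downward jump of $L_t$ records the quantum generalized boundary length of a bubble pinched off on the left, and analogously for $R_t$. The crucial point is that left jumps and right jumps are disjoint (a single SLE swallowing event pinches off on exactly one side), and the independence of the two jump sequences follows from the exchangeability together with the absence of any correlated drift; concretely, the two-dimensional jump measure decomposes as a sum of two singular measures supported respectively on $\{\Delta L>0,\Delta R=0\}$ and $\{\Delta L=0,\Delta R>0\}$, each of which is the jump measure of an $\alpha'$-stable subordinator $d\ell/\ell^{1+\alpha'}$.

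To identify each pinched-off bubble itself as a generalized quantum disk of the prescribed boundary length, I would apply the target-invariance/locality of $\SLE_{\kappa'}$: conditionally on its generalized boundary length, a bubble is surrounded by an $\SLE_{\kappa'}$-type loop inside an LQG surface, whose interior carries precisely the forested structure that defines the generalized quantum disk (via the $\alpha$-stable looptree construction). Combined with the $\alpha'$-stable subordinator jump measure found above, this exactly matches the PPP of generalized quantum disks with intensity of generalized boundary lengths equal to $d\ell/\ell^{1+\alpha'}$, giving the desired description on each bead, and then on the whole wedge after concatenation across beads.

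The main obstacle I anticipate is the rigorous handling of two points. First, one must show that the quantum natural length parametrization is well-defined and additive across the bottlenecks separating consecutive beads of $\CW$, so that the bubble process genuinely forms a single PPP on $[0,\infty)$ rather than a disjoint union of PPPs with boundary effects near bead transitions. Second, and more delicate, one must upgrade the identification ``$L$ and $R$ are two independent $\alpha'$-stable processes'' from a distributional match to a full joint identification of the marked jump processes with the left/right bubble sequences; this requires a careful use of the scaling consistency (Remark~\ref{scalingremark}) together with the fact that, once one knows the marginal laws and the independence of jumps, the mating-of-trees framework of~\cite{dms2014mating} forces the two Poisson point processes of generalized disks to be independent.
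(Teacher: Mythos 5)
This theorem is not proved in the paper: it is quoted verbatim from \cite[Theorem~1.15]{dms2014mating} and used as an input (together with Theorem~\ref{thm:gluing_wedges} and Lemma~\ref{lem:sle_outer_boundary}) to derive Theorem~\ref{thm:gluing_wedges'}. So there is no ``paper's own proof'' to compare against, and your sketch should be measured against the actual argument in \cite{dms2014mating}, which proceeds by a conformal welding/mating-of-trees construction: one \emph{builds} the weight-$(2-\gamma^2/2)$ wedge by welding two a priori independent forested lines (each of which is by definition a PPP of generalized quantum disks), and shows that the welding interface is the concatenated $\SLE_{\kappa'}(\kappa'/2-4;\kappa'/2-4)$. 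The independence of the left and right bubble PPPs is thus structural, coming from the welding construction, rather than something deduced after the fact from the law of $\eta''$.

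Against that background there are two concrete gaps in your proposal. First, the step ``the independence of the two jump sequences follows from the exchangeability together with the absence of any correlated drift'' is not valid: exchangeability of $(L,R)$ says nothing about independence (e.g.\ $(X,X)$ is exchangeable), and having two singular jump measures supported on $\{\Delta L>0,\Delta R=0\}$ and $\{\Delta L=0,\Delta R>0\}$ only tells you the two processes never jump simultaneously, not that the jump point processes are independent; this is precisely the hard content of the theorem, and it is where the welding construction is doing the work. Relatedly, your closing remark that ``the mating-of-trees framework of \cite{dms2014mating} forces the two Poisson point processes of generalized disks to be independent'' is circular: that assertion \emph{is} Theorem~\ref{thm:gluing_forested_lines}. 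Second, your description of the boundary-length processes is internally inconsistent: you say each coordinate is an $\alpha'$-stable process ``with no negative jumps'' and then immediately speak of ``each downward jump of $L_t$''. You should decide whether you are tracking remaining boundary length (a subordinator run backwards, with only negative jumps) or cumulative cut-off length (an increasing subordinator, only positive jumps); either choice is a one-sided jump process, not both. A smaller but telling issue is that you invoke Theorem~\ref{thm:gluing_wedges} and Lemma~\ref{lem:sle_outer_boundary} as ingredients; in the paper these are used \emph{alongside} Theorem~\ref{thm:gluing_forested_lines} to obtain Theorem~\ref{thm:gluing_wedges'}, so importing them here risks an implicit circularity in the logical dependencies — the middle forested piece between $\eta_L$ and $\eta_R$ is exactly the object this theorem is needed to describe.
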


\section {The case $p=1$} 
\label{S3}
For presentation purposes, we choose to first present some of the results and proofs for the totally asymmetric case where $p=1$. In this case, the interface $\eta$ follows the boundary of the domain so that determining its law is not an issue, but the identification of the stable processes requires some non-trivial input. This will allow us to explain some of the ideas that will then be used again in the general case $p \in [0,1]$. 

\subsection {The setup and the first main statement} 

\label {S3.1}

Let us consider a generalized quantum half-plane $\CH$, where $x(0)$ is its marked boundary-typical point. We let $\CW$ be the usual quantum wedge of weight $W_D= \gamma^2 -2$ consisting of a chain of quantum disks from $x(0)$ to infinity in $\CH$. As explained above, if we condition on $\CW$, the remaining surfaces in $\CH \setminus \CW$ can be viewed as a Poisson point process of generalized disks glued to the boundary of $\CW$ according to its boundary length measure.

\begin{figure}[ht!]
\includegraphics[scale=.7]{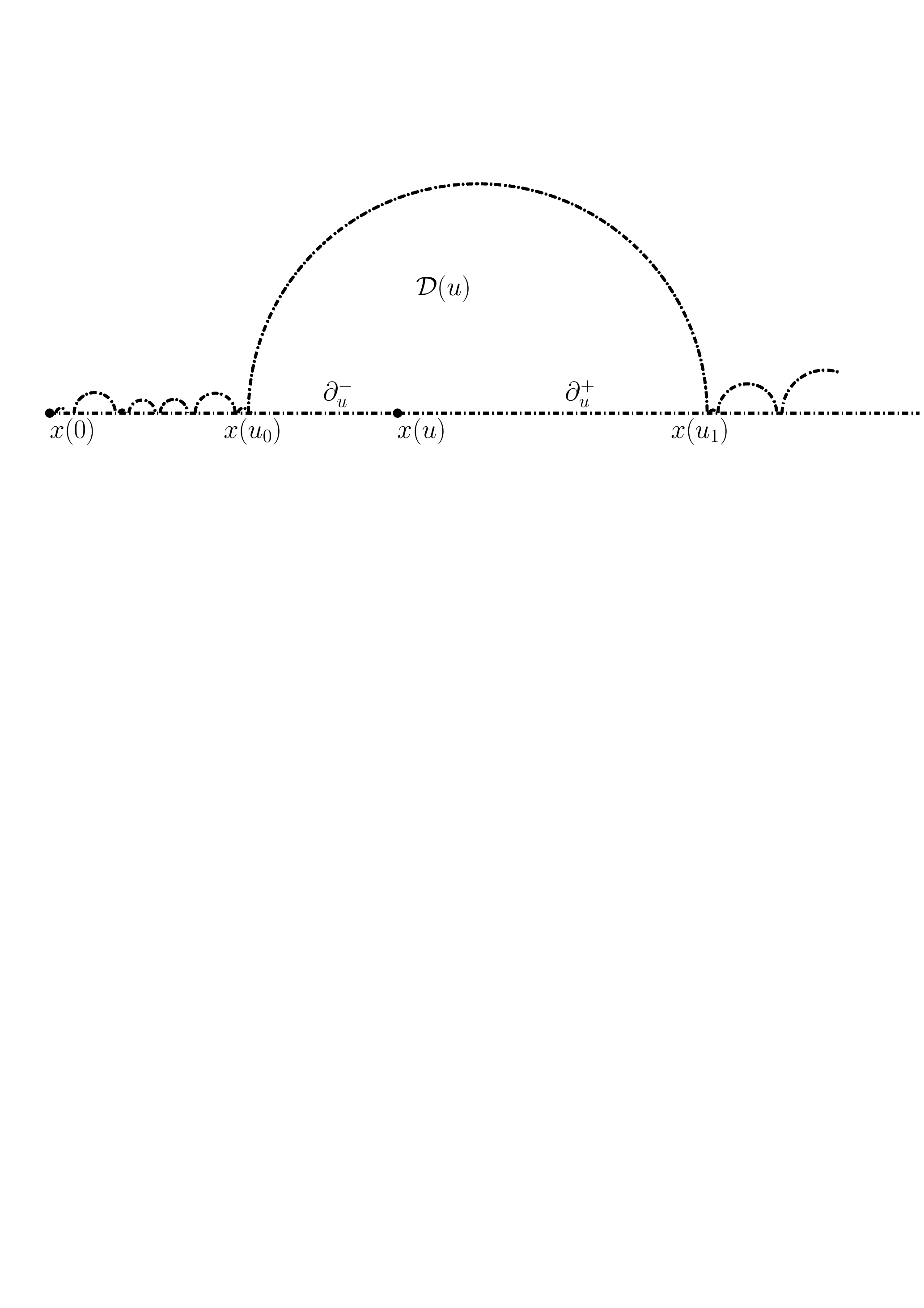}  
	\caption{A representation of the quantum wedge $\CW$ of weight $W_D$ as a chain of Euclidean half-disks -- the generalized half-plane is obtained by foresting the dash-dotted boundaries.}
	\label {f4}
\end{figure}

For each fixed $u >0$, we define $x(u)$ to be the boundary point of $\CW$ that lies at $u$ units of quantum boundary length from $x (0)$ on the counterclockwise boundary arc of $\CW$ starting at $x(0)$. We can note that for a given $u>0$, $x(u)$ will almost surely lie on the boundary of a quantum disk of $\CW$, that we denote by $\CD(u)$ (see Figure \ref {f4}).

For each given $u$, we define $u_0 = u_0(u)$ and $u_1=u_1(u)$ so that $x(u_0)$ and $x(u_1)$ are the first and last point of $\partial := \{x(v), v  \in [0, \infty) \}$ that lie on the boundary of the quantum disk $\CD(u)$.  Let $\partial_u := \{ x(v) : v \in [u_0(u),u_1(u)]\}$.  We call $\CW^+(u)$ the ordered collection of disks in $\CW$ that are ``between'' $\CD(u)$ and infinity ($\CD(u)$ not included, so this is the family of all $\CD(v)$ for all rational times $v > u_1$). It is easy to see from this definition that $\CW^+ (u)$ (with marked point at $x(u_1)$) is also a wedge of weight $W_D$.

We will denote by $\CC$ the set of times $u$ corresponding to points ``in between'' the beads. This is the fractal set obtained by removing from $\R_+$ all intervals of the type $(u_0(u), u_1(u))$ for rational times $u$.

We now sample a $\CLE_{\kappa'}$ in $\CW$, i.e., an independent $\CLE_{\kappa'}$ inside each of the disks forming $\CW$. 
We fix $u>0$, and for what will immediately follow, only the $\CLE_{\kappa'}$ loops in $\CD(u)$ will matter. We now  define $\CD_0(u)$ from $\CD(u)$ as follows: We first remove from $\CD(u)$ all the $\CLE_{\kappa'}$ loops (and their interiors) that intersect the set $\partial_u^- := \{x(v), v  \in [u_0, u ] \}$. 
In the remaining set, we look at the ordered chain of connected components whose boundary intersects $\partial_u^+:= \{x(v), v  \in [u, u_1 ] \}$ and we view this chain as a chain of quantum surfaces -- each with a pair of marked points on $\partial_u$. We denote this chain by $\CW^0(u)$. 

To illustrate what follows, we can already state the following (which is in fact a consequence of Proposition~\ref{main1} that we will state and prove below). 
\begin{prop} 
\label{firstprop}
 The concatenation $\CW(u)$ of the two chains $\CW^0 (u)$ and $\CW^+ (u)$, with marked point at $x(u)$, is also a quantum wedge of weight $W_D$. 
\end{prop}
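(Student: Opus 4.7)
The plan is to combine the Poissonian/Markov structure of $\CW$ with a CLE boundary-exploration zipper for $\SLE_{\kappa'}(\kappa'-6)$ on quantum wedges, in the spirit of Theorem~\ref{thm:gluing_wedges'}. First I would make a structural reduction: because $\CW$ is a Poisson point process of doubly marked quantum disks and the $\CLE_{\kappa'}$ is sampled independently in each disk, the sub-wedge $\CW^+(u)$ is a wedge of weight $W_D$ rooted at $x(u_1)$, jointly independent of $\CD(u)$, of the $\CLE_{\kappa'}$ restricted to $\CD(u)$, and of all beads preceding $\CD(u)$. It thus suffices to show that the chain $\CW^0(u)$ glued to an independent wedge of weight $W_D$ along $x(u_1)$ has the law of a wedge of weight $W_D$ rooted at $x(u)$. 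Equivalently, one may declare $\CV := \CD(u) \cup \CW^+(u)$ rooted at $a := x(u_0)$ to be a wedge of weight $W_D$ from the outset, and argue within $\CV$.

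Next I would reinterpret the carving-out operation in terms of an SLE. In the standard $\CLE_{\kappa'}$ coupling, the loops of $\CD(u)$ touching the upper arc from $a$ to $b := x(u_1)$ are traced by an $\SLE_{\kappa'}(\kappa'-6)$ exploration curve $\eta'$ from $a$ to $b$, whose natural time parameter in the LQG setting is the quantum length of the upper arc that has been traversed, so that $\eta'$ hits $x(u)$ at time $u - u_0$. Extended across bead gluings in $\CV$, these per-bead explorations concatenate into a global boundary-exploration curve of the $\CLE_{\kappa'}$ on $\CV$. By inspection, the ordered chain of pockets lying on the unexplored side of $\eta'|_{[0, u-u_0]}$ --- those whose boundaries meet the yet-to-be-traversed part of the upper arc --- is exactly $\CW^0(u) \cup \CW^+(u) = \CW(u)$.

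Finally, I would invoke a zipper/slicing statement to identify the law of $\CW(u)$. Applied to $\SLE_{\kappa'}(\kappa'-6) = \SLE_{\kappa'}(\kappa'-6;0)$ on the wedge $\CV$ of weight $W_D$, the conclusion of Theorem~\ref{thm:gluing_wedges'} in its appropriate thin-wedge form shows that the unexplored side at any stopping time in the quantum-length parametrization is itself a wedge of weight $W_D$ rooted at the current tip of $\eta'$. Applied at the deterministic time $u - u_0$, this yields $\CW(u)$ as a wedge of weight $W_D$ rooted at $x(u)$, which is exactly the claim. The main obstacle is to establish this zipper invariance rigorously for $\SLE_{\kappa'}(\kappa'-6)$ on a (non-forested) wedge of weight $W_D$, since Theorem~\ref{thm:gluing_wedges'} is phrased for \emph{forested} wedges with $W \ge 2-\gamma^2/2$. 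For $\kappa' \in (4,6)$ one has $W_D \ge 2-\gamma^2/2$, and a clean workaround is to first forest $\CV$ with an independent PPP of generalized quantum disks on its boundary, apply Theorem~\ref{thm:gluing_wedges'} to identify the unexplored side as a forested wedge of weight $W_D$, and then extract its spine --- which by construction coincides with $\CW(u)$. For $\kappa' \in (6,8)$ a slightly different embedding is needed, most naturally by realizing $\CV$ as the pocket side of an $\SLE_{\kappa'}(\kappa'-6)$ drawn on a quantum half-plane. This is precisely the Markovian perspective from which the more general Proposition~\ref{main1} will be derived.
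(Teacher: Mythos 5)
Your overall plan is reasonable, and the final sentence correctly identifies the paper's actual strategy, but the ``clean workaround'' for $\kappa' \in (4,6)$ is not correct, and there is a more basic issue with the way you want to use Theorem~\ref{thm:gluing_wedges'}.

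First, Theorem~\ref{thm:gluing_wedges'} is a one-shot decomposition of the forested wedge into the two sides of the \emph{entire} curve $\eta'$. It is not a zipper statement about stopping the exploration at a boundary-length time $u - u_0$ and looking at the ``unexplored'' part; that Markov/stationarity statement is precisely what Proposition~\ref{main1} is, so invoking it as an input is close to circular. Second, even as a one-shot decomposition the weight arithmetic does not work out the way you want. With $\rho_1' = \kappa'-6$ and $\rho_2' = 0$ one has $W_1 = 2 - \gamma^2/2$ and $W_2 = \gamma^2 - 2 = W_D$, and the theorem then requires the ambient forested wedge to have weight $W = W_1 + W_2 + (2 - \gamma^2/2) = 2$. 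If instead you insist on the ambient wedge having weight $W_D$ and on the right piece $\CW_2$ having weight $W_D$, you would need $W_1 = W_D - W_D - (2 - \gamma^2/2) = -(2 - \gamma^2/2) < 0$, which is not permitted (the constraint $W \ge 2 - \gamma^2/2$ concerns $W$, but the theorem also requires $W_1, W_2 \ge 0$). So foresting $\CV$ and applying Theorem~\ref{thm:gluing_wedges'} does not identify the unexplored side as a $W_D$-wedge, for \emph{any} $\kappa' \in (4,8)$; the $\kappa' \in (4,6)$ vs.\ $(6,8)$ dichotomy you introduce is beside the point. There is also a smaller issue with the ``equivalently, declare $\CV$ to be a $W_D$-wedge rooted at $a=x(u_0)$'': $\CD(u)$ is a size-biased bead (it is the bead covering the fixed boundary-length time $u$), so $\CV$ is not a $W_D$-wedge in the a priori law and this change of reference measure has to be justified rather than declared.

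The embedding you propose ``for $\kappa' \in (6,8)$'' is in fact the paper's argument and it works uniformly in $\kappa' \in (4,8)$. Concretely, the paper realizes $\CW$ and $\CH$ as the spine and the full right side of $\eta_0'$, where $\eta_0'$ is the $\SLE_{\kappa'}(\kappa'-6)$ branch from $\infty$ to $0$ of a branching exploration tree drawn on an independent weight-$2$ quantum half-plane $(\h, h, 0, \infty)$ with forested boundary; this is exactly the $W = 2$ special case of Theorem~\ref{thm:gluing_wedges'}. The crucial extra ingredient is the translation invariance of the \emph{weight-$2$} half-plane under a shift of the root by a fixed amount of quantum boundary length, which shows that the surface to the right of $\eta_{x(u)}'$ is again a generalized half-plane. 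The identification of this surface with $\CH(u)$ (and hence of its spine with $\CW(u)$) then comes from the fact that, conditionally on $\eta_0'$, the residual parts of the branches $\eta_x'$ for $x>0$ trace exactly the boundary-touching $\CLE_{\kappa'}$ loops inside the beads of $\CW$. Your proposal lacks the translation-invariance input and tries to substitute for it a zipper for $\SLE_{\kappa'}(\kappa'-6)$ on a $W_D$-wedge that is not available from the cited theorem; filling that gap requires precisely the auxiliary weight-$2$ embedding you set aside.
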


In fact, we will exhibit a generalized quantum half-plane $\CH(u)$, that will contain this wedge $\CW(u)$ as its spine-wedge. 
We are going to define $\CH(u)$ in several steps: 

(1) First, we endow the set of connected components that form $\CD_0 (u)$ with a tree structure. For this, we use the connectivity structure of the ``outside'' of the $\CLE_{\kappa'}$ loops that one has removed from $\CD(u)$ to define $\CD_0(u)$. We can then use the LQG structure of each of the connected components of $\CD_0(u)$, and view $\CD_0(u)$ as a tree of LQG surfaces. Note also that the boundary of each of these components carries a quantum length measure, so that we can in fact view it as a loop-tree of LQG surfaces. 
This tree will contain the chain of components that form $\CW_0 (u)$. On part of the boundary of $\CW_0 (u)$, the trees  created by the fjords of the CLE$_{\kappa'}$ loops are grafted. 

\begin{figure}[ht!]
\includegraphics[scale=.7]{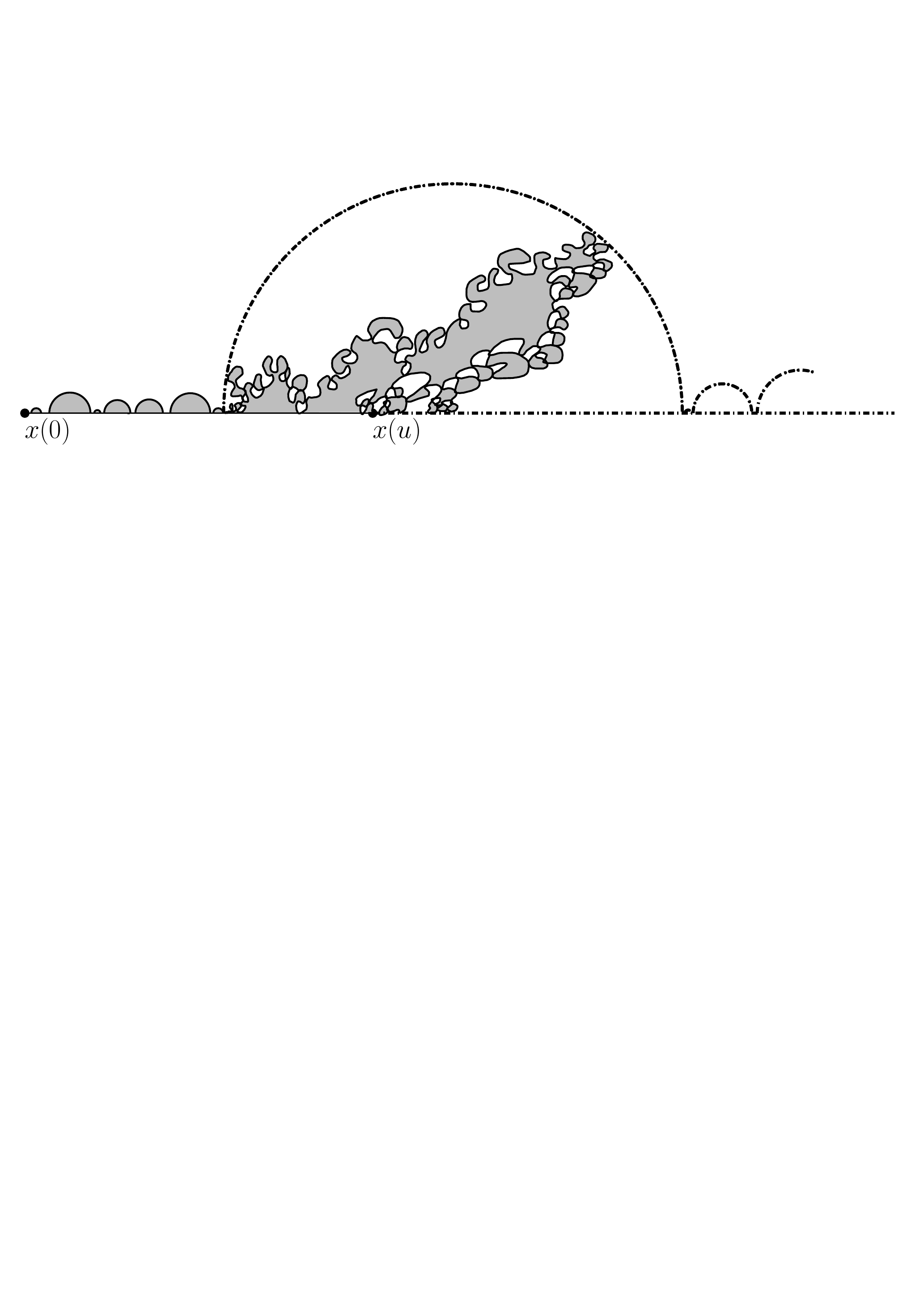}   
	\caption{A representation of situation at time $u$: The generalized half-plane ${\mathcal H}(u)$ is obtained by foresting the dash-dotted boundaries of the non-shaded region (and adding them to the non-shaded region). The wedge $\CW(u)$ is the union of the white connected components that have subintervals of  $[x(u), \infty)$ on their boundary (i.e., one cuts off the fjords of the non-shaded region).}
	\label{f5}
\end{figure}

We now define the loop-tree of quantum surfaces $\CH(u)$ that is obtained by extending $\CD_0(u)$ using the following three additional grafting/foresting operations: 

(2) On $\partial \CD(u) \setminus \partial_u$, one also adds the same generalized disks as in $\CH$.

(3) On $\partial_u^+$, one adds the same Poisson point process of generalized disks as in $\CH$. 

(4) At $x(u_1)$, one adds the generalized half-plane $\CH^+(u)$ (defined 
to be the structure obtained by adding on the wedge $\CW^+(u)$ the generalized disks from $\CH$ that are glued to it). 

In this way, one obtains a loop-tree structure of LQG surfaces, that we call $\CH(u)$.  
Given the definition, it is clear that when $u' > u$, then $\CH(u')$ is (in some natural appropriate sense) ``embedded'' in $\CH(u)$, that $\CH(u)$ can be viewed as a Markov process. Its evolution can be described  as follows: 

We note that when $u$ increases, then $\CH(u)$ will make a ``jump'' at time $v$ when one of the following three possibilities occur (see Figures \ref {f6} and \ref {f7}): 
\begin{enumerate}[(i)]
\item If $x(v)$ is a boundary point of $\CW(0)$ where a generalized disk of $\CH$ was attached (in the foresting operation that constructed $\CH$ out of  $\CW(0)$), then this generalized disk ``disappears'' from $\CH(u)$ at time $v$. 
\item When $x(v)$ is the right endpoint of some $\CD(u')$ for $u' < v$ or is a point of an already discovered $\CLE_{\kappa'}$ loop that is isolated from the left on $\R_+$ (so that $x(v)$ is the endpoint of a bead of $\CW(u'')$ for $u'' < v$), then a loop-tree of LQG surfaces disappears from $\CH(u)$ at time $v$. This tree now  lies  ``to the left'' of $x(v)$ in $\CH(v-)$. 
\item When $x(v)$ is the first encountered boundary point of a $\CLE_{\kappa'}$ loop. In that case, one removes the interior of that loop from $\CD_0(u)$. 
\end{enumerate}
We see that in all three cases, the jumps correspond to some loop-trees of LQG surfaces: The ones that disappear as in (i) and (ii), and the ones that correspond to the interior of the CLE$_{\kappa'}$ loops that one removes. They also all come marked with the boundary point $x(v)$.  
For each time $u$, we denote by $\CF_u$ the $\sigma$-algebra generated by these marked loop-tree structures of LQG surfaces (mind that we do not record how they are embedded in the plane) up to time  $u$.

\begin{figure}[ht!]
\includegraphics[scale=.7]{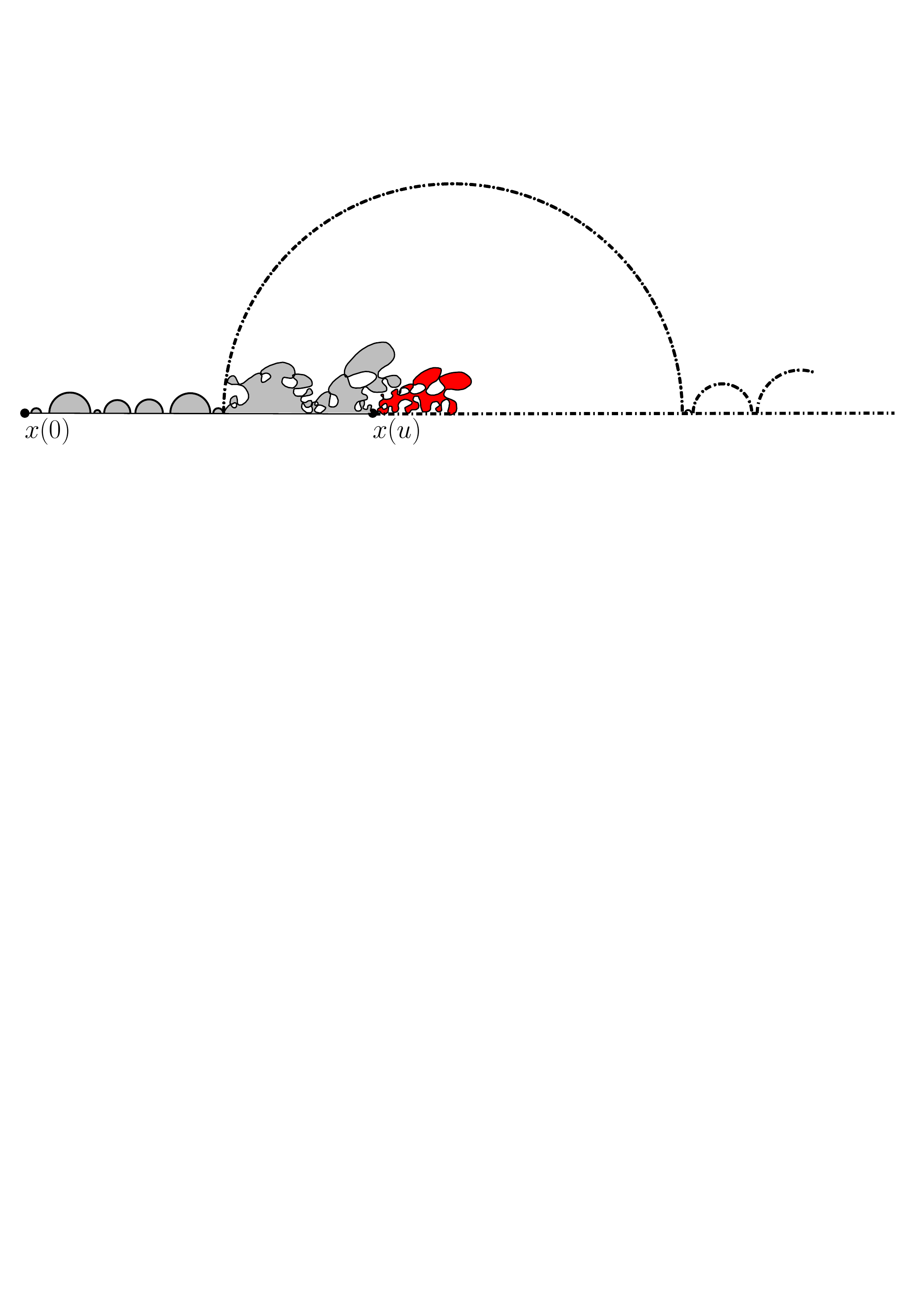}    
	\caption{Positive jumps of the boundary length: Discovering a new boundary-touching CLE loop (darker red shaded) creates an additional boundary length  (the dash-dotted boundary pieces are to-be-forested to obtain ${\mathcal H}(u)$).}
	\label{f6}
\end{figure}

\begin{figure}[ht!]
\includegraphics[scale=.7]{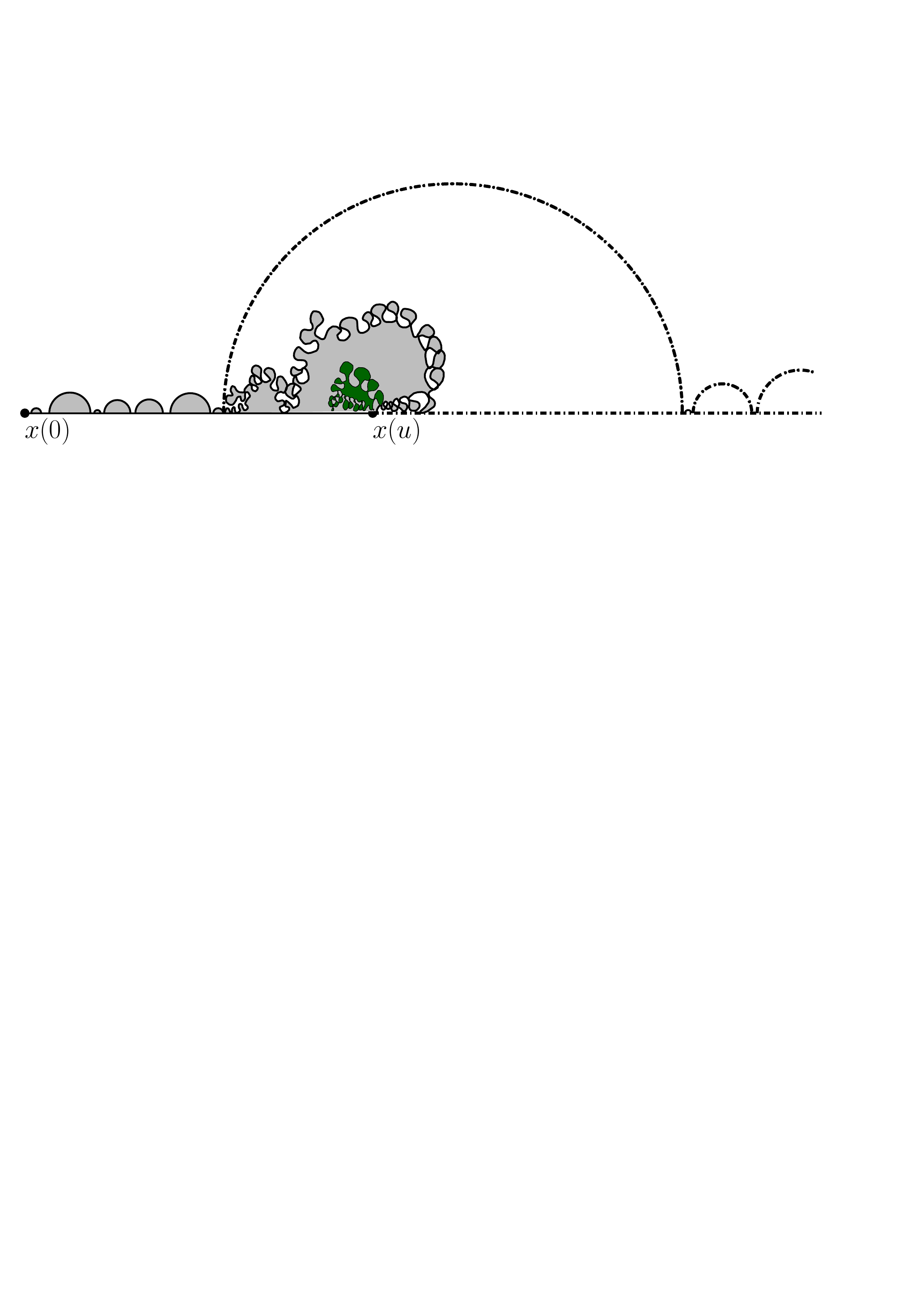}  
\includegraphics[scale=.7]{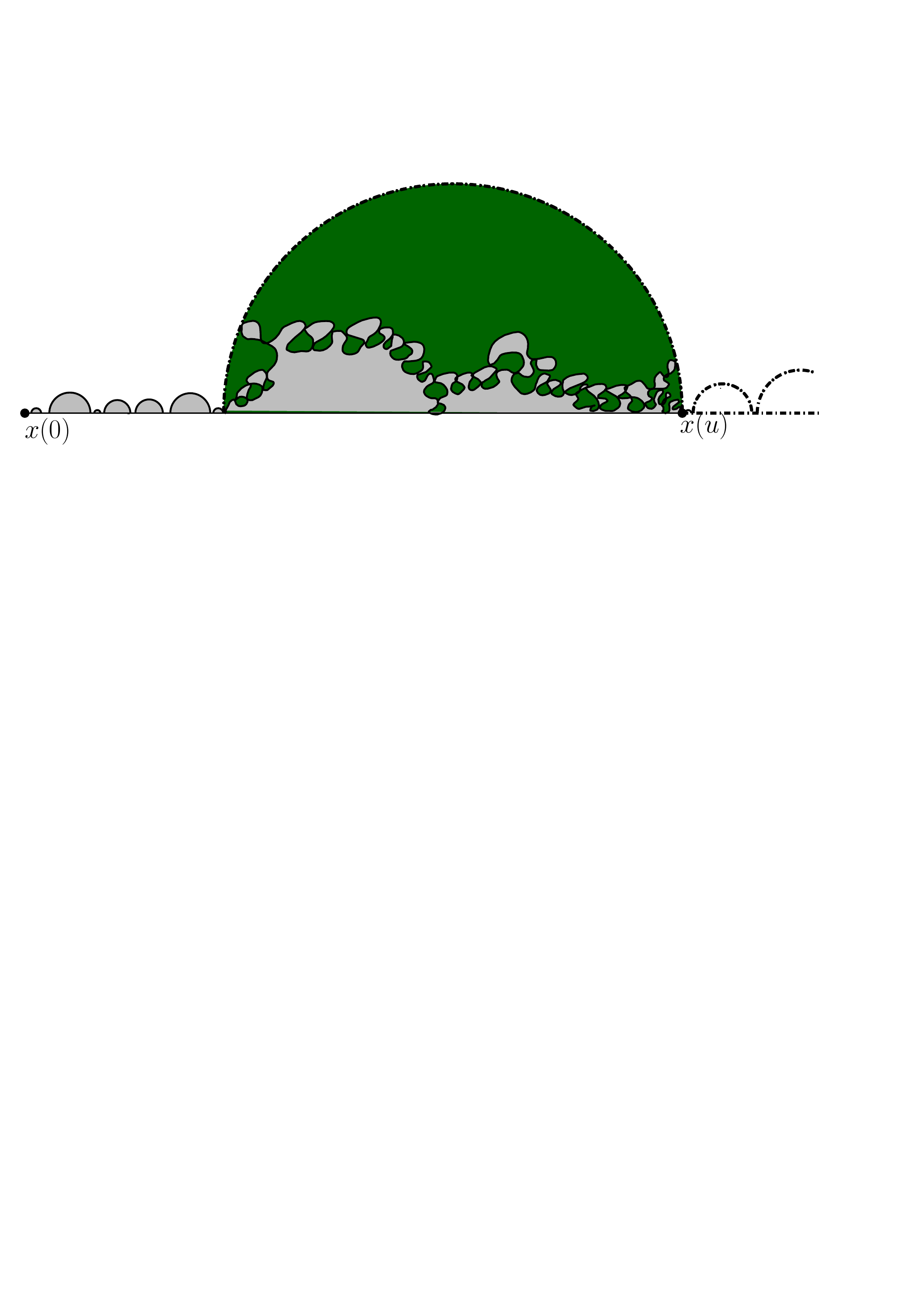} 
	\caption{Negative jumps for the boundary length. {\bf Top:} The generic case, where the darker green shaded region disappears from ${\mathcal H}(u)$, when $x(u)$ is the ``endpoint'' of a bead of $\CW (v)$ for some $v \in (0,u)$ (the dash-dotted boundary pieces are to-be-forested to obtain ${\mathcal H}(u)$).  {\bf Bottom:} The darker green shaded region disappearing from ${\mathcal H}(u)$ when $x(u)$ is the rightmost point of a bead of $\CW(0)$.}
	\label{f7}
\end{figure}

The first key proposition can now be stated as follows: 

\begin {prop} 
\label {main1}
For each $u>0$, $\CH(u)$ (with marked points $x(u)$ and $ \infty$) is a generalized quantum half-plane that is independent of $\CF_u$. 
\end {prop}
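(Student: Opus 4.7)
The strategy is to realize the process $u \mapsto \CH(u)$ as the residual surface under a peeling exploration governed by a degenerate $\SLE_{\kappa'}(\kappa'-6)$-type curve running inside each bead of the spine wedge $\CW$. The Markov property and the distributional identification then follow from the forested-wedge welding/slicing theorems (Theorems~\ref{thm:gluing_wedges'} and~\ref{thm:gluing_forested_lines}), the strong Markov property of subordinators and Poisson point processes, and the rerooting invariance of the generalized quantum half-plane (itself inherited from the rerooting invariance of the underlying $\alpha$-stable looptree and of the usual quantum wedge).

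The first step is a single-bead statement. Let $\CD$ be a doubly-marked quantum disk (a bead of $\CW$) carrying an independent $\CLE_{\kappa'}$, and parametrize its counterclockwise boundary arc between the two marked points by quantum length. The boundary-touching $\CLE_{\kappa'}$ loops, listed in the order in which their leftmost boundary intersection occurs along this arc, coincide with the outermost loops cut off by a degenerate $\SLE_{\kappa'}(\kappa'-6)$ trunk from the first to the second marked point. Applying Theorem~\ref{thm:gluing_forested_lines} (in conjunction with Theorem~\ref{thm:gluing_wedges} for a preliminary slicing by the two outer $\SLE_\kappa$-boundaries of this trunk, as in Lemma~\ref{lem:sle_outer_boundary}) identifies the collection of discovered loop-interiors (together with their forested attachments) with an $\alpha'$-stable subordinator encoding their generalized boundary lengths, and independently encodes the opposite side. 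Stopping at a deterministic amount $u-u_0$ of explored quantum boundary length is a stopping time for this subordinator, so the residual unexplored portion of $\CD$ is an independent doubly-marked quantum disk segment with an independent $\CLE_{\kappa'}$, carrying forested attachments on the relevant side.

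The second step concatenates across beads. The wedge $\CW$ is itself a Poisson point process of beads (with intensity coming from the thin-wedge construction), and $\CH$ is obtained from $\CW$ by an independent foresting PPP on each side of its boundary. The three jump types (i), (ii), (iii) listed just before the statement correspond precisely to jumps of three independent Poisson point processes, namely: (i) the foresting PPP glued to the opposite boundary side of $\CW$, (ii) the PPP of beads composing $\CW$ itself, and (iii) the per-bead $\alpha'$-stable subordinator of loop boundary lengths from the single-bead step. Since $u$ is a deterministic time, it is a stopping time for all three, and the strong Markov property of each of these PPPs, together with the rerooting invariance of the generalized quantum half-plane (which justifies restarting from the apex of the next bead at transitions of type (ii)) and of the quantum disk (for transitions of type (i)), immediately yields that $\CH(u)$ is a generalized quantum half-plane independent of $\CF_u$.

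The main obstacle is the careful matching of parameterizations: the time $u$ in the statement is the spine-boundary-length of $\CW$ starting at $x(0)$, while the welding theorems and the underlying stable processes are naturally parameterized by quantum natural lengths of the trunk and generalized quantum boundary lengths of discovered surfaces, and one must verify that the counting bijections between these parameterizations preserve stopping times. A secondary subtlety is that the weight condition $W \geq 2-\gamma^2/2$ of Theorem~\ref{thm:gluing_wedges'} may not apply to the full $\CH$ for $\kappa' > 6$; this is sidestepped by always arguing bead-by-bead, where one works with the marginal weight-$(2-\gamma^2/2)$ case of Theorem~\ref{thm:gluing_forested_lines} and can then transfer the conclusion back to $\CH$ via the independent PPP structure of $\CW$ and its foresting.
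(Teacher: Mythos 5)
Your proposal takes a genuinely different route from the paper, and as written it has several gaps that are not just bookkeeping.

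The paper's proof does not argue bead-by-bead at all. It embeds the whole construction into an ambient weight-$2$ quantum wedge $(\h,h,0,\infty)$ and uses the $\SLE_{\kappa'}(\kappa'-6)$ branching tree $\{\eta'_x\}_{x\in\R}$ from $\infty$. Then $\CH=\CH(0)$ is, by Theorem~\ref{thm:gluing_wedges'} (for a single curve, not bead-by-bead), the generalized quantum half-plane to the right of $\eta'_0$, and $\CH(u)$ is the generalized quantum half-plane to the right of $\eta'_{x(u)}$. The crucial mechanism is the invariance of the weight-$2$ wedge under translating the marked boundary point by a fixed amount of quantum length along $\R_+$: this immediately makes $(\h,h,x(u),\infty)$ a quantum half-plane of the same law, and slicing it by the same \emph{type} of curve $\eta'_{x(u)}$ gives the result in one step. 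The CLE-exploration content is encoded in the observation that $\eta'_{x(u)}\setminus\eta'_0$ traces exactly the boundary of $\CD_0(u)$, and independence from $\CF_u$ comes from the fact that the wedge on the \emph{other} side of $\eta'_{x(u)}$ is independent, and all cut-out pieces can be rebuilt inside that independent wedge. Your approach, by contrast, tries to re-derive everything bead-by-bead via subordinator stopping times and then re-assemble.

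There are three concrete gaps in your version. First, your single-bead step cites Theorems~\ref{thm:gluing_wedges} and~\ref{thm:gluing_forested_lines}, which are stated for wedges/half-planes, but you apply them directly to a finite quantum disk $\CD$; the paper never needs this because it argues in the ambient half-plane and the bead $\CD(u)$ is never treated as a stand-alone quantum disk. Making your step rigorous would require an absolute-continuity or conditioning argument that you do not supply. Second, your concatenation step invokes ``rerooting invariance of the generalized quantum half-plane'' to restart at the apex of the next bead (type-(ii) jumps), but rerooting invariance is the statement that the law is preserved under shifting the marked point by a \emph{fixed amount of generalized boundary length}; the endpoint $x(u_1)$ of a bead sits at a random generalized boundary length from $x(0)$, so this is not an application of rerooting invariance as stated. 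Third, the time shift ``$u-u_0$'' that you treat as deterministic is not: $u_0=u_0(u)$ is itself random (it depends on the PPP of beads), so the claim that it is a deterministic stopping time for the per-bead subordinator is not correct as phrased. The parameterization mismatch you flag as a ``secondary'' obstacle (spine boundary length versus generalized boundary length versus quantum natural time) is in fact load-bearing and is exactly what the ambient-half-plane argument avoids, since there the time $u$ is just the boundary length of $\R_+$ in the weight-$2$ wedge, where translation invariance is immediate.

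In short: the approach could conceivably be completed, but each of the three steps above requires a real argument. The paper's route is both shorter and avoids all three difficulties simultaneously by working with a single ambient weight-$2$ wedge and the target-invariant branching tree.
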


Let us make a few comments: 
\begin{enumerate}[(i)]
\item Proposition~\ref{main1} indeed implies Proposition~\ref{firstprop}. 
\item The  generalized disks that are glued to the boundary $[x(u), \infty)$ of $\CW(u)$  are anyway independent of the rest of the construction, so we do not really need to bother about those as they will clearly be a Poisson point process of generalized disks.   
\item Some of the generalized disks glued to the other side of the wedge $\CW(u)$ will be coming from those already present in $\CH$, some will be due to fjords created by discovered $\CLE_{\kappa'}$ loops, and some will correspond to the concatenation of a fjord-tree created by a $\CLE_{\kappa'}$ loop within $\CW(0)$ with  generalized disks already grafted to $\CW(0)$ (to form $\CH$).
\end{enumerate}

Before moving to the proof of Proposition~\ref{main1} in the next section, let us state and prove a first consequence.  Suppose that $v$ is some very large fixed constant. When $u \in [0, v]$, we can consider the {\em generalized} boundary length $l_u (x(u), x(v))$ of the counterclockwise boundary of $\CH(u)$ between $x(u)$ and $x(v)$. We can note that the fluctuation process $R_u^v:= u \mapsto l_u (x(u), x(v)) - l_0 (x(u), x(v))$ does in fact not depend on $v$, in the sense that for all $u \le v \le v'$, $R_u^v = R_u^{v'}$. This therefore defines a process $R_u$ for all $u \ge 0$. We note that this process will make a negative jump at each $u_0$ such that there is a generalized disk glued to $x(u_0)$ in $\CH$ (and in fact, we will explain in the next paragraph that $-R$ is a stable subordinator). 

Similarly, one can define the fluctuation of the generalized length $L$ of the clockwise boundary of $\CH(u)$ starting from $x(u)$. 
This process will have negative jumps (for instance at the endpoints of the beads of $\CW$) just like $R$, but it also has positive jumps (when a $\CLE_{\kappa'}$ loop is being discovered for the first time, then $L$ will have a positive jump given by the generalized boundary length of this loop).  

\begin {coro} 
The process $(-R_u)_{u \ge 0}$ is an $\alpha'$-stable subordinator, the process $(L_u)_{u \ge 0}$ is an $\alpha'$-stable process, and these two processes are independent.  
\end {coro}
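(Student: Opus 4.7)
The plan is to derive both statements from Proposition~\ref{main1} together with the intrinsic translation and scaling invariances of the generalized quantum half-plane $\CH$. First, Proposition~\ref{main1} directly implies that $(R_u, L_u)_{u \ge 0}$ has stationary independent increments: the increment over $[u, u+s]$ is a measurable function of $\CH(u)$ together with the independent $\CLE_{\kappa'}$ sampled on its spine, so by Proposition~\ref{main1} it is independent of $\CF_u$ and has the same law as $(R_s, L_s)$ (using also the invariance of the generalized quantum half-plane under shifts of its marked boundary point by a fixed amount of generalized quantum length). Since the dynamics only generate jumps, $(R, L)$ is a two-dimensional L\'evy process.

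To identify $-R$ as an $\alpha'$-stable subordinator, I would simply unpack the definition of $\CH$. The jumps of $-R$ are, by the description in the paragraph preceding the corollary, exactly the generalized boundary lengths of the Poisson point process of generalized disks used to forest the right boundary of the spine wedge $\CW$. Parameterizing that right boundary by usual quantum length $u$, the cumulative sum of those generalized boundary lengths is, by the definition of foresting recalled just before Remark~\ref{scalingremark}, the value at time $u$ of an $\alpha'$-stable subordinator.

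For $L$, the L\'evy property above gives that $L$ is a one-dimensional L\'evy process (with both negative jumps at bead endpoints of $\CW$ and positive jumps at first-encounter times of CLE loops). Combining with the scaling invariance of the generalized quantum half-plane -- if we rescale the usual quantum boundary length along the spine by $\lambda$ by shifting the underlying field by a suitable additive constant, all generalized quantum lengths rescale by $\lambda^{1/\alpha'}$ -- one obtains the self-similarity $(L_{\lambda u})_u \stackrel{d}{=} (\lambda^{1/\alpha'} L_u)_u$, which together with the L\'evy property identifies $L$ as an $\alpha'$-stable process.

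Independence of $R$ and $L$ is then a direct consequence of the product structure of the generalized quantum half-plane: the right-boundary Poisson point process of generalized disks, which fully determines $R$, is by construction independent of the remaining data (the spine wedge $\CW$, the left-boundary PPP, and the independent $\CLE_{\kappa'}$ sampled inside $\CW$), which is exactly the data determining $L$. The main subtlety to check carefully is the measurability statement for $L$ with respect to this remaining data, namely the bookkeeping of jumps at bead endpoints of $\CW$ (case (ii), Figure~\ref{f7}) and at CLE-discovery times (case (iii)): one must verify that the loop-trees disappearing at bead endpoints and the newly-exposed CLE loop boundaries involve only the spine, the left-boundary disks, and the CLE, so that the right-boundary PPP genuinely plays no role in $L$.
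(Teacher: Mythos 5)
Your proposal is correct and follows essentially the same route as the paper: Proposition~\ref{main1} gives the L\'evy property, the scaling/foresting structure of generalized quantum lengths identifies the $\alpha'$-stable index, and the independence comes from the product structure (the right-boundary Poisson point process of generalized disks determines $R$ and is independent of the data — the spine wedge, its left-boundary forest, and the CLE — that determines $L$). Your direct identification of $-R$ via the definition of foresting is a mild simplification of the paper's appeal to scaling, but the argument is the same in substance.
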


\begin {proof}
Proposition~\ref{main1} shows immediately that the processes $R$ and $L$ are both L\'evy processes, and by construction $R$ has no positive jumps. Furthermore, $R$ is a function of the Poisson point process of disks attached to $[0, \infty)$ so that it is clearly independent of $L$ (that is a function of the $\CLE_{\kappa'}$ in the weight $W_D = \gamma^2 -2 $ wedge and of the disks attached to the other side of its boundary). The scaling properties of generalized quantum length in the generalized quantum disks (recall Remark~\ref{areascaling}) then implies that $L$ and $R$ are in fact both stable processes with index $\alpha'$ (recall that $\alpha'  = 4 / \kappa' \in (1/2, 1)$).
\end {proof} 

Let us now also explain the type of arguments that then allows us to describe the relative intensities of positive and negative jumps of the stable process $L$ (we will use the same ideas in the general case $p \in [0,1]$).

\begin {prop} 
The ratio between the intensity of positive and negative jumps of $L$ is $-2 \cos (\pi  \alpha' )$.  
\end {prop}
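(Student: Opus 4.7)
The plan is to first show $L$ is an $\alpha'$-stable L\'evy process, and then compute the ratio $c_+/c_-$ of its positive and negative L\'evy densities via an explicit identification with a quantum wedge decomposition.

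By Proposition~\ref{main1}, for each $u \geq 0$, $\CH(u)$ is a fresh generalized quantum half-plane independent of $\CF_u$, so $L_{u+s} - L_u$ is independent of $\CF_u$ and distributed as $L_s$; thus $L$ is a L\'evy process. Self-similarity under a constant shift of the underlying GFF, combined with the scaling of generalized boundary length from Remark~\ref{areascaling}, gives $L_{cu} \stackrel{d}{=} c^{1/\alpha'} L_u$. Hence $L$ is $\alpha'$-stable with L\'evy measure
\begin{equation*}
\nu_L(d\ell) = c_+ \ell^{-1-\alpha'} \mathbf{1}_{\ell > 0}\,d\ell + c_- |\ell|^{-1-\alpha'} \mathbf{1}_{\ell < 0}\,d\ell,
\end{equation*}
and the statement reduces to computing $c_+/c_-$.

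To identify this ratio, I would apply Theorem~\ref{thm:gluing_wedges'} with $W = 2$ and $(\rho_1', \rho_2') = (0, \kappa'-6)$ to realize $\CH$ as the weight-$W_D$ side of an $\SLE_{\kappa'}(\kappa'-6)$ curve $\eta'$ drawn in an ambient quantum half-plane, the opposite side of $\eta'$ being a forested wedge of weight $2 - \gamma^2/2$. Via the Sheffield-Werner coupling of $\CLE_{\kappa'}$ with the full-$\SLE_{\kappa'}(\kappa'-6)$ exploration of~\cite{SHE_CLE,cle_percolations} applied to the independent $\CLE_{\kappa'}$ in $\CH$, the boundary-touching $\CLE_{\kappa'}$ loops encountered by our exploration can be matched (as a PPP in the parameter $u$) with the beads of the opposing weight-$(2-\gamma^2/2)$ forested wedge, while the negative jumps of $L$ encode the bead-endpoint disappearances of $\CH$'s own weight-$W_D$ spine. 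Both intensities are explicitly given by Remark~\ref{scalingremark}, and converting to densities per unit $u$ via the appropriate $\alpha'$-stable time change yields Gamma-function normalizations whose ratio simplifies, through Euler's reflection formula $\Gamma(\alpha')\Gamma(1-\alpha') = \pi/\sin(\pi\alpha')$ and the identity $\alpha\alpha' = 1$, to $c_+/c_- = -2\cos(\pi\alpha')$.

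The chief obstacle will be the coupling step: establishing a bijective match (with matching generalized lengths) between boundary-touching $\CLE_{\kappa'}$ loops in $\CH$ and beads of the opposing $(2-\gamma^2/2)$-weight wedge of $\eta'$ requires a careful combination of the BCLE/full-$\SLE_{\kappa'}(\kappa'-6)$ construction with the quantum-surface matching framework of~\cite{dms2014mating}, and careful application of the side-switching convention from Remark~\ref{sideremark}.
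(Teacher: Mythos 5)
Your proposal takes a genuinely different route from the paper's, but it has a substantive gap that is not merely a matter of filling in details.

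The paper's proof does \emph{not} attempt to compute the two L\'evy densities of $L$ directly. Instead it uses a structural observation: the set of times at which $L$ attains its running infimum is exactly the set $\CC$ of times ``between beads'' of $\CW$ (since for $u \notin \CC$ one has $L(u) \ge L(u_0)$ with $u_0$ the left extremity of the current bead). Thus the range of $-L^{\#} := -\inf_{v\le \cdot} L(v)$ is the range of a stable subordinator whose gaps correspond precisely to the forested bead boundary lengths of the weight-$W_D$ wedge $\CW$. Remark~\ref{scalingremark} gives these gap lengths the intensity $d\ell/\ell^{2-\alpha'}$, which pins down the subordinator index to $\alpha''=1-\alpha'$. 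One then invokes Bertoin's identity \cite[Chapter~VIII, Lemma 1]{bertoin96levy}, $U_L = \sin(\pi(\alpha'-\alpha''))/\sin(\pi\alpha'')$, and substitutes $\alpha''=1-\alpha'$ to get $U_L = -\sin(2\pi\alpha')/\sin(\pi\alpha') = -2\cos(\pi\alpha')$. The subordinator/ladder-height machinery is the mechanism that converts a statement about the geometry of $\CW$'s beads into a statement about the ratio of jump rates, and your proposal does not invoke it.

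In its place, you propose to match the positive jumps of $L$ (the boundary-touching $\CLE_{\kappa'}$ loops inside $\CH$) with the beads of the opposing weight-$(2-\gamma^2/2)$ forested wedge $\CW_1$ on the other side of $\eta_0'$, and to read off both intensities from Remark~\ref{scalingremark}. This does not work as stated. First, the boundary-touching loops of the $\CLE_{\kappa'}$ sampled inside $\CH$ are not in bijection with the beads of $\CW_1$: $\CW_1$ is a surface living on the \emph{other} side of $\eta_0'$, and the loops are an additional structure drawn inside $\CH$ after conditioning on $\eta_0'$. Second, even granting some matching, the two point processes live on different index sets: the beads of $\CW_1$ are a PPP in the Bessel local-time parameter of that wedge, whereas the positive jumps of $L$ are a PPP in the quantum boundary length $u$ along $\R_+$; the exponents ($d\ell/\ell^{2\alpha'}$ for the forested beads of $\CW_1$ versus $c_+\,\ell^{-1-\alpha'}\,d\ell$ for the jumps of $L$ per unit $u$) do not coincide, so the ``appropriate $\alpha'$-stable time change'' you allude to is precisely the nontrivial part of the argument, and it is left unspecified. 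You correctly flag the coupling step as the chief obstacle; in fact it is a missing idea rather than an obstacle to be pushed through, and the paper circumvents it entirely by working with the running infimum rather than trying to compute $c_+$ directly.
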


\begin {proof} 
A first observation is that by construction, the times $u$ at which $L_u$ attains its running infimum correspond exactly to the times in $\CC$, where the process is ``in between beads'' of the wedge $\CW$. Indeed, in all other cases, the process $L(u) \ge L(u_0)$ where $u_0$ is the left extremity of $\partial_u$. 

Now, it is a known feature of stable processes that the range of values of $-L(u)$ such that $L(u) = \min \{ L(v), v \le u \}$ is a stable subordinator of a certain index $1/\alpha''$ that can be expressed explicitly in terms of $\alpha'$ and of the ratio $U_L$ between the intensities of positive and negative jumps of $L$. More specifically (see \cite[Chapter~VIII, Lemma 1]{bertoin96levy}, where $\alpha''$ is equal to $\alpha'$ times the so-called positivity parameter of the process), one has the relation  
$$ U_L = \sin (\pi (\alpha' - \alpha'')) / \sin ( \pi \alpha'').$$ 
On the other hand, the negative jumps of $L^{\#}$ will have the same scaling properties as the generalized boundary lengths of the forested beads of $\CW$ which is given by the final expression in Remark~\ref{scalingremark}. We therefore get that $1+\alpha'' = 2 - \alpha'$. Plugging this into the previous expression for $U_L$ gives the result. 
\end {proof}

\begin {rema} 
It would be possible to try to show directly at this point that the rate of negative jumps of $L$ and the rate of negative jumps of $R$ actually coincide (which in turns determines the law of the pair $(R,L)$ up to a multiplicative constant), but we will derive this fact later in the general case. 
\end {rema}

\subsection{CLE exploration tree and proof of Proposition~\ref{main1}}
\label{subsec:ig}

The definition (and conformal invariance) of the conformal loop-ensembles $\CLE_{\kappa'}$ for $\kappa' \in (4,8)$ is based on the reversibility properties of $\SLE_{\kappa'}(\kappa'-6)$ processes, that have been derived using the ``imaginary geometry'' couplings of SLE-type processes with the GFF in \cite{MS_IMAG3,MS_IMAG4}. The properties that we will now recall and use can be viewed either as a consequence of the existence and properties of $\CLE_{\kappa'}$ or of the imaginary geometry coupling. We will first give the construction without reference to imaginary geometry, and then explain (in Remark~\ref{Rem:IG}) how this can be described in this framework. 

Let us recall how to define the collection of boundary-touching $\CLE_{\kappa'}$ loops in a simply connected domain with a marked boundary point via the corresponding SLE branching tree. 
For convenience, we first choose this domain to be the upper half-plane with marked point at infinity. 
Using their target-invariance property, one can define a branching-tree of $\SLE_{\kappa'}(\kappa'-6)$ processes starting from infinity, aiming at all boundary points $x \in \R$ (with marked point ``immediately to the left of infinity'' i.e. at $+\infty$ on the real line). In this way, for each $x \in \R$, $\eta_x'$ is an $\SLE_{\kappa'}( \kappa'-6)$ process from $\infty$ to $x$. And when $x \not= y$, the two processes $\eta_x'$ and $\eta_y'$ coincide until the first time at which they disconnect $x$ from $y$, and after this time, they evolve independently towards their respective target points.

{One can construct the collection of boundary touching loops out of this tree of processes $\eta_x'$ as follows. The idea is that in the end, for each given $x \in \R$, $\eta_x'$ (viewed as going from $x$ to $\infty$) will be the right side of the union of all $\CLE_{\kappa'}$ loops that touch the half-line $(- \infty, x]$. Let us first consider $\eta_0'$ (from $\infty$ to $0$).   Each excursion that $\eta_0'$ makes from $\R_-$ will then correspond to part of a boundary touching loop.  Suppose that we have such an excursion $\eta_0'|_{[s,t]}$.  Then $\eta_0'(s),\eta_0'(t) \in \R_-$ with $\eta_0'(s) < \eta_0'(t)$. If we condition on $\eta_0'$,  the $\CLE_{\kappa'}$ loop containing $\eta_0'|_{[s,t]}$ is then completed by concatenating $\eta_0'|_{[s,t]}$ with the part of $\eta_{y+}'$ for $y = \eta_0'(s)$ which is in the component of $\h \setminus \eta_0'([s,t])$ that has $[y, \eta_0' (t)]$ on its boundary (more precisely, if we have fixed a countable dense set $(x_n)$ of $\R$ and $(x_{n_j})$ is a subsequence of $(x_n)$ which decreases to $y$, then the rest of the loop is given by the limit as $n \to \infty$ of the part of $\eta_{x_n}'$ which is in the aforementioned component).  This gives the boundary intersecting loops of the $\CLE_{\kappa'}$ part of which are drawn by $\eta_0'$.  By considering the same construction with all of the $\eta_{x_n}'$ we can construct all of the boundary touching loops.}

\begin {rema} 
\label {remBCLE}
 The $\BCLE_{\kappa'}(\rho')$ processes defined in \cite {cle_percolations} are constructed in exactly the same way, except that one considers the branching tree of $\SLE_{\kappa'} (\kappa' - 6- \rho';\rho')$ processes (for $\rho' \not= 0 $ this time) instead of $\SLE_{\kappa'}(\kappa' -6)$
 (these processes are also target-independent, so that it is possible to construct such a branching tree).  We will use these $\BCLE_{\kappa'} (\rho')$ processes in the study of the general case $p \in [0,1]$, where they show up naturally.
 \end {rema} 

 \begin {rema} 
 \label {Rem:IG}
 As mentioned above, one way to understand (and to actually prove some of its features) the above constructions is to use the imaginary geometry framework, and to construct all these loops out of a GFF. For the $\CLE_{\kappa'}$, one can start with a GFF $h^{\IG}$ on $\h$ with boundary conditions given by $\lambda'-\pi \chi$ on the real line, where $\lambda' = \pi/\sqrt{\kappa'}$ and $\chi = 2/\sqrt{\kappa}-\sqrt{\kappa}/2$. Then, for each $x$, one defines $\eta_x'$ to be the counterflow line of $h$ from $\infty$ to $x$; it turns out (using the results of \cite{MS_IMAG}), that the joint law of these curves have all the properties described above.  For the $\BCLE_{\kappa'}(\rho')$ variant, one uses the same construction but with a GFF $h^\IG$ with boundary conditions $\lambda'(1+\rho')-\pi \chi$ on $\R$. This type of GFF-based construction was one of the starting points of \cite {cle_percolations}.   
\end {rema}

\begin{figure}[pt!]
\includegraphics[width=10cm]{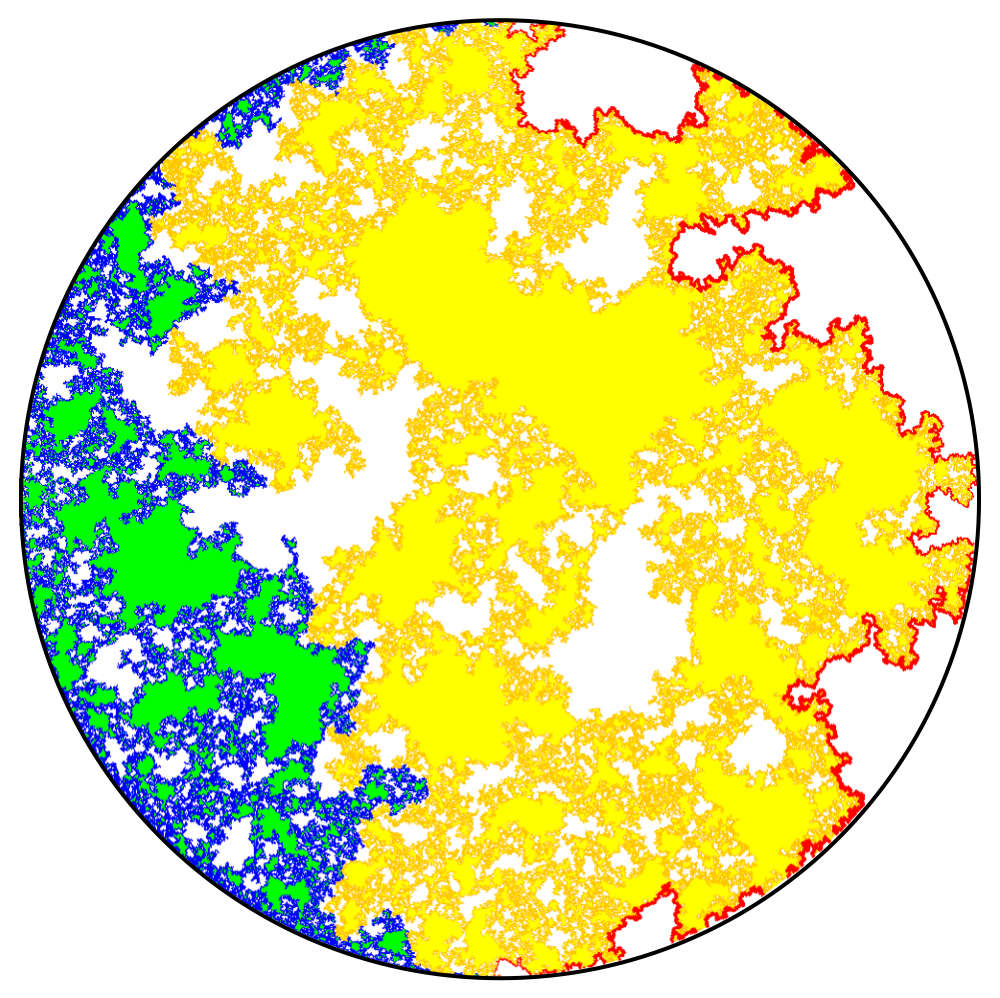}  
	\caption{A simulation (mapped onto the unit disk, and for $\kappa'=6$) of the situation at time $0$: In blue and yellow, are the   $\CLE_{\kappa'}$ loops that touch the left-hand semi-circle (corresponding to the image of the negative half-line). The boundary of the connected component of the complement of these loops that touches the right-hand semi-circle is drawn in red. This is a simple curve from $-i$ to $i$. The pieces to the right of this red curve form the beads of the wedge $\CW(0)$. The $\CLE_{\kappa'}$ loops that touch the left half-circle are drawn in blue or yellow depending if they contribute to the red curve of not. The right-hand part of the fjord structure in the yellow loops corresponds to the foresting of the left boundary of the wedge $\CW(0)$ that give rise to $\CH$ (once one also adds the foresting on the right half-circle).}
	
	\label {f8}
\end{figure}

Let us now condition on $\eta_0'$, and consider the connected components of the complement of $\h \setminus \eta_0'$ that touch the positive real half-line (i.e., that contain sub-intervals of the positive real half-line). In the previous construction of the boundary-touching CLE$_{\kappa'}$ via the tree of $\eta_x'$ processes, the path $\eta_0'$ traces the ``outer right'' part of the union of the clusters that touch the negative half-line. Hence,  conditionally on $\eta_0'$, the remaining parts of the CLE$_{\kappa'}$ in the connected components that lie between $\eta_0'$ and the positive half-line consist of independent CLE$_{\kappa'}$ processes in those domains. 
It follows that, conditionally on $\eta_0'$, the remaining parts of the paths 
$\eta_x'$ for $x > 0$ after they split from $\eta_0'$ will trace the loops of a CLE$_{\kappa'}$ that touch $[0, \infty)$ in these domains.  

Let us now combine this exploration-tree setup with the LQG surface setup, i.e., we endow $\h$ with an independent quantum half-plane structure $\CW = (\h,h,0,\infty)$. We will also forest this half-plane using a Poisson point process of generalized disks on its boundary.  

A first main observation is that (see the remark after Theorem~\ref{thm:gluing_wedges'})  the quantum surface  corresponding to the domain that lies to the right of $\eta_0'$ is a generalized half-plane -- we denote it by $\CH$ (the marked point being $0$), see Figure \ref {f8}.

When $u > 0$, we define $x(u)$ in $\R_+$ to be so that the quantum length of $[0,x(u)]$ is equal to $u$.  By translation invariance of the half-plane, $(\h, h, x(u), \infty)$ is also a quantum half-plane, and consequently, the quantum surface $\CH(u)$ corresponding to the domain that lies to the right of $\eta_{x(u)}'$  is also a  generalized quantum half-plane that we will denote by $\CH(u)$, see Figure \ref {f9}.

\begin{figure}[ht!]
\includegraphics[width=10cm]{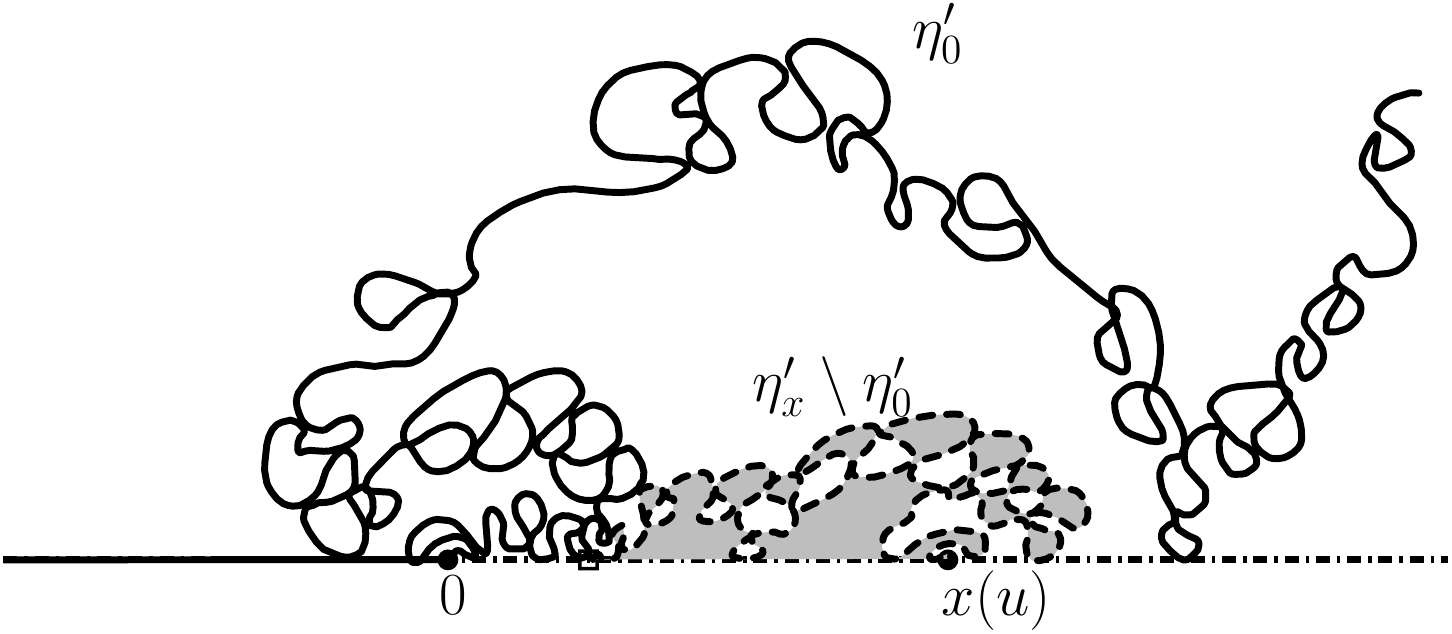}   
	\caption{The pieces underneath the right of $\eta_0'$ form $\CH$ (the dashed-dotted half-line being forested) and similarly, the pieces underneath the right of $\eta_{x(u)}'$ form $\CH(u)$. One can view $\eta_x' \setminus \eta_0'$ (in dashed) as the upper boundary of the $\CLE_{\kappa'}$ loops in $\CH(0)$ that touch $[0, x]$.}
	
	\label {f9}
\end{figure}

Let us condition on $\eta_0'$. As explained above, the remainder of all the $\eta_x'$ for $x >0$ can be interpreted as the discovery of all loops that touch $\R_+$ in a $\CLE_{\kappa'}$ in the connected components of the wedge of weight $W_D=\gamma^2-2$ that forms the spine of $\CH(0)$.

We have therefore a simple way of defining all the structures that appear in Proposition~\ref{main1}: The forested quantum half-plane 
together with $\eta_0'$ defines the generalized half-plane $\CH$ and the remainder of all the $\eta_x'$ for $x >0$ then defines the $\CLE_{\kappa'}$ loops that are used to define $\CH(u)$. In particular, we can note that $\eta_{x(u)}' \setminus \eta_0'$ traces exactly the (generalized) boundary of ${\mathcal D}_0(u)$ that is not part of the boundary of ${\mathcal D}(u)$. We can then conclude that the generalized LQG surface $\CH(u)$ in Proposition \ref {main1} is indeed a generalized quantum half-plane. 

To complete the proof, it then suffices to note that the $\sigma$-algebra $\CF_u$ is independent of $\CW(u)$, because the (forested) quantum wedge that lies to ``the other side'' of $\eta_{x(u)}'$ is independent from $\CH(u)$, and that the random surfaces cut out before time $u$ can be constructed by sampling appropriate and independent SLE-type curves within this quantum surface.

\subsection {Jumps correspond to generalized quantum disks}
Recall that we have noted that one can associate to each jump of the process ${\mathcal H}(u)$ (i.e., to each jump of $R$ and $L$) a loop-tree structure of LQG surfaces.

\begin{prop}
 \label{main2} 
 In the setup of Proposition~\ref{main1}, one has three independent Poisson point processes of generalized disks (corresponding respectively to the positive jumps of $L$, the negative jumps of $L$ and the negative jumps of $R$). 
\end{prop}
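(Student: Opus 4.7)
The plan is to combine the stationary Markov structure of $\CH(\cdot)$ guaranteed by Proposition~\ref{main1} with a case-by-case identification of the loop-tree LQG surface cut out at each jump.

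By Proposition~\ref{main1} together with the invariance of the generalized quantum half-plane under shifting its marked point by quantum length, $(R_u,L_u)_{u\ge 0}$ is a L\'evy process whose jumps can be decorated with the associated loop-tree LQG surface that disappears from $\CH(u)$ at the jump time. Standard L\'evy--It\^o theory then gives that the marked jumps form a Poisson point process on $[0,\infty)$ of intensity $du\otimes\nu$ for some $\sigma$-finite measure $\nu$ on marked LQG surfaces. The three subcollections (negative jumps of $R$, positive jumps of $L$, negative jumps of $L$) occur at almost surely disjoint times, so restricting $\nu$ to each sign and side yields three Poisson point processes. Their independence follows from two observations: $R\perp L$ (this is the preceding Corollary, since $R$ is a measurable function of the right-side foresting, which is independent of the CLE and of the left-side foresting that determines $L$), and positive versus negative jumps of the pure-jump L\'evy process $L$ are independent by the L\'evy--It\^o decomposition.

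It then remains to identify each jump measure as that of a generalized quantum disk. For negative jumps of $R$ this is immediate from the construction of $\CH$: the surface removed when $x(u)$ crosses a grafted disk of generalized length $\ell$ is precisely that generalized disk, and the PPP of such disks along the right boundary is by definition one of the ingredients of the generalized quantum half-plane. For positive jumps of $L$, each such jump corresponds to the branching exploration $\eta'_{x(u)}$ first hitting a new boundary-touching $\CLE_{\kappa'}$ loop $\CL$; the cut-out surface is the LQG region enclosed by $\CL$, which carries the structure of a generalized quantum disk by Theorem~\ref{thm:gluing_wedges'} applied to the $\SLE_{\kappa'}(\kappa'-6)$ arc that traces $\CL$ from one side, combined with the description of the interior of an $\SLE_{\kappa'}$ loop drawn on a $\gamma$-LQG surface as a generalized disk.

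For negative jumps of $L$ the cut-out fjord at time $v$ is bounded on one side by part of $\partial\CD(u')$ for some previously visited bead $\CD(u')$, and on the other by a concatenation of arcs of $\CLE_{\kappa'}$ loops already discovered inside $\CD(u')$. The main obstacle is to recognize this composite outer boundary as an $\SLE_{\kappa'}$-type loop in the appropriate conditional sense, so that Theorem~\ref{thm:gluing_wedges'} can be invoked a second time to identify the enclosed surface as a generalized quantum disk. The plan for this step is to use the imaginary geometry description of the branching exploration tree from Section~\ref{subsec:ig} (see Remark~\ref{Rem:IG}): viewing $\eta'_{x(u)}$ as a counterflow line of an auxiliary GFF, the flow-line arcs that assemble the fjord boundary together form the outer perimeter of a single $\SLE_{\kappa'}$ loop, so the fjord interior is a generalized quantum disk. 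The consistency of the three jump intensities with the intensities of generalized-disk jump measures then follows from the $\alpha'$-stable scaling of generalized boundary lengths recalled in Remark~\ref{scalingremark}.
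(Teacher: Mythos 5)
Your skeleton for the independence and Poissonian structure (stationarity from Proposition~\ref{main1}, restriction of the L\'evy measure to the three sign/side sectors, and $R\perp L$ from the construction of $\CH$) matches the paper's reasoning, and the identification of the negative $R$ jumps is indeed immediate by construction. The real content of the proposition, however, is identifying the jump surfaces of $L$ as generalized quantum disks, and there your proposal has a genuine gap.

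For the positive jumps of $L$, you appeal to ``the description of the interior of an $\SLE_{\kappa'}$ loop drawn on a $\gamma$-LQG surface as a generalized disk'' as though it were an available ingredient. It is not: the paper explicitly flags this as something it will \emph{establish} (``as we will actually show, [generalized disks] can in some sense be viewed as the surface inside of an $\SLE_{\kappa'}$ loop''), and Proposition~\ref{main2} is precisely a step in that demonstration. Invoking it here is circular. Nor does Theorem~\ref{thm:gluing_wedges'} directly apply, since it describes the two sides of an $\SLE_{\kappa'}(\rho_1';\rho_2')$ between two marked boundary points of a forested wedge, not the interior of a loop. The paper's actual argument is quite different in character: it conditions on a rare event $E_u$ (a boundary-touching loop of outer quantum length $\ge\delta$ hit on $[0,x(u)]$, traced by $\eta_x'$), decomposes the half-plane by the left boundary $\eta_{x,L}$ of $\eta_x'$ into wedges of weights $\gamma^2/2$ and $2-\gamma^2/2$, uses Theorem~\ref{thm:gluing_forested_lines} to recognize the excursions of $\eta_x'$ away from $\eta_{x,L}$ as a PPP of generalized disks, and then sends $u\to 0$ while showing that the small correction (the piece of the loop under $\eta_{x,L}$ together with the ambient bead) vanishes. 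None of this careful limiting/conditioning structure appears in your sketch, and without it you have not identified the jump measure.

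For the negative jumps of $L$, what you write is explicitly a ``plan'' and it also aims at the wrong decomposition. The assertion that ``the flow-line arcs that assemble the fjord boundary together form the outer perimeter of a single $\SLE_{\kappa'}$ loop'' is not what happens: the cut-out tree has an upper boundary traced by an excursion of $\eta'$ and a lower boundary that must be \emph{completed} by an SLE-type curve inside the relevant bead below $\eta_{0,R}$. The paper handles this by symmetry with the positive jump case, using $\eta_0'$ and its right boundary $\eta_{0,R}$ in place of $\eta_x'$ and $\eta_{x,L}$; again the conclusion only holds in the $u\to 0$ limit. To make your proof complete you would need to carry out this two-sided decomposition and the accompanying limiting argument rather than appeal to a structural fact that is itself to be proved.
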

\begin {rema} 
We are now going to use some of the ideas that were already instrumental in \cite {cle_percolations}. 
One consequence of the imaginary geometry setup described in Remark~\ref{Rem:IG} is that it allows us to describe also the joint distribution of all the paths $\eta_x'$ with their outer boundaries. If $\eta_L$ and $\eta_R$ respectively denote the flow lines of the GFF $h^{\IG}$ starting at $0$ with respective angles $\pi/2$ and $-\pi/2$, then these paths will be the outer-left and outer-right boundaries of $\eta_0'$. Furthermore: (a) the law of $\eta_L$ and $\eta_R$ is that of an $\SLE_\kappa(-\kappa/2;\kappa/2-2)$ and an $\SLE_\kappa(2-\kappa;\kappa-4)$ process in $\h$ from $0$ to $\infty$. (b) The conditional law of $\eta_0'$ given $\eta_L$ is {that of} an $\SLE_{\kappa'}(\kappa'/2-4)$ process in the domain to the right of $\eta_L$, the conditional law of $\eta_0'$ given $\eta_R$ is that of an $\SLE_{\kappa'}(\kappa'-6;\kappa'/2-4)$ process in the domain to the left of $\eta_R$. (c) The conditional law of $\eta_0'$ given both $\eta_L$ and $\eta_R$ is that of an $\SLE_{\kappa'}(\kappa'/2-4;\kappa'/2-4)$ process in each of the domains between $\eta_R$ and $\eta_L$. 
\end {rema}

\begin{proof}
Let us first note that the fact that the downward jumps of $R$ correspond to a Poisson point process of generalized quantum disks that is independent of the jumps of $L$ (and the corresponding surfaces) is a direct consequence of the construction: These are the generalized disks that had been forested to the positive half line of $\CW(0)$, and that are indeed independent of the rest. It therefore remains to look at the surfaces corresponding to the jumps of $L$. 

In order to understand the intensity measures of the Poisson point processes of surfaces that are cut out of the generalized half-planes ${\mathcal H}(u)$, we can focus on the law of these cut-out surfaces during the time-interval $[0,u]$ and take $u \to 0$.  

Let us now work in the same LQG setup as in the proof of Proposition~\ref{main1}.  We first look at the positive jumps of $L$, i.e., that correspond to the discovery of a boundary-touching $\CLE_{\kappa'}$ loop. 
Fix $\delta$ small and positive (we will eventually let $\delta \to 0$ as well).  We first fix $u >0$ and let $x=x(u)$.  Let $E=E_{u}$ be the event that there exists a boundary-touching $\CLE_{\kappa'}$ loop ${\mathcal L}$ that disconnects $x$ from infinity, such that the left-most point of $I_{\mathcal L}:= {\mathcal L} \cap \R$ is in $[0, x(u)]$, that the outer part $o({\mathcal L})$ of the loop ${\mathcal L}$ (going clockwise from the left-most point of $I_{\mathcal L}$ to its rightmost point) is traced by $\eta_x'$ and has quantum length at least $\delta$. 
Scaling and root-invariance of the quantum half-plane shows that the probability of $E_u$ decays like $c(\delta) \times u$ as $u \to 0$.  

The following observation will also be useful: Suppose that $E_u'$ denotes the same event as $E_u$, except that we do not impose the condition that the loop is traced by $\eta_x'$ (this means that we allow the possibility that this loop is ``hidden'' underneath $\eta_x'$). Then $\p[ E_u \giv E_u'] \to 1$ as $u \to 0$. Indeed, if it was not the case, then Proposition~\ref{main1} would imply that with positive probability, two macroscopic $\CLE_{\kappa'}$ loops happen to have the same left-most boundary point, which we know can not happen. 

Let $\eta_{x,L}$ be the left boundary of $\eta_{x}'$ (viewed as a path from $x$ to $\infty$). We now have the following features: (a) By Theorem~\ref{thm:gluing_wedges}, the domain to the right of $\eta_{x,L}$ (i.e., where $\eta_x'$ is) is a quantum wedge ${\mathcal W}_1$ of weight $\gamma^2 / 2$ and the domain to its left is an independent wedge ${\mathcal W}_2$ of weight $2 - \gamma^2 /2$. (b) The conditional law of $\eta_x'$ given $\eta_{x,L}$ is that of an $\SLE_{\kappa'} (\kappa' /2 - 4)$ in this first wedge ${\mathcal W}_1$. In particular, the excursions away from $\eta_{x,L}$ traced by $\eta_x'$ within ${\mathcal W}_1$ form a Poisson point process of generalized quantum disks that is independent of ${\mathcal W}_2$. 

Each excursion of $\eta_x'$ away from $(- \infty, x]$ traces the outer piece of a $\CLE_{\kappa'}$ loop, and it is obtained by concatenating all the excursions that $\eta_x'$ makes away from an excursion of $\eta_{x,L}$ away from $(- \infty, x]$, see Figure \ref {f10}. Note that these excursions of $\eta_{x,L}$ correspond to the beads of ${\mathcal W}_2$. To complete that $\CLE_{\kappa'}$ loop, one has to draw the little missing piece of the loop, which lies below $\eta_{x,L}$, i.e, within a bead of ${\mathcal W}_2$ -- see Figure \ref {f11}.

\begin{figure}[ht!]
\includegraphics[width=10cm]{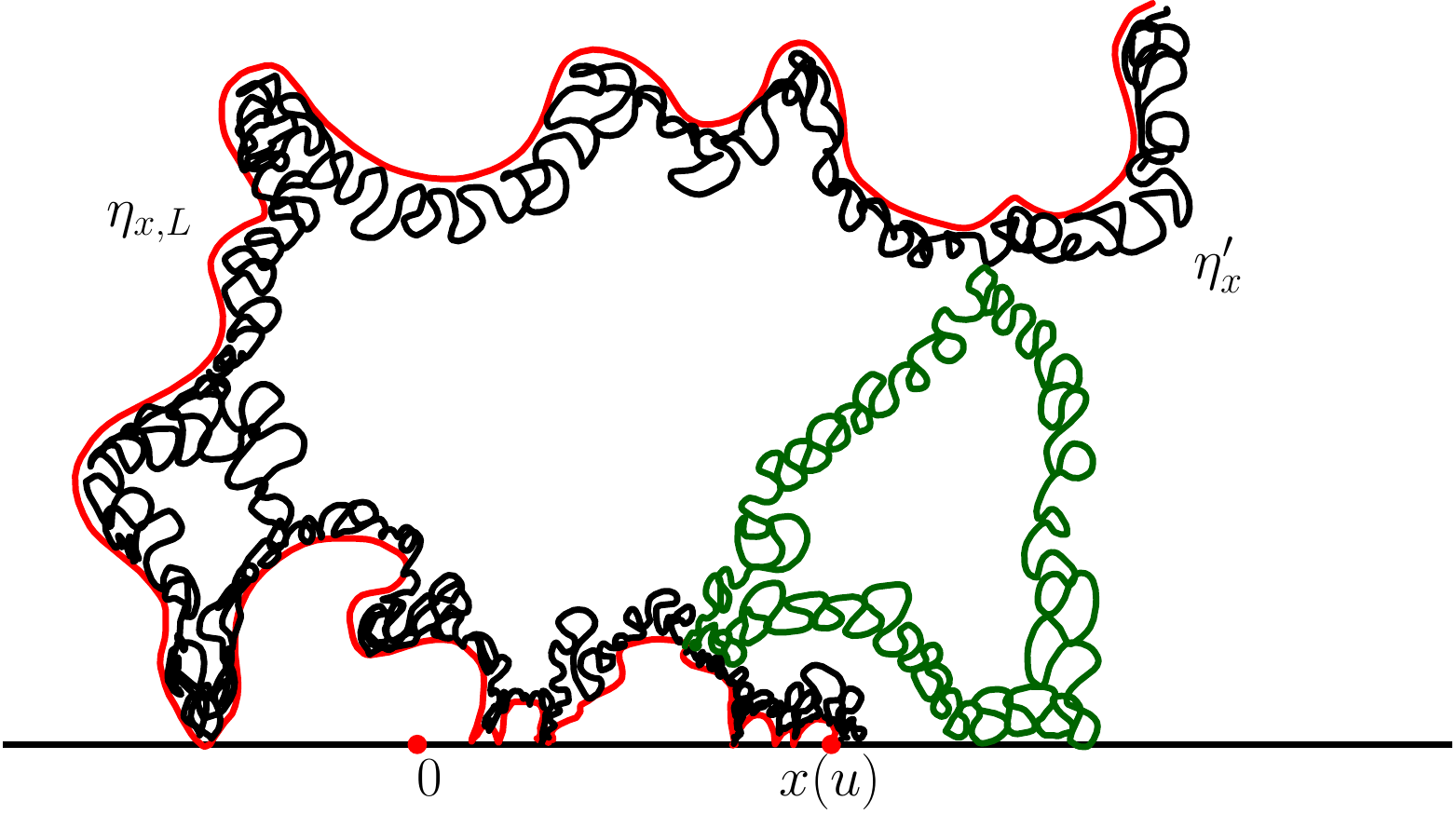}   
	\caption{The curve $\eta_{x,L}$ and $\eta_{x}'$ when $E_u$ occurs (the large excursion of $\eta_x'$ away from $\eta_{x,L}$ in dark green).}
	\label{f10}
	
\end{figure}

\begin{figure}[ht!]
\includegraphics[width=10cm]{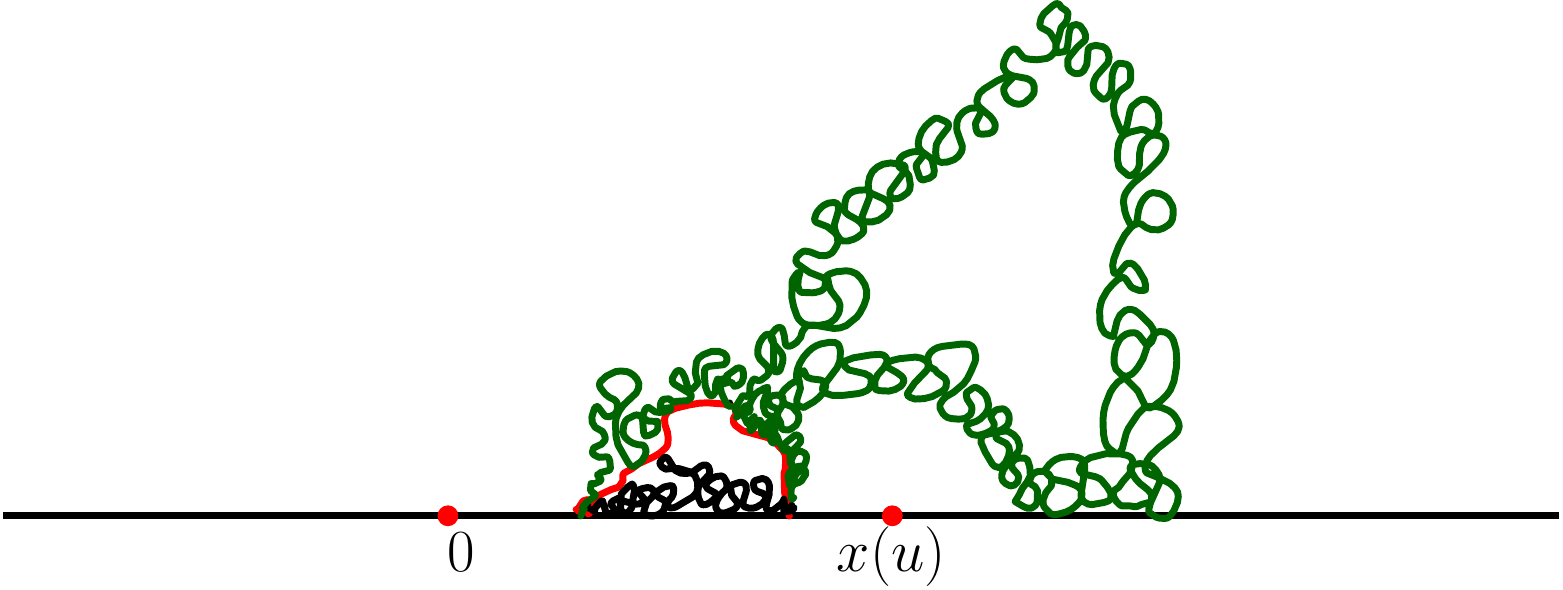}  
	\caption{Completing the large $\CLE_{\kappa'}$ loop that intersects $[0, x(u)]$}
	\label {f11}
	
\end{figure}

When $u$ is very small, one can wonder whether $\eta_{x, L}$ will do something exceptional when one conditions on $E_u$. Simple scaling considerations show that it does not, i.e, that the conditional boundary lengths of the beads of $\eta_{x,L}$ containing some piece of $[0, x(u)]$ on their boundary remains comparable to $u$ (the ratio remains tight). In particular, the quantum length of the piece of the loop under $\eta_{x,L}$ will be typically very small and the area of the entire bead will be very small as well.   

On the other hand, conditionally on $E_u$, $\eta_x'$ will typically make one very large excursion away from $\eta_{x,L}$ (the interior of which is a generalized disk), while all other excursions will be very small as well (this is simply due to the standard fact that conditioning a subordinator to take a very large value at a given time is essentially equivalent to conditioning on the existence of a very large jump). 

So, for very small $u$, we see that conditionally on $E_u$, the inside of the discovered large $\CLE_{\kappa'}$ loop will consist of the generalized quantum disk of boundary length greater than $\delta$, to which one attaches a quantum surface of very small quantum area at a boundary-typical point.  Letting $u \to 0$, one readily concludes that conditionally on $E_u$, the law of the cut-out surface is that of a generalized quantum disk with boundary length at least $\delta$ (and renormalized to be a probability measure). 

Let us now look at the downward jumps of $L$. This time, we use $\eta_0'$ and its right-boundary $\eta_{0,R}$. We note that when $u$ is very small, $L$ will make a large negative jump before time $u$ when $\eta'$ makes a large excursion away from some point in $[0, x(u)]$ to some other point. This excursion will then form the ``upper boundary'' of the cut out surface, and the lower boundary will then be completed by an SLE-type process in the corresponding bead under $\eta_{0,R}$. The above arguments for the positive jumps can be readily adapted to see that those cut out surfaces are also a point process of generalized quantum disks.
\end{proof}

\section{Results for general $p$} 
\label{S4}
\subsection{Main statement}

We now explain the results for general $p \in [0,1]$. Suppose that we have the same setup as in Propositions~\ref{main1} and~\ref{main2}: We start with a generalized quantum half-plane $\CH$ that is obtained by adding the Poisson point process of generalized disks to the boundary of a weight $W_D = \gamma^2 - 2$ wedge $\CW$. 
We consider a $\CLE_{\kappa'}$ in $\CW$ (i.e., an independent one in each of the quantum disks that form $\CW$) and we this time color each $\CLE_{\kappa'}$ loop red (resp.\ blue) independently with probability $p$ (resp.\ $1-p$). We then define in $\CW$ the interface $\eta$ between the red clusters that touch the clockwise boundary arc of $\CW$ from $x_0$ to infinity, and the blue clusters that touch the counterclockwise boundary arc.  

By \cite{cle_percolations}, we know that for each $p \in [0,1]$ there exists $\rho \in [-2,\kappa-4]$ so that this interface is an $\SLE_{\kappa} (\rho; \kappa-6-\rho)$ process. More precisely, this path is the union/concatenation of the corresponding interfaces in each of the quantum disks of $\CW$ (from one of the marked points to the other). When $p=0$ (resp.\ $p=1$), this interface obviously follows the left (resp.\ right) boundary respectively and this corresponds to $\rho=-2$ (resp.\ $\rho=\kappa-4$). By symmetry, we also know that when $p=1/2$, $\rho = (\kappa-6)/2$.

The continuous path $\eta'$ obtained by following the interface $\eta$ and tracing each encountered $\CLE_{\kappa'}$ loop (clockwise or counterclockwise, depending on its color) at the first time at which the trunk encounters it, is called the {\em full} $\SLE_{\kappa'}^\beta$ (or full $\SLE_{\kappa'}^\beta (\kappa'-6)$)  for $\beta = 2p-1 \in [-1, 1]$ in \cite{cle_percolations}, but we will not really use this terminology here, as in the present paper, we will rather want to interpret the discovery of $\CLE_{\kappa'}$ loops as jumps. 

We now choose to parameterize the interface $\eta$ according to its quantum length. Let us now explain how to define the quantum surfaces $\CH(t)$ and $\CW(t)$ (we opt for a somewhat heuristic description of the definition here, that put together with the figures is hopefully more enlightening than the dry formal definition). One main difference with the $p=0$ and $p=1$ cases is that when $p \in (0,1)$, $\eta$ will touch both sides of the disks of $\CW$ that it traverses (when it goes from one marked boundary point to the other, it will hit both the clockwise and the counterclockwise boundary arcs joining these two points infinitely many times). 
We let $\CD(t)$ denote the bead of the initial wedge $\CW$ in which $\eta(t)$ is (for each given positive $t$, this is indeed almost surely well-defined).  
Our rules will be that whenever $\eta$ hits the boundary of the disks or hits already traced $\CLE_{\kappa'}$ loops, then it cuts away the disconnected pieces (that lie on the other side of the direction in which $\eta$ is then heading). These pieces then fall off from $\CH(t)$ and $\CW(t)$. On the other hand, when a newly discovered $\CLE_{\kappa'}$ loop appears, one removes only the inside of this loop, but keeps its ``fjords'' in $\CW(t)$ (they will be part of the generalized surfaces in $\CH(t)$). We note that at these times, $\eta(t)$ will in fact almost surely be at the end of a chain of such fjords (at these times, we also interpret $\eta(t)$ to be on the ``side'' that corresponds to the color of the discovered loop, so that $\eta$ will continue leaving the red loops on its left and the blue loops on its right, see Figure \ref {f12}).

\begin{figure}[ht!]
\includegraphics[width=6cm]{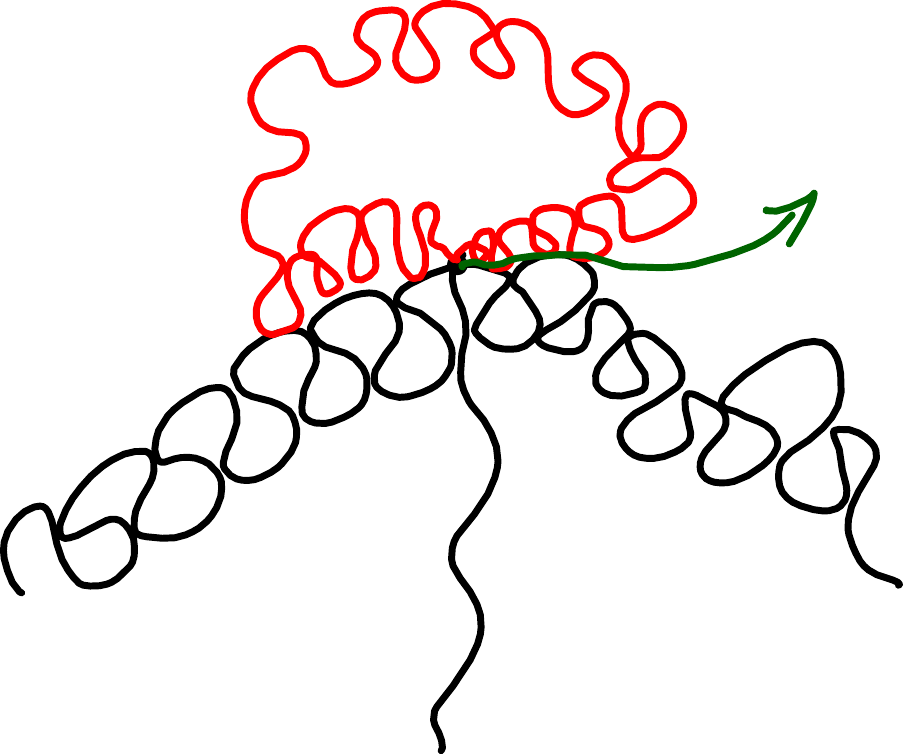}
\hspace{0.05\textwidth}
\includegraphics[width=6cm]{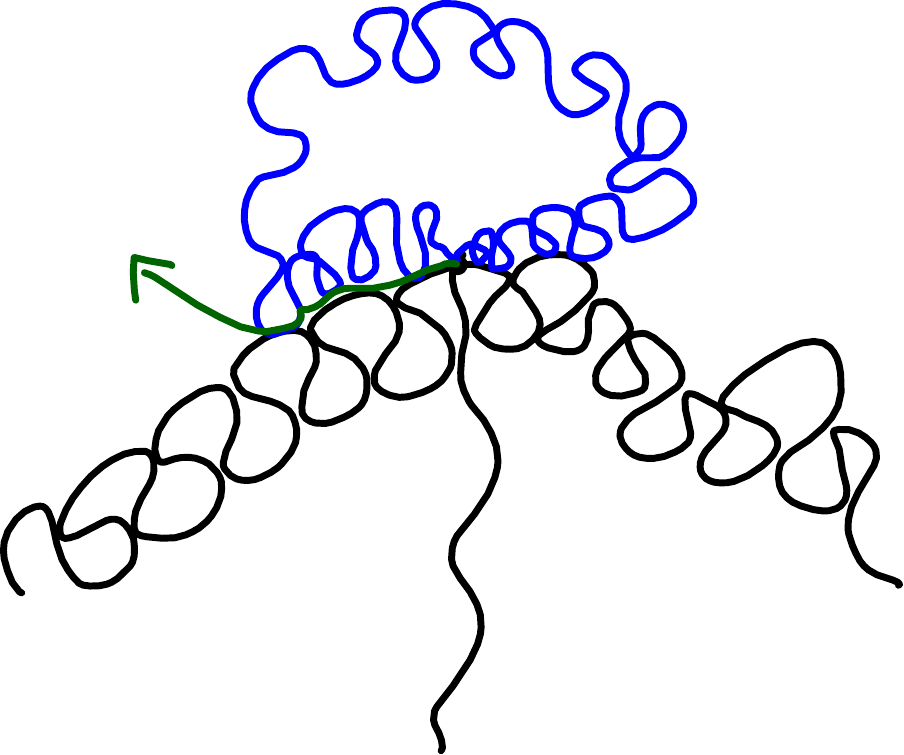} 
	\caption{Sketch: $\eta$ discovers a $\CLE_{\kappa'}$ loop at time $t$ (depicted is $\eta$ up to time $t$, a portion of the boundary of ${\mathcal H}(t^-)$ and the discovered loop). One chooses the ``side'' of the boundary point $\eta(t)$ according to the color of the discovered $\CLE_{\kappa'}$ loop.}
	\label {f12}
\end{figure}

In this way, one obtains for each time $t$, two quantum surfaces  $\CW(t)$ and $\CH(t)$ with one marked boundary point $\eta(t)$ (and the other boundary point at infinity), see Figure \ref {f13} for a sketch.

\begin{figure}[ht!]
\includegraphics[scale=.7]{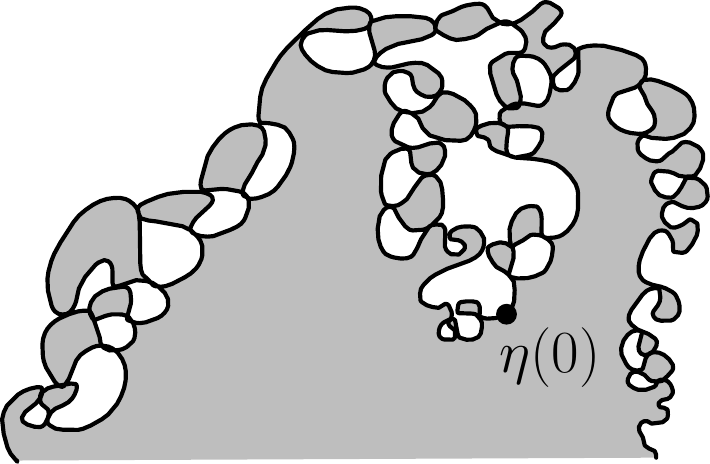}  
\quad
\includegraphics[scale=.7]{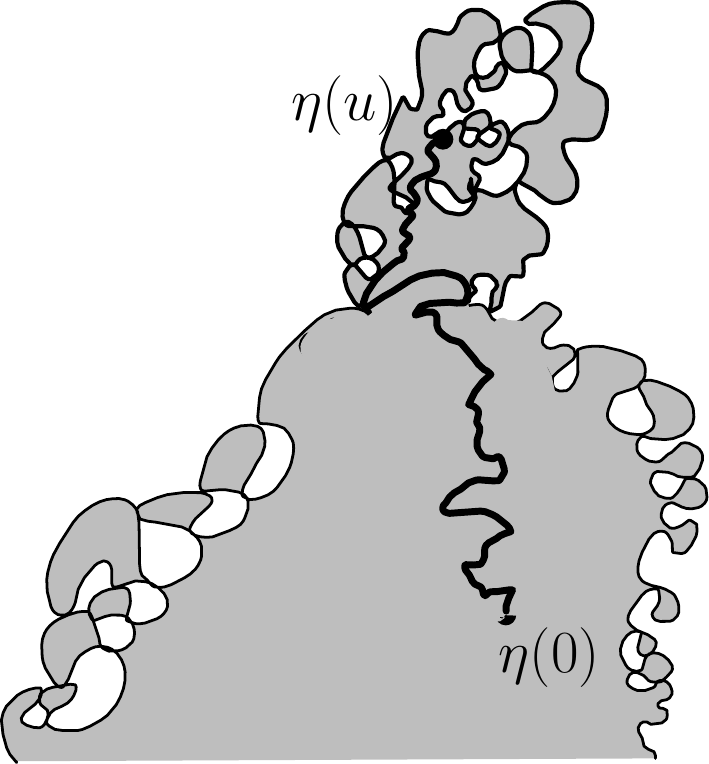}   
	\caption{Sketch of ${\mathcal H}(0)$ and ${\mathcal H}(t)$ (non-shaded regions)}
	\label {f13}
\end{figure}

We can then define the two processes $(R_t)_{t \ge 0}$ and $(L_t)_{t \ge 0}$ in a similar manner as for $p=1$. We note that this time, $R$ and $L$ will both have positive and negative jumps. The positive jumps of $R$ correspond to the times at which the interface discovers a new $\CLE_{\kappa'}$ loop that lies to its right (i.e., a blue loop), and the negative jumps correspond to times at which the interface disconnects some piece to its right (by either hitting the right boundary of the wedge, or an already discovered blue loop).  By symmetry, all the analogous statements hold for the jumps of $L$. This defines four point processes of quantum surfaces, corresponding respectively to the inside of the blue loops giving rise to a positive jump of $R$, the inside of the red loops giving rise to a positive jump of $L$, the surfaces that are cut out by the trunk to its right and that are cut out by the trunk to its left. All these surfaces have $\eta(t)$ as a marked boundary point. Finally, we let $\CF_t$ be the $\sigma$-algebra generated by these four point processes of quantum surfaces up to time $t$. 

We can now state what can be viewed as the main key result of the present paper (recall that $\kappa' \in (4,8)$ and $\alpha' = 4 / \kappa'$): 

\begin{theo}
\label{thm3} When $p \in (0,1)$, the following statements hold: 
\begin {enumerate}[(i)]
 \item\label{it:unexplored} For each $t \ge 0$, the quantum surface $\CH(t)$ is a generalized quantum half-plane that is independent of $\CF_t$. 
 \item\label{it:bl_proc} The two processes $L$ and $R$ are independent $\alpha'$-stable L\'evy processes. 
 \item\label{it:jumps_and_p} The ratio between the rates of positive jumps of $R$ and the rates of positive jumps of $L$ is $p / (1-p)$. 
 \item\label{it:jump_ratio} The ratio $U_L$ (resp.\ $U_R$) of the intensity of upward to downward jumps of $L$ (resp.\ $R$) is given by
\[ U_L = \frac{\sin(-\pi \rho/2)}{\sin(\pi \rho/2- \pi \alpha')}\quad\text{and}\quad U_R = \frac{\sin(2 \pi \alpha' - \pi \rho/2)}{\sin( \pi \rho / 2  - \pi \alpha')}.\] 
 \item\label{it:disks_are_disks} The jumps of $R$ and $L$ correspond to four independent Poisson point processes of quantum disks. 
\end {enumerate} 
\end{theo}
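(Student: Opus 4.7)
The plan is to extend the Section~\ref{S3} argument to $p \in (0,1)$, where the trunk $\eta$ is now a genuine $\SLE_\kappa(\rho;\kappa-6-\rho)$ curve that cuts $\CW$ into two non-trivial pieces, each of which hosts one colour class of CLE loops. The main inputs are the wedge slicing/welding theorems (Theorems~\ref{thm:gluing_wedges},~\ref{thm:gluing_wedges'},~\ref{thm:gluing_forested_lines}), the branching $\SLE_{\kappa'}$-tree construction of boundary-touching CLE from Section~\ref{subsec:ig} applied separately on each side, and Bertoin's identity for the positivity parameter of an $\alpha'$-stable process.

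For part~\eqref{it:unexplored}, the first step is to slice the spine $\CW$ (of weight $W_D = \kappa-2$) along the trunk $\eta$ using Theorem~\ref{thm:gluing_wedges}. This yields two independent quantum wedges $\CW_{\text{red}}$ of weight $\rho+2$ on the side of the red loops and $\CW_{\text{blue}}$ of weight $\kappa-4-\rho$ on the side of the blue loops; the generalized disks forested onto $\CH$ are split between their outer boundaries and remain independent. Conditionally on the trunk, the coloured-CLE structure inside $\CW_{\text{red}}$ (resp.~$\CW_{\text{blue}}$) is the branching $\SLE_{\kappa'}$-tree of red (resp.~blue) boundary-touching loops from Section~\ref{subsec:ig}, to which Theorem~\ref{thm:gluing_forested_lines} identifies the residual region as a forested wedge. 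Combining the two sides with the invariance of the generalized quantum half-plane under a fixed boundary-length root-shift (recalled in Section~\ref{S2}) shows that $\CH(t)$ is a generalized quantum half-plane independent of $\CF_t$. Part~\eqref{it:bl_proc} then follows: the Markov property makes $L$ and $R$ Lévy processes; LQG scaling of generalized boundary length (Remarks~\ref{areascaling} and~\ref{scalingremark}) delivers $\alpha'$-stability; and the independence of $L$ and $R$ is inherited from the independence of $\CW_{\text{red}}$ and $\CW_{\text{blue}}$ and of the branching trees they host.

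Parts~\eqref{it:jumps_and_p} and~\eqref{it:disks_are_disks} are then essentially immediate. Every positive jump of $L$ (resp.~$R$) is the interior of a newly discovered red (resp.~blue) CLE loop, which by Theorem~\ref{thm:gluing_forested_lines} is a generalized quantum disk; every negative jump is a pocket cut off by the trunk, which is a generalized quantum disk by the bead structure of $\CW_{\text{red}}$ or $\CW_{\text{blue}}$ combined with Theorem~\ref{thm:gluing_wedges'}. The four resulting Poisson point processes are independent because the colouring is an independent Bernoulli thinning of the common loop-rate measure with probability $p$, which also accounts for the ratio in~\eqref{it:jumps_and_p}.

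For part~\eqref{it:jump_ratio}, the plan is to run the computation from the end of Section~\ref{S3.1} on each side. The running infimum of $L$ is attained exactly at the times when the trunk is ``between beads'' of $\CW_{\text{red}}$ (since while the trunk is inside a bead it has only added red loops, i.e.~positive jumps of $L$), so the ladder subordinator of $L$ has the same scaling index as the bead boundary lengths of $\CW_{\text{red}}$; by Remark~\ref{scalingremark} with $W = \rho+2$ this index equals $\alpha''_L = \alpha' - (\rho+2)/2$. Substituting into Bertoin's formula $\sin(\pi(\alpha'-\alpha''))/\sin(\pi\alpha'')$ from \cite[Ch.~VIII, Lem.~1]{bertoin96levy} and simplifying using $\kappa = 4\alpha'$ recovers the stated $U_L$; the analogous computation on $\CW_{\text{blue}}$ with $W = \kappa-4-\rho$ gives $U_R$. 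The main obstacle will be part~\eqref{it:unexplored}: one must show that the two branching trees on the two sides can be set up jointly with the trunk in such a way that the iterated applications of Theorems~\ref{thm:gluing_wedges} and~\ref{thm:gluing_wedges'} produce genuinely independent pieces at every step, which is exactly where the imaginary-geometry coupling (Remark~\ref{Rem:IG} and Lemma~\ref{lem:sle_outer_boundary}) enters, by realising the trunk and all loop outer boundaries simultaneously as flow/counterflow lines of a single GFF.
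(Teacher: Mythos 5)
Your outline of parts~\eqref{it:bl_proc}--\eqref{it:jump_ratio} is essentially the paper's argument, and your computation of $U_L$, $U_R$ via the ladder index $\alpha'' = \alpha' - W/2$ for the two sliced wedges together with Bertoin's formula matches the paper exactly (the paper computes on the $\CW_{\mathrm{blue}}$ side and obtains $U_L$ by symmetry, but this is equivalent to what you do). You also correctly identify that the minimum of $L$ is only attained at the ``between-bead'' times of $\CW_{\mathrm{red}}$; however your parenthetical justification for this is not right: while the trunk is inside a bead, $L$ does make negative jumps (pockets cut off against the left boundary or against already-discovered red loops), so it is not true that only positive jumps occur there. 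The correct reason is the same as in the $p=1$ case: during an excursion corresponding to a bead, $L$ stays above its value at the two endpoints of that excursion, so the running infimum only decreases at between-bead times.

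The genuine gap is in part~\eqref{it:unexplored}, and you have mis-identified the mechanism. The ``root-shift invariance of the generalized quantum half-plane'' is the invariance of $\CH$ under shifting the marked point by a fixed amount of generalized boundary length, which is the relevant operation only when $p \in \{0,1\}$ (where $\eta$ tracks the boundary). For $p \in (0,1)$ the trunk $\eta$ moves into the interior, so $\eta(t)$ is not a boundary-length shift of the root; invoking root-shift invariance there does not prove anything. The paper's actual mechanism is to embed the whole structure inside a weight-$4$ quantum wedge $\CWW$ with an independent $\SLE_\kappa$ trunk $\eta$, realize $\CW$ as the region between the flow lines $\eta_L,\eta_R$ of angles $\theta$ and $\theta - \theta_0$ of a single imaginary-geometry GFF, and realize $\CH$ between the corresponding counterflow lines $\eta_L',\eta_R'$. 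Stationarity then follows from the \emph{quantum zipper} invariance of the weight-$4$ wedge under cutting along $\eta$ for quantum length $t$ (from \cite{SHE_WELD}), applied together with a re-drawing of the counterflow lines in $\CWW_t = \CWW \setminus \eta([0,t])$ and the BCLE description of the conditional law of the CLE loops attached to $\eta[0,t]$ from \cite[Fig.~9.2]{cle_percolations}. Your closing paragraph gestures at the imaginary-geometry coupling, but you never mention the weight-$4$ wedge or the zipper, and these are the load-bearing ingredients — slicing $\CW$ (of weight $\gamma^2-2$) directly along $\eta$ via Theorem~\ref{thm:gluing_wedges} does not by itself come with any re-rooting invariance in $t$. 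Finally, for part~\eqref{it:disks_are_disks}, citing Theorem~\ref{thm:gluing_forested_lines} alone is too quick: that theorem only gives the excursions of $\eta'$ away from its outer boundaries, not the full interior of a discovered CLE loop (which also includes the small piece of surface below the outer boundary), and one needs the more careful asymptotic argument of Proposition~\ref{main2} / Proposition~\ref{prop:bcle_on_half_plane} to identify the cut-out jump surfaces as Poisson point processes of generalized quantum disks.
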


\begin {rema} 
The picture will be completed 
in Section~\ref{S51}, where it will be explained why the rate of negative jumps of $R$ and the rate of negative jumps of $L$ are the same. This will in particular show that $p / (1-p) = U_R / U_L$ which then gives the relation between $p$ and $\rho$ (i.e., Theorem~\ref{thm:betarhoformula}). 
\end {rema}

\begin {rema} 
We will state and prove the counterpart of this result for explorations of generalized quantum disks as Theorem~\ref{thm4} later (and the proof will build on the result in half-planes). This will be the result that then can be used to fully explore LQG surfaces via CLE exploration mechanisms. 
\end {rema}

\begin {rema} 
One approach to prove Theorem~\ref{thm3} is to build on Propositions~\ref{main1} and~\ref{main2} for $p=0$ and $p=1$, and to use the following approximation of the interface $\eta$ for general $p$:

Fix $\delta > 0$; toss an independent coin which is heads (resp.\ tails) with probability $p$ (resp.\ $1-p$) and depending on its outcome we let $\eta_\delta$ evolve along the left (resp.\ right) boundary, up until it has traced boundary quantum length $\delta$. We also attach to this path the $\CLE_{\kappa'}$ loops that it has encountered. Then, by Proposition~\ref{main1}, we still have to explore a quantum half-plane $\CH(u)$ (or rather, the wedge $\CW(u)$). 
We then toss a second independent $p$ v.s.\ $1-p$ coin to decide whether the second stretch of the exploration will follow the left or right boundary of the wedge $\CW(u)$. Note that if the outcome of the second coin disagrees with that of the first one, then this interface will start moving in the interior of the initial wedge $\CW$. 
We then continue iteratively, tossing a new independent coin at each time which is a multiple of $\delta$. 

Then, for each given $\eps$ and $K$, as $\delta$ gets smaller and smaller, the probability that during at least one of the first $K/\delta$ stretches, one discovers two $\CLE_{\kappa'}$ loops of diameter greater than $\eps$ goes to $0$. Together with the fact that the clusters of loops of diameter greater than $\eps$ converge to the clusters of loops (this is a non-trivial fact, shown in \cite{cle_percolations}), this shows that the interface $\eta_\delta$ indeed converges (in distribution) to the actual interface $\eta$ (it is actually also possible to couple $\eta_\delta$ with $\eta$ by deciding to color the largest $\CLE_{\kappa'}$ loop encountered by $\eta_{\delta}$ at the $k$th iteration (on $[k\delta, (k+1) \delta]$ quantum time) according to the coin-toss performed at time $k \delta$).

Hence, we see that this side-swapping interface $\eta_\delta$ indeed converges to the real interface $\eta$ as $\d \to 0$, and that the collection of discovered CLE loops converge as well. Given the results that we have derived for $p=0$ and for $p=1$, this suggests that the Poissonian structure of the appearing/disappearing surfaces should still hold, and that the only difference will lie in the fact that the positive jumps will be jumps of $L$ with probability $p$ and jumps of $R$ with probability $1-p$. 

However, to make this type of proof rigorous, some arguments are needed to justify the fact that the boundary lengths (of the cut-out domains for instance) of the approximations converge to those discovered by $\eta$. We will instead follow another route to prove this, similar to the one that we used to prove Propositions~\ref{main1} and~\ref{main2}. 
\end {rema}

\subsection {The BCLE decomposition}
We now briefly recall the BCLE description of the $\CLE_{\kappa'}$ from \cite {cle_percolations}:  
Suppose that $\kappa' \in (4,8)$ and  $\rho' \in (\kappa'/2-4,\kappa'/2-2)$. Recall from Remark~\ref{remBCLE} that a $\BCLE_{\kappa'} (\rho')$ is constructed via the branching tree of $\SLE_{\kappa'} (\rho'; \kappa'-6- \rho')$ processes. In the particular case where $\rho'=0$ (or symmetrically, when $\rho' = \kappa' -6$), this is exactly the picture one obtains when one discovers the boundary-touching $\CLE_{\kappa'}$ loops.

When $I$ is a subinterval of $\R$, we can define the collection of $\BCLE_{\kappa'}$ loops that do touch $I$. By conformal invariance, we can define also $\BCLE_{\kappa'}(\rho')$ in any simply connected domain in the plane.

Suppose that one draws a colored $\CLE_{\kappa'}$ in the upper half-plane and that one traces the entire interface $\eta$ from $0$ to $\infty$. As we have already mentioned several times, it is shown in \cite {cle_percolations} that the law of $\eta$ is that of an $\SLE_{\kappa} (\rho ; \kappa -6- \rho)$. A further result of \cite {cle_percolations} is the description of the conditional law of $\eta'$ given $\eta$ that we now recall. We call $H_L$ and $H_R$ the two domains that lie respectively to the left and to the right of $\eta$ (each of them is the union of disjoint simply connected domains). The $\CLE_{\kappa'}$ loops that touch $\eta$ from the left (i.e., that are in $H_L$) are by construction red, while the ones that touch $\eta$ from the right are blue.   

\begin {prop}[Theorem 7.2 from \cite {cle_percolations}] 
\label {BCLEinCLE} 
If one conditions on the whole of the interface $\eta$, then the collection of $\CLE_{\kappa'}$ loops that touch $\eta$ from the left is conditionally independent from the collection of $\CLE_{\kappa'}$ loops that touch $\eta$ from the right. Furthermore, the conditional law of the $\CLE_{\kappa'}$ loops that touch the right side of $\eta$ is obtained by taking independent $\BCLE_{\kappa'} (\rho_R')$'s in each connected component of $H_R$ for $\rho_R'= - \kappa' (\rho+2) / 4$ , and to keep only those loops that touch $\eta$.    
\end {prop}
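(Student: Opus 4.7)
The plan is to derive the proposition from the imaginary-geometry coupling of \cite{cle_percolations} together with the Markov property of the Gaussian free field. Set up the auxiliary GFF $h^{\IG}$ on $\h$ with boundary value $\lambda' - \pi\chi$ on $\R$ as in Remark~\ref{Rem:IG}, so that the boundary-touching $\CLE_{\kappa'}$ loops are constructed from the branching tree of counterflow lines of $h^{\IG}$. By the coupling results of \cite{cle_percolations}, the trunk interface $\eta$ is realized inside this same field as a flow line of $h^{\IG}$ from $0$ to $\infty$ at a specific angle $\theta = \theta(\rho, \kappa')$ --- equivalently, as the ordinary flow line of $h^{\IG} + \theta \chi$ --- with $\theta$ chosen so that this flow line has the $\SLE_{\kappa}(\rho;\kappa-6-\rho)$ law.

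For the conditional independence claim, condition on the entire curve $\eta$. The standard Markov property for GFF flow lines (\cite{MS_IMAG}) implies that $h^{\IG}|_{H_L}$ and $h^{\IG}|_{H_R}$ are conditionally independent GFFs with boundary data prescribed on $\partial H_{L/R} \cap \R$ by the original boundary values, and on $\partial H_{L/R} \cap \eta$ by the standard flow-line boundary-data rules (a $\chi\theta$ contribution from the angle together with a $\pm\lambda$ jump across the curve). Because the $\CLE_{\kappa'}$ loops touching $\eta$ from the left (resp.\ right) are measurable with respect to $h^{\IG}|_{H_L}$ (resp.\ $h^{\IG}|_{H_R}$) --- they are generated by the sub-branching counterflow trees of these restricted fields rooted at the boundary targets on the appropriate side of $\eta$ --- they are conditionally independent.

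For the identification of the right-side conditional law, compute the boundary values of $h^{\IG}$ along $\partial H_R$. On the portion contained in $\R$ the value is the original $\lambda' - \pi\chi$; on the portion lying along $\eta$ the value is shifted by an explicit constant determined by $\theta$ and $\chi$. Comparing with the BCLE defining data recalled in Remark~\ref{remBCLE}, where boundary value $\lambda'(1+\rho') - \pi\chi$ characterises $\BCLE_{\kappa'}(\rho')$, this shift identifies a unique $\rho_R'$. An arithmetic check with $\lambda' = \pi/\sqrt{\kappa'}$, $\chi = 2/\sqrt{\kappa}-\sqrt{\kappa}/2$, and the formula for $\theta(\rho)$ (chosen to produce the $\SLE_\kappa(\rho;\kappa-6-\rho)$ law of $\eta$) yields $\rho_R' = -\kappa'(\rho+2)/4$. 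Inside each connected component of $H_R$, the sub-tree of counterflow lines of $h^{\IG}|_{H_R}$ rooted at the boundary targets therefore constructs precisely a $\BCLE_{\kappa'}(\rho_R')$, and the original $\CLE_{\kappa'}$ loops touching $\eta$ from the right are exactly the BCLE loops whose trace meets the $\eta$-arc of $\partial H_R$.

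The main obstacle is the boundary-data bookkeeping: one must translate the force-point parameterization $\rho$ of $\eta$ into an unsigned flow-line angle $\theta$, keep track of the side-switching convention of Remark~\ref{sideremark}, and convert the resulting boundary-value shift along $\eta$ back into the BCLE parameter $\rho'$. Once these conversions are fixed, the conditional independence is immediate from the Markov property of the GFF, and the BCLE identification is immediate from the construction recalled in Remark~\ref{remBCLE}; the non-trivial content is concentrated in arriving at the clean closed form $\rho_R' = -\kappa'(\rho+2)/4$ rather than some implicit function of $\rho$.
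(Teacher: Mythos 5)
This statement is not proved in the present paper: it is imported verbatim from \cite{cle_percolations} (as the bracketed attribution ``Theorem 7.2 from \cite{cle_percolations}'' indicates), and the text explicitly says ``We won't use the explicit value of $\rho_R'$ in terms of $\rho$ here.'' So there is no in-paper proof to compare against; you are offering a reconstruction of the proof from the cited reference, and that reconstruction has a concrete gap.

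The central problem is the very first step. You set $h^{\IG}$ to be the GFF of Remark~\ref{Rem:IG}, with constant boundary data $\lambda'-\pi\chi$ on $\R$, whose counterflow lines generate the $\CLE_{\kappa'}$ exploration tree, and then assert that the interface $\eta$ ``is realized inside this same field as a flow line of $h^{\IG}$ from $0$ to $\infty$ at a specific angle $\theta$.'' This cannot hold. First, a conceptual obstruction: the Bernoulli coloring that defines $\eta$ is extra randomness independent of $h^{\IG}$, so $\eta$ is not a measurable functional of $h^{\IG}$ and cannot be any flow line of it. Second, a direct computational obstruction: a flow line of a GFF on $\h$ with \emph{constant} boundary data $c$, from $0$ to $\infty$ at angle $\theta$, is an $\SLE_\kappa(\rho_1;\rho_2)$ with $-\lambda(1+\rho_1)=c-\theta\chi$ and $\lambda(1+\rho_2)=c-\theta\chi$, hence $\rho_1+\rho_2=-2$ regardless of $\theta$; but the interface is an $\SLE_\kappa(\rho;\kappa-6-\rho)$ with $\rho_1+\rho_2=\kappa-6\neq -2$ for $\kappa\in(8/3,4)$. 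So no angle of $h^{\IG}$ produces the required law.

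What actually underlies the cited theorem is a different, nontrivial coupling. One sets up an auxiliary GFF whose boundary data near $0$ jumps by the right amount so that its zero-angle flow line is an $\SLE_\kappa(\rho;\kappa-6-\rho)$; its two counterflow lines at angles $\theta\pm\pi/2$ then trace $\BCLE_{\kappa'}$-type structures on the two sides of $\eta$, and after filling the holes with independent nested $\CLE_{\kappa'}$'s one must \emph{prove} that the full picture is a $\CLE_{\kappa'}$ decorated with an i.i.d.\ Bernoulli($p$) coloring whose red/blue interface is $\eta$. That consistency statement --- the forward GFF construction matches the backward conditional-law description --- is precisely the content of \cite[Theorem~7.2]{cle_percolations}, and is what your sketch implicitly assumes rather than establishes. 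Indeed, the present paper's own proof of Theorem~\ref{thm3}\eqref{it:unexplored} sets up an auxiliary GFF with boundary data $\pm\lambda$ (not $\lambda'-\pi\chi$), takes $\eta$ to be its zero-angle flow line, and then \emph{invokes} the cited proposition (via \cite[Figure~9.2]{cle_percolations}) to identify the conditional law of the CLE pieces along $\eta$. Relying on that coupling to prove the proposition would be circular. Your conditional-independence and boundary-data-bookkeeping steps would be reasonable \emph{once} the correct auxiliary GFF coupling is in place, but the existence and correctness of that coupling is where the real work lies and is missing from your argument.
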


We won't use the explicit value of $\rho_R'$ in terms of $\rho$ here (but of course, determining the value of $\rho$ is actually one of our goals). The symmetric statement holds for the loops that touch $\eta$ from the left, changing $\rho$ into $\kappa -6- \rho$ to get the formula for $\rho_L'$. 

Similarly, \cite {cle_percolations} contains a description of the conditional law of the $\CLE_{\kappa'}$ loops when one conditions on $\eta$ up to a stopping time $\tau$. In that case, 
the loops touching $\eta$ from the left and from the right are not conditionally independent anymore (to start with, they have to remain disjoint), but the previous proposition provides a recipe to construct them: First sample the rest of $\eta$, and then apply Proposition~\ref{BCLEinCLE}. Using the imaginary geometry setup, one can then directly view the joint law of these interface-touching loops without tracing the rest of $\eta$. We will use this description at some point in our proof (the precise result from \cite{cle_percolations} will be easier to state then).

We are now ready to prove Theorem~\ref{thm3}: 
The next three sections will be devoted to the proofs of~\eqref{it:unexplored}, \eqref{it:bl_proc}--\eqref{it:jump_ratio} and \eqref{it:disks_are_disks}, respectively.

\subsection {Proof of stationarity} 

We are going to follow a similar strategy than for our proof when $p=1$.  It is this time convenient (we hope it will become immediately clear why) to start off with a weight $4$ quantum wedge $\CWW = (\h,h,0,\infty)$.  We let $\eta$ be an independent $\SLE_\kappa$ on $\h$ from $0$ to $\infty$.  We can view $\eta$ as the zero angle flow line of a GFF $h^\IG$ on $\h$ with boundary conditions given by  $\lambda = \pi / \sqrt {\kappa}$ on $\R_+$ and  $-\lambda$ on $\R_-$.
Then we know that the law of $\CWW$ is invariant under the operation of cutting along $\eta$ for a given amount of quantum length and then conformally mapping back to $\h$ (this is the $\rho_1=\rho_2=0$ case of Theorem~\ref{thm:gluing_wedges}, which is also the basic quantum zipper result from the earlier paper \cite{SHE_WELD}) -- see Figure \ref{f15} for a sketch.

\begin{figure}[ht!]
\includegraphics[width=5cm]{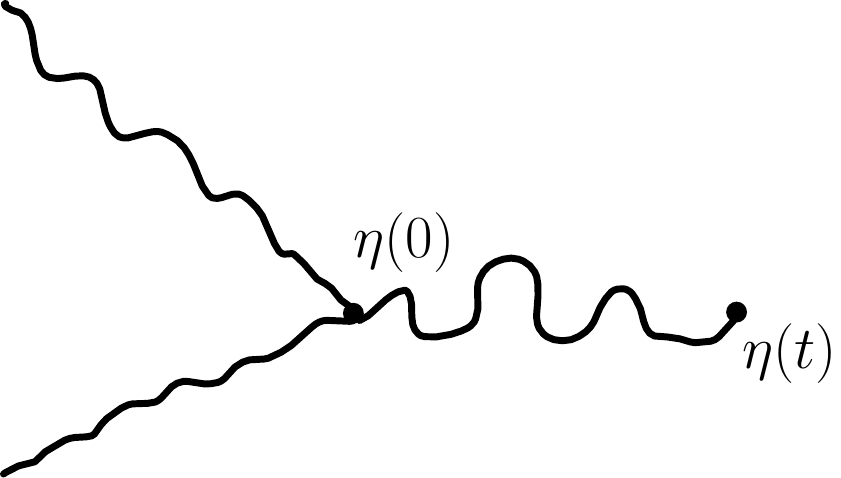}   
	\caption{Both $\CWW$ and $\CWW_t= \CWW \setminus \eta [0,t]$ (to the right of the drawn paths) are wedges of weight $4$ (it would be more natural to view $\CWW$ as the complement of a path, but for representation purposes, we draw it more like a wedge).}
	
	\label{f15}
\end{figure}

The idea is now to construct the weight $W_D = \gamma^2-2$ wedge $\CW$ (and the generalized half-plane $\CH$) within $\CWW$ in such a way that $\eta$ will be the $\SLE_\kappa (\rho; \kappa-6-\rho)$ process that slices through its beads. 

This is naturally done in the imaginary geometry framework of \cite{MS_IMAG} (we will use the notation $\chi = 2/\gamma-\gamma/2$ and $Q  = 2/\gamma+ \gamma/2$)  as follows: 
Define 
\[ \theta_0 = \frac{2\pi(\gamma^2-2)}{4-\gamma^2} = \frac{2\pi(\kappa-2)}{4-\kappa}\]
and choose $\theta \in (0, \theta_0)$ (note that $\theta_0 > 0$ since $\kappa' \in (4,8)$ so that $\gamma \in (\sqrt{2},2)$).  Let $\eta_L$ (resp.\ $\eta_R$) be the flow line of $h^\IG$ with respective angles  $\theta$ and $\theta-\theta_0$.

Since $\theta - \theta_0 < 0 < \theta$, the imaginary geometry results from \cite{MS_IMAG} show that $\eta$ is squeezed in between $\eta_R$ and $\eta_L$, and that $\eta_R$ and $\eta_L$ intersect. Moreover (see Figure \ref {f16}) conditionally on $\eta_L$ and $\eta_R$, the path $\eta$ turns out to be an $\SLE_\kappa (\rho; \kappa-6-\rho)$ process slicing through the connected components that are in between $\eta_L$ and $\eta_R$ (which is why we chose $\theta_0$ like this), where 
\[ \rho = \frac{\theta \chi}{\lambda} - 2 = \frac{\theta}{\pi} \left(2-\frac{\kappa}{2}\right) - 2.\]
We can note that when $\theta$ varies from $0$ to $\theta_0$, then $\rho$ spans through all of the values between $-2$ and $\kappa -4$.

\begin{figure}[ht!]
\includegraphics[width=12cm]{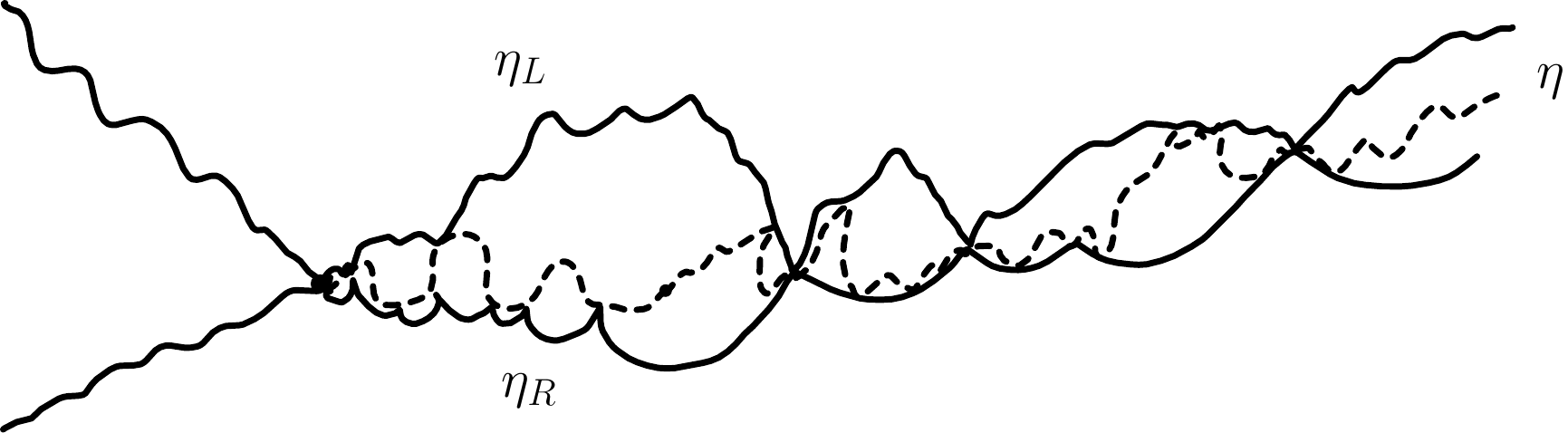}   
	\caption{The conditional law of $\eta$ given $\eta_L$ and $\eta_R$ is an $\SLE_\kappa (\rho ; \kappa-6- \rho)$ in the wedge ${\mathcal W}$ between these two curves.}
	
	\label {f16}
\end{figure}

We can furthermore consider  the counterflow line $\eta_L'$ of $h^\IG + (\theta+\pi/2) \chi$ from $\infty$ to $0$.  This is the counterflow line chosen so that  its  right boundary is $\eta_L$ (indeed, it is the flow line of $h^\IG + (\theta+\pi/2) \chi$ with angle $-\pi/2$). Similarly, the left boundary of the counterflow line $\eta_R'$ of $h^\IG + (\theta-\theta_0-\pi/2)\chi$ is equal to $\eta_R$.

\begin{figure}[ht!]
\includegraphics[width=12cm]{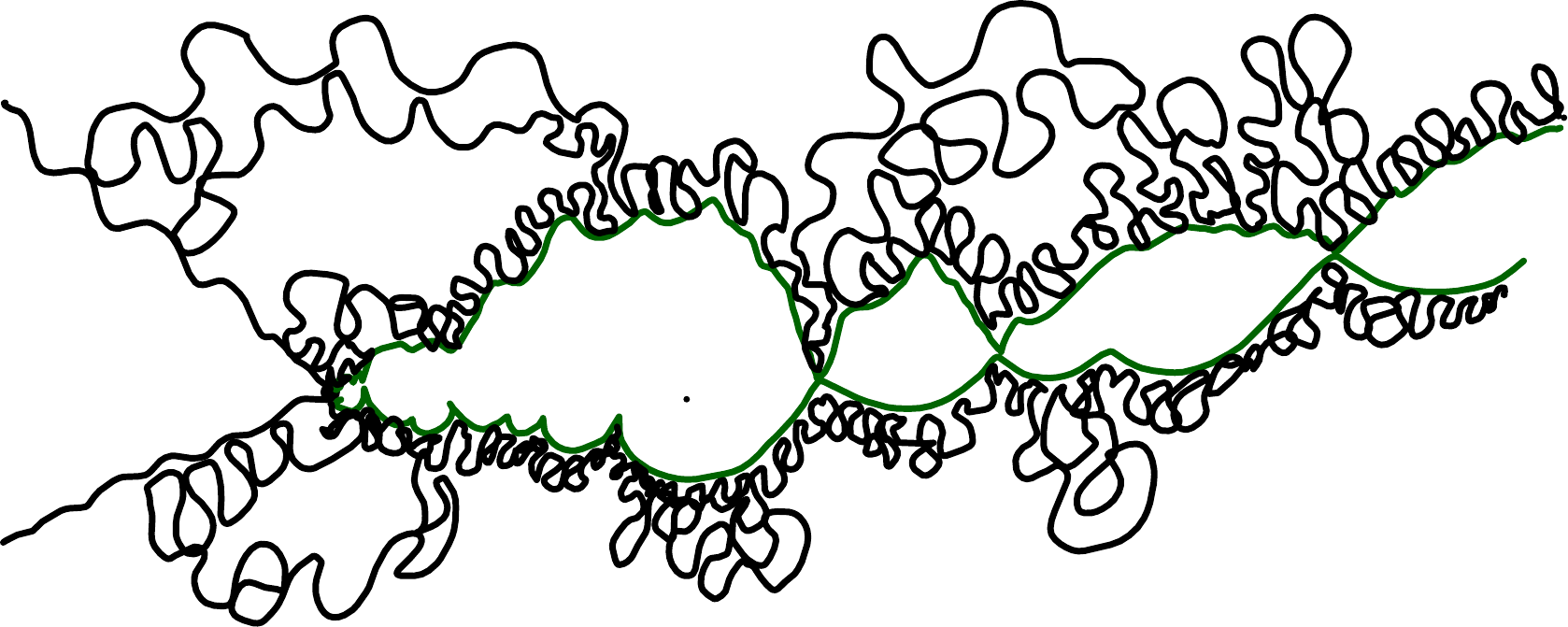}   
	\caption{The part between $\eta_L'$ and $\eta_R'$ is the generalized half-plane ${\mathcal H}$ }
	
	\label{f17}
\end{figure}

We can now combine these curves with the independent LQG structure of $\CWW$. Our choice of parameters (that ensure that the joint law of $\eta_R$ and $\eta_L$, i.e., the angle difference $\theta_0$, is the right one for this) and Theorem~\ref{thm:gluing_wedges} show that the quantum surface parameterized by the bubbles which are between $\eta_R$ and $\eta_L$ is a quantum wedge of weight 
\[ W = \frac{\theta_0}{\pi} \chi \gamma =  \gamma^2-2\]
(see \cite[Table~1.1]{dms2014mating} for this relationship between wedge weight and imaginary geometry angle). This will be our quantum wedge~$\CW$.

Furthermore, the quantum surface in-between $\eta_L'$ and $\eta_R'$ (see Theorem~\ref{thm:gluing_wedges'} and the remarks after that) is then a doubly-forested wedge of weight $W_D=\gamma^2-2$ -- this will be our generalized quantum half-plane~$\CH$, see Figure \ref {f17}.

We now parameterize $\eta$ according to its quantum length, and fix some positive $t$. Then, as we have already pointed out, 
$(\h \setminus \eta([0,t]), h, \eta(t), \infty)$ is again a quantum wedge of weight $4$. 
We let $f_t \colon \h \setminus \eta([0,t]) \to \h$ be the centered Loewner map.  Then the quantum surface $(\h, h_t, 0, \infty)$ with $h_t = h \circ f_t^{-1} + Q \log|(f_t^{-1})'|$ is a weight-$4$ quantum wedge.
On the other hand,  $\wh h_t^\IG = h^\IG \circ f_t^{-1} - \chi \arg( (f_t^{-1})')$ has the same law as $h^\IG$. 
We then define $\wh \eta_{L,t}'$ (resp.\ $\wh \eta_{R,t}'$) from $\wh h_t^\IG$ in the same way in which $\eta_L'$ and $\eta_R'$ are defined from $h^\IG$ (i.e., they are the counterflow line from $\infty$ to $0$ of $\wh h_t^\IG + (\theta+\pi/2)\chi$ and $\wh h_t^\IG + (\theta-\theta_0-\pi/2)\chi$).

In the same way as for $t=0$, the quantum surface parameterized by the components of $\h \setminus (\wh \eta_{L,t}' \cup \wh \eta_{R,t}')$ which are between the right boundary of $\wh \eta_{L,t}'$ and the left boundary of $\wh \eta_{R,t}'$ form a quantum wedge $\wh \CW(t)$ of weight $\gamma^2-2$, and if we include the forested lines corresponding to the loops of $\wh \eta_{L,t}'$ (resp.\ $\wh \eta_{R,t}'$) on the right (resp.\ left), then we get a generalized quantum half-plane $\wh \CH(t)$ as before.
We can map these surfaces forward via $f_t^{-1}$ (so the change-of-domain formula gives again the field $h$) to obtain representatives of this generalized quantum half-plane and this generalized quantum wedge -- we denote them by $\wt \CH(t)$ and $\wt \CD(t)$. 

We now want to argue that $\wt \CH(t)$ has the same law as $\CH(t)$ as defined in the theorem, i.e., by first sampling $\CW$ and $\CH$, then sampling a $\CLE_{\kappa'}$ in $\CW$, and exploring along its interface up to the quantum natural time $t$ and attaching all discovered $\CLE_{\kappa'}$ loops . Both $\CH(t)$ and $\wt \CH(t)$ are obtained by cutting out pieces from $\CH$ that are independent of $h$, so it suffices to focus on the geometry of these pieces. 
Let us now define   
$\wt{\eta}_{L,t}' = f_t^{-1}(\eta_{L,t}')$ and $\wt{\eta}_{R,t}' = f_t^{-1}(\eta_{R,t}')$.  Then $\wt{\eta}_{L,t}'$ agrees with $\eta_L'$ until it first hits the left side of $\eta([0,t])$ after which it branches toward $\eta(t)$.  Likewise, $\wt{\eta}_{R,t}'$ agrees with $\eta_R'$ until it first hits the right side of $\eta([0,t])$ after which it branches towards $\eta(t)$, see Figure \ref{f18}. 

\begin{figure}[ht!]
\includegraphics[width=12cm]{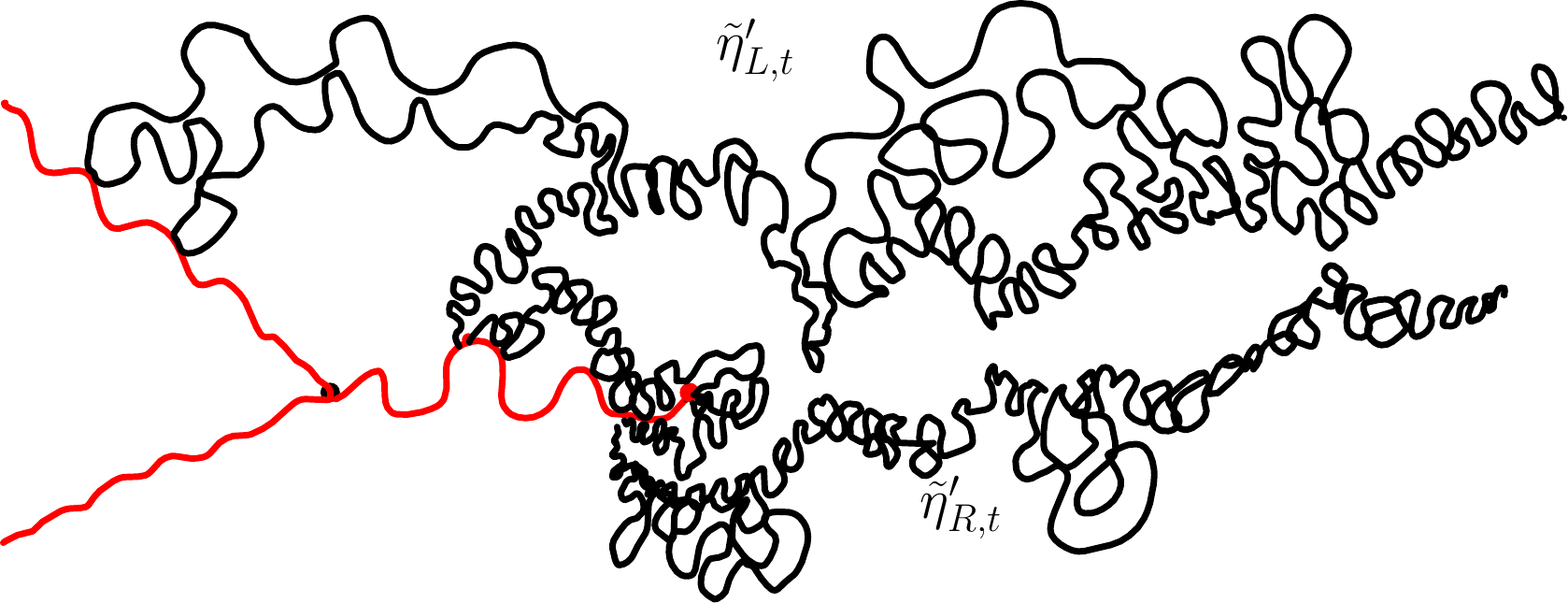}  
	\caption{The paths $\wt{\eta}_{L,t}'$ and ${\wt {\eta}_{R,t}'}$ agree with $\eta_L'$ and $\eta_R'$ up until they hit $\eta[0,t]$. They define a quantum half-plane in $\CWW_t$ in the same way in which $\eta_{L}'$ and ${\eta_R'}$ defined the generalized half-plane ${\mathcal H}$ in $\CWW$.}
	
	\label {f18}
\end{figure}

So, we can interpret the parts of these paths before these branching times as defining the boundaries of the initial generalized half-plane $\CH$ up until (and including) that of the ``currently explored disk'' $\CD(t)$ (in which $\eta(t)$ is).  

If we sample a colored $\CLE_{\kappa'}$ in this disk, we know that $\eta$ can be viewed as the interface from one of its marked points to the other. Now is the time where we will use the imaginary geometry type description from \cite {cle_percolations} of the conditional distribution of the $\CLE_{\kappa'}$ loops that touch $\eta$ up to some finite time.  Indeed, it precisely says (see \cite[Figure~9.2]{cle_percolations} and the surrounding text), that conditionally on $\eta$ up to a stopping time (and here $t$ can be viewed as a stopping time as it involves the conditionally independent field $h$), the outermost pieces of $\CLE_{\kappa'}$-loops attached to it have exactly the same joint law of  the parts of $\wt \eta_{L,t}$ and $\wt \eta_{R,t}$ after the splitting (see Figure \ref {f19}). This proves exactly that the conditional law of $\CH(t)$ is that of $\wt \CH(t)$.   
Indeed,  $\wt{\eta}_{L,t}'$ (resp.\ $\wt{\eta}_{R,t}'$) is the counterflow line of $h^\IG+c_L$ (resp.\ $h^\IG + c_R$) in $\CD(t) \setminus \eta([0,t])$ starting from the first point on $\CD(t)$ visited by $\eta$ and targeted at $\eta(t)$ where
\[ c_L =  \lambda' - \lambda + \theta \chi = (\theta-\pi/2)\chi \quad\text{and}\quad c_R = \lambda - \lambda' + (\theta-\theta_0) \chi = (\theta-\theta_0 +\pi/2)\chi\]
(The reason that $c_L = (\theta-\pi/2)\chi$ and not $(\theta+\pi/2)\chi$ is that in the above it is growing from the bottom of $\CD(t)$ rather than the top, which corresponds to a change of angle of $\pi$ in the clockwise direction.  Recall that a change of angle $\pi$ in the clockwise direction corresponds to a lowering of field heights by $\pi \chi$.  This also explains the value of $c_R$.)

\begin{figure}[ht!]
\includegraphics[width=12cm]{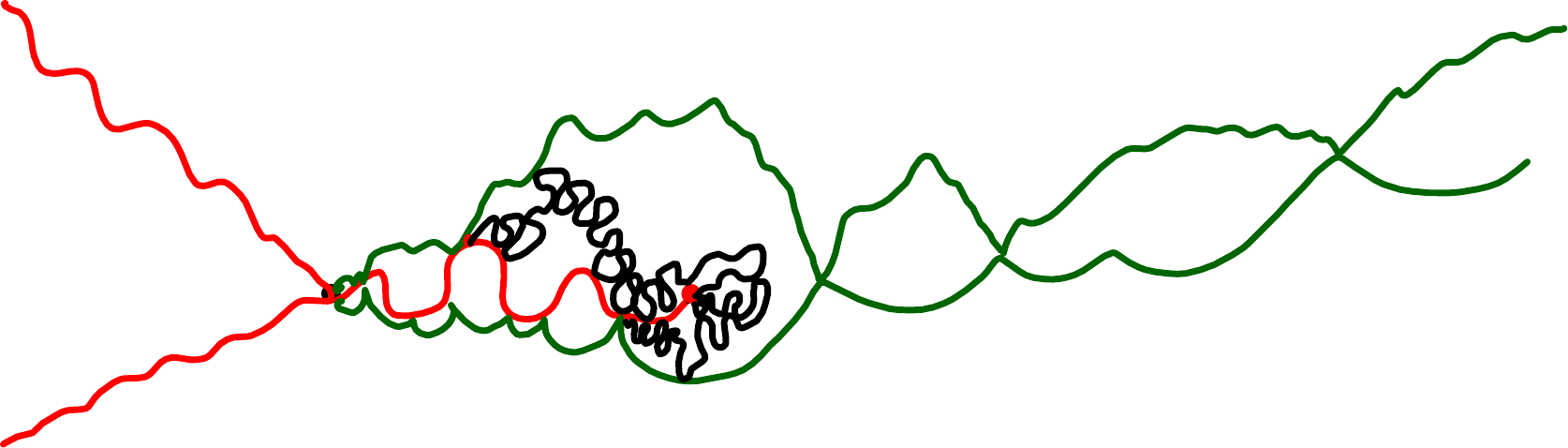}   
	\caption{The paths $\wt {\eta}_{L,t}' \setminus \eta_L'$ and $\wt {\eta}_{R,t}' \setminus \eta_R'$ can be interpreted as the contours of the colored $\CLE_{\kappa'}$ loops in $\CW$ that contribute to the boundary of $\CW(t)$.}
	
	\label {f19}
\end{figure}

Finally, to see that $\CW(t)$ is independent of $\CF_t$, one proceeds as in the case $p=1$, noting that on the one hand, $\wt \CH(t)$ and the two quantum surfaces that lie on the other ``sides'' of $\wt \CH(t)$ of $\eta_{R,t}'$ and of $\eta_{L,t}'$ are three independent surfaces, and using the $\BCLE_{\kappa'} (\rho')$ construction of \cite{cle_percolations} that shows that the cut out domains can be constructed by drawing appropriate SLE-type curves (to complete the BCLE branching trees) in those two other quantum surfaces. \qed

\subsection {L\'evy processes and their jumps}
We now prove the statements \eqref{it:bl_proc}--\eqref{it:jump_ratio} of Theorem~\ref{thm3}. 

Just as in the case $p=1$, statement (i) implies that $L$ and $R$ are both  $\alpha'$-stable L\'evy processes. Let us now explain why they are independent: In the previous setup, when one first samples all of $\eta$ in the wedge $\CWW$ of weight $4$, then it splits it into two independent wedges of weight $2$. The processes $R$ and $L$ can then be read off from what happens 
on either side of $\eta$ by drawing independent BCLE processes on each of the sides (this is one of the main results of \cite {cle_percolations}; we will come back to this description in the next section), so that they are necessarily independent as well. 

We can note that up until the interface hits a certain $\CLE_{\kappa'}$ loop, it is actually independent of the color of that loop. That loop will therefore correspond to a positive jump of $L$ or of $R$ with respective probabilities $p$ and $1-p$. This indicates that the ratio between the rate of positive jumps of $L$ and the rate of positive jumps of $R$ is $p/ (1-p)$.   

We now turn to the ratio $U_R$  between the rates of positive and negative jumps of $R$ (by symmetry, the corresponding ratio for $L$ is then obtained by changing $\rho$ into $\kappa-6- \rho$). We use the same idea as in the case $p=1$: We know that $R$ is an $\alpha'$-stable process, and it is then a standard result (see \cite[Chapter~VIII, Lemma 1]{bertoin96levy}) for such processes that if we set $R^{\#}_t := \inf_{s \leq t} R_s$, then the set ${\mathcal R} := \{ - R^{\#}_t, t \ge 0 \}$ can be viewed as the range of a stable subordinator, whose index $1/ \alpha''$ can be explicitly written in terms of $\alpha'$ and $U_R$. In particular, 
$$ U_R = \frac {\sin (\pi ( \alpha' - \alpha''))}{\sin (\pi \alpha'')}$$ 
(as above, this is a slight reformulation of  \cite[Chapter~VIII, Lemma~1]{bertoin96levy}, with $\alpha''$ given by $\alpha'$ times the positivity parameter of $-R$).

But here, we have another way to express this jump distribution via the weight of the wedges that are obtained by slicing the wedge $\CW$ by the interface $\eta$. We know that the wedge $\CW_R$ to the right of $\eta$ has weight $W=\kappa- 4 - \rho$. By Remark~\ref{scalingremark}, we get a Poisson point process of generalized boundary lengths of intensity
$$dy / y^{2 + (\rho/2) - (4/\kappa') + 1}.$$
Just as in the case $p=1$ (and this is the main observation here!), we note that the minimum of $R$ is reached only at the times at which $\eta$ touches the right-hand boundary of $\CW$, and these points correspond exactly to the point ``between'' the beads of $\CW_R$. Furthermore, the corresponding jumps will have the same scaling property as the generalized boundary lengths of these beads.
Hence, we get that 
\[\alpha'' =  2 + \frac \rho 2  - \alpha'.\]

Plugging this relation into the expression for $U_R$, we finally get 
\[  U_R = \frac{\sin(2\pi \alpha' - \pi \rho/2)}{\sin(\pi \rho/2- \pi \alpha')}.\]
\qed

In what follows, we will denote by  $A_+ = A_+(p) $ and $A_- = A_- (p)$ the rates of positive jumps and the rates of negative jumps (when compared to a standard stable subordinator) of the $\alpha'$-stable process $R + L$. We will also denote by $A_{+,R}(p)$, $A_{-,R}(p)$, $A_{+,L} (p)$ and $A_{-,L} (p)$ the rates of positive and negative jumps of $R$ and $L$ respectively. We then have (either from the definition or from Theorem \ref {thm3}\eqref{it:bl_proc}--\eqref{it:jump_ratio}) that $A_{+,L} (p) = p A_+(p)$, $A_{+,R}(p) = (1-p) A_+(p)$, 
$A_{+,R} (p) / A_{-,R} (p) = U_R$,  $A_{+,L} (p) / A_{-,L}(p) = U_L$ and $A_{-,R} (p) + A_{-,L}(p) = A_-(p)$. 

\subsection{Cut-out domains are quantum disks}
We now turn to prove part~\eqref{it:disks_are_disks} of Theorem~\ref{thm3}. 

We start with proving a general statement on BCLEs on quantum half-planes, which is the $\BCLE_{\kappa'} (\rho')$ analog of Proposition~\ref{main2}. It can also be viewed as the analog for non-simple BCLEs of the corresponding result \cite[Proposition~4.4]{CLE_LQG} (though in \cite{CLE_LQG} it is stated for a quantum disk, but the result also holds in the setting of the quantum half-plane).

Suppose that $\kappa' \in (4,8)$ and  $\rho' \in (\kappa'/2-4,\kappa'/2-2)$.  We now draw such a $\BCLE_{\kappa'} (\rho')$ on top of a quantum half-plane ${\mathcal H}$, and we then define the discovery process of the $\BCLE_{\kappa'}(\rho')$ loops in exactly the same way as the $\CLE_{\kappa'}$ discovery (in the $p=1$ case). For each $u \ge 0$, we define the quantum surface ${\mathcal H}(u)$ that is ``outside'' the union of all BCLE loops that are touching $(-\infty, x(u)]$. We then define $\wt L$ in exactly the same way in which $L$ was defined out of a $\CLE_{\kappa'}$ in Section~\ref{S3.1}. The positive jumps of $\wt L$ correspond to the discovery of a boundary-touching BCLE loop. 

\begin{prop}
\label{prop:bcle_on_half_plane} The quantum surfaces that are cut out at the positive and negative jumps of $\wt L$ are two independent Poisson point processes of generalized quantum disks.
\end{prop}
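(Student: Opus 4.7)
The plan is to mirror the proofs of Propositions~\ref{main1} and~\ref{main2}, replacing the $\SLE_{\kappa'}(\kappa'-6)$ branching tree by the $\SLE_{\kappa'}(\rho';\kappa'-6-\rho')$ branching tree; indeed, the case $\rho'\in\{0,\kappa'-6\}$ is exactly Proposition~\ref{main2}. Concretely, I would build the $\BCLE_{\kappa'}(\rho')$ from the branching tree $\{\eta_x':x\in\R\}$ of $\SLE_{\kappa'}(\rho';\kappa'-6-\rho')$ processes from $\infty$ to $x\in\R$, using the imaginary-geometry boundary data of Remark~\ref{Rem:IG}. Applying Theorem~\ref{thm:gluing_wedges'} to the weight-$2$ quantum half-plane with $\rho_1'=\rho'$ and $\rho_2'=\kappa'-6-\rho'$ identifies the region to the right of $\eta_0'$ as a forested quantum wedge of weight $W_2:=2-\gamma^2/2-\gamma^2\rho'/4$; this will be $\mathcal{H}(0)$.

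I would then establish the Markov/stationarity step exactly as in Section~\ref{subsec:ig}: for every $u\geq 0$, $\mathcal{H}(u)$ (the region to the right of $\eta_{x(u)}'$) is again a forested wedge of weight $W_2$, independent of the $\sigma$-algebra $\widetilde{\mathcal{F}}_u$ generated by the cut-out surfaces in $[0,u]$. This uses translation-invariance of the quantum half-plane along its boundary together with target-invariance of the $\SLE_{\kappa'}(\rho';\kappa'-6-\rho')$ branching tree. Once this is in place, $\widetilde L$ is automatically an $\alpha'$-stable L\'evy process, and the cut-out surfaces at its jump times form marked point processes on which one can read the PPP statement.

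For the positive jumps (discovered boundary-touching BCLE loops), I would apply Lemma~\ref{lem:sle_outer_boundary} and Theorem~\ref{thm:gluing_wedges} to decompose the quantum half-plane along the left boundary $\eta_L$ and the right boundary $\eta_R$ of $\eta_0'$. The key robust fact, which is independent of $\rho'$, is that the middle wedge between $\eta_L$ and $\eta_R$ has weight $2-\gamma^2/2$ (the $\rho'$-dependence cancels between the two outer wedges), and that conditional on $(\eta_L,\eta_R)$ the curve $\eta_0'$ is an $\SLE_{\kappa'}(\kappa'/2-4;\kappa'/2-4)$ inside it. Theorem~\ref{thm:gluing_forested_lines} then provides two independent Poisson point processes of generalized quantum disks as the excursions of $\eta_0'$ on either side, and the scaling/conditioning argument of Proposition~\ref{main2} (fix $u,\delta>0$, condition on the event $E_u$ that a loop with outer generalized length $\geq\delta$ is discovered, let $u\to 0$) identifies the cut-out surfaces at positive jumps as generalized quantum disks.

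The negative jumps correspond to the forested structure built into $\mathcal{H}(0)$ itself: as $x(u)$ advances past an attachment point of the weight-$W_2$ forested wedge, a generalized disk from the PPP defining the foresting of $\mathcal{H}(0)$ detaches from $\mathcal{H}(u)$. By the definition of a forested wedge and Remark~\ref{scalingremark}, this is itself a Poisson point process of generalized quantum disks, and its independence from the positive-jump PPP is immediate because the foresting is sampled independently of both the spine wedge and of the BCLE loops drawn inside it. The main obstacle will be this last step: one has to carefully match, in the correct left-to-right order, the pinch-points revealed by the BCLE exploration with the bead-plus-foresting structure built into the definition of $\mathcal{H}(0)$, so that each bead of the thin spine together with the generalized disks forested onto it is peeled off in a single downward jump of $\widetilde L$. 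The remaining bookkeeping (precise intensities, side conventions) is routine once the $\rho'=\kappa'-6$ case from Section~\ref{S3} is available as a template.
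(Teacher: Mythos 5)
Your overall strategy matches the paper's: represent the $\BCLE_{\kappa'}(\rho')$ via the branching tree of $\SLE_{\kappa'}(\rho';\kappa'-6-\rho')$ counterflow lines of an imaginary-geometry GFF, establish that $\mathcal{H}(u)$ is a forested wedge of fixed weight $W_2$ independent of $\widetilde{\mathcal{F}}_u$, and then run the $u\to 0$ conditioning argument of Proposition~\ref{main2}. You also correctly isolate the one $\rho'$-robust fact that makes the transfer from Proposition~\ref{main2} work, namely that conditional on the two flow lines of angle $\pm\pi/2$ from $x$ to $\infty$ (i.e., on the left and right boundaries of $\eta_x'$) the curve $\eta_x'$ is an $\SLE_{\kappa'}(\kappa'/2-4;\kappa'/2-4)$ in the intermediate weight-$(2-\gamma^2/2)$ wedge --- this is exactly the observation the paper's proof hinges on (note that the one-sided conditioning on $\eta_L$ alone, which suffices in Proposition~\ref{main2} when $\rho_2'=\kappa'-6$, no longer gives a $\rho'$-independent conditional law, so conditioning on both sides is essential here). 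For the positive jumps your plan is therefore the same as the paper's.

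Where you diverge is the treatment of the negative jumps. You attempt to identify the cut-out pieces at downward jumps of $\widetilde L$ directly and globally with the Poisson point process of generalized disks forming the foresting of $\mathcal{H}(0)$, and then worry (correctly) about ``matching pinch-points'' with the bead-plus-foresting structure. That worry is a genuine gap: the surfaces that actually detach as $x(u)$ advances are not the individual foresting disks of $\mathcal{H}(0)$ but loop-trees assembled from beads of the spine, the foresting disks hanging off $\eta_{0,R}$, and the fjords of already-discovered $\BCLE$ loops, and there is no immediate structural bijection between single downward jumps and single elements of the foresting PPP. The paper does not attempt such a bijection; it simply re-runs the same $u\to 0$ conditioning argument on the right side (using $\eta_0'$ together with its right boundary $\eta_{0,R}$, exactly as in the downward-jump paragraph of the proof of Proposition~\ref{main2}): conditionally on a large downward jump before time $u$, the cut-out surface is a large excursion of $\eta'$ away from $\eta_{0,R}$ (a generalized quantum disk, by Theorem~\ref{thm:gluing_forested_lines}) together with a piece whose area vanishes as $u\to 0$. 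You should replace your structural-matching step by this limiting argument, which is robust in $\rho'$ and avoids the bookkeeping you flag as the ``main obstacle.''
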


\begin{proof}
We can use the representation of the BCLE in terms of counterflow lines $\eta_x'$ of a GFF $h^\IG$ on $\h$ with boundary conditions given by $-\lambda'(1+\rho')+\pi \chi$. 
The proof of the result then follows in exactly the same manner as the proof of Proposition~\ref{main2}. Indeed, if for any $x \in \R$ we condition on the flow lines of $h^\IG$ from $x$ to $\infty$ 
with angles $\pm \pi/2$ the conditional law of $\eta'$ between the two flow lines is an $\SLE_{\kappa'}(\kappa'/2-4;\kappa'/2-4)$.   
\end{proof}

We can now deduce part~\eqref{it:disks_are_disks} of Theorem~\ref{thm3} using Proposition \ref {BCLEinCLE}. In the context of the proof of part~\eqref{it:unexplored} of Theorem~\ref{thm3} given above, the $\SLE_\kappa$ process $\eta$ that we started with drawn on top of an independent quantum wedge $\CW$ of weight $4$ divides it into independent wedges $\CW_1,\CW_2$ of weight $2$ (i.e., quantum half-planes).  The counterflow line exploration on the left and right sides of $\eta$ described in the proof of part~\eqref{it:unexplored} of Theorem~\ref{thm3} corresponds to drawing a $\BCLE_{\kappa'}(\rho_L')$ and an independent $\BCLE_{\kappa'}(\rho_R')$ respectively in $\CW_1$ and $\CW_2$ --  the actual values of $\rho_L'$ and $\rho_R'$ in terms of $\rho$  do not really matter for our purpose here. This already shows that the Poisson point processes of quantum surfaces associated to the jumps of $R$ is independent from the Poisson point processes of quantum surfaces associated to the jumps of $L$. 
The jumps of $L$ and the jumps of $R$ then correspond to the jumps of the discovery process of those $\BCLE$, and we can therefore apply Proposition~\ref{prop:bcle_on_half_plane} to conclude.
\qed

\section {Explorations of generalized quantum disks and consequences} 

\label {S5}
The proofs of the statements in this section are  almost identical to the corresponding ones in \cite{CLE_LQG} for simple CLEs on LQG. We will therefore only quickly browse through the results, referring the reader to \cite{CLE_LQG} for the proofs. 

\subsection {New jump rates and branching tree structure} 
\label {S51}
We now consider a generalized quantum disk $\CD$ with a generalized boundary-typical marked point $x_0$ (chosen uniformly according to the 
generalized boundary length measure). We sample an independent colored $\CLE_{\kappa'}$ inside each of its beads, and we start exploring the interface between red and blue loops starting from $x_0$. This interface $\eta$ is defined in the same way as for the exploration of a generalized quantum half-plane, that one again chooses to parameterize according to its quantum length.  One discovers in the same way the $\CLE_{\kappa'}$ loops that $\eta$ intersects, and defines in the same way the decreasing family of quantum surfaces $\CD(t)$, except that one has to make the following branching rule (to replace the marked target point at infinity of the generalized quantum half-plane): Whenever the trunk disconnects the remaining to be discovered domain into two pieces, the process chooses to go into the direction of the piece with largest generalized boundary length. 

In this way, one has a curve $\eta(t)$ defined up to some stopping time $T$ (corresponding to the time at which the boundary length of the remaining to be discovered domain vanishes). At time $t$, the generalized boundary length of the remaining to be discovered domain $\CD(t)$ will be denoted by $\Lambda_t$. This process will make a positive jump whenever $\eta$ discovers a blue or a red loop, and it will make a negative jump whenever it ``disconnects'' $\CD(t)$ into two pieces (in the same way as for the exploration of generalized half-planes). 

The results for such explorations can be summarized as follows.

\begin {theo} 
\label {thm4} 
The following hold for this exploration of colored $\CLE_{\kappa'}$ on an independent generalized quantum disk. 
\begin {itemize} 
 \item For each $t \ge 0$, conditionally on $(\Lambda_s)_{s \le t}$, all cut out domains, the interior of the discovered loops and the quantum surface $\CD(t)$ are independent generalized quantum disks (with lengths respectively given by the corresponding jumps of $\Lambda$ and by $\Lambda_t$).
 \item The process $(\Lambda_t)_{t \ge 0}$ is a pure jump-process with rates of jumps (with respect to Lebesgue measure) when $\Lambda_t = \ell$ given by $\nu_\ell (l) dl$ on $\R$, where for all $l \ge 0$,  
 $$ \nu_\ell(l) :=  \nu (\ell, \ell +l) := A_+(p) \frac {\ell^{\alpha'+1}}{l^{\alpha' +1} (\ell +l)^{\alpha'+1}}$$ 
 and 
 $$ \nu_\ell (-l) := \nu (\ell, \ell -l) :=  A_-(p)   \frac {\ell^{\alpha'+1} 1_{ l < \ell / 2}}{l^{\alpha' +1} (\ell -l)^{\alpha'+1}} .$$
\end {itemize}
\end {theo}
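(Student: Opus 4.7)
The plan is to transfer the results for the generalized quantum half-plane (Theorem~\ref{thm3}) to the generalized quantum disk setting, following closely the strategy developed in \cite{CLE_LQG} for the simple-CLE case. For the first bullet, the key input is that a generalized quantum disk looks locally like a generalized quantum half-plane near a boundary-typical point. One may therefore couple the disk exploration for a short time with the half-plane exploration of Theorem~\ref{thm3}, so that the conditional independence statement and the identification of the cut-out pieces, the discovered loop interiors, and the remaining surface $\CD(t)$ as independent generalized quantum disks transfer from the half-plane setting. Iteration together with the ``follow-the-larger-piece'' branching rule then propagates the property to all times $t \in [0,T]$.

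For the explicit jump kernel, Theorem~\ref{thm3} tells us that in the half-plane the bilateral $\alpha'$-stable process $L+R$ has L\'evy measure $A_+(p)\,l^{-\alpha'-1}\mathbf{1}_{l>0}\,dl + A_-(p)\,|l|^{-\alpha'-1}\mathbf{1}_{l<0}\,dl$, with each jump accompanied by an independent generalized quantum disk of the corresponding boundary length. The disk exploration $\Lambda_t$ is then obtained from this bilateral stable process via a Lamperti-type procedure of conditioning to die at $0$ (using the natural infinite measure $y^{-\alpha'-1}P_y\,dy$ on generalized quantum disks of boundary length $y$) together with the ``follow-the-larger-piece'' rule at each negative jump. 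A direct computation yields the kernel: the positive-jump rate from state $\ell$ to $\ell+l$ is $A_+(p)\,l^{-\alpha'-1}$ multiplied by the Radon--Nikodym factor $\ell^{\alpha'+1}/(\ell+l)^{\alpha'+1}$ coming from the ratio of the natural disk measures at the relevant boundary lengths, and similarly for negative jumps one obtains $A_-(p)\,l^{-\alpha'-1}\,\ell^{\alpha'+1}/(\ell-l)^{\alpha'+1}$, with the indicator $\mathbf{1}_{l<\ell/2}$ encoding the branching rule that forces the exploration to continue in the larger of the two pieces produced by a disconnection.

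The delicate step, and the main obstacle, is to rigorously justify the absolute continuity between the disk and half-plane explorations and to identify the Radon--Nikodym derivative with the explicit ratio of disk measures above. This is precisely the analytic content handled in \cite{CLE_LQG} for the simple-CLE case; as highlighted in the introduction, in the present setting $\alpha' \in (1/2,1)$ so that no L\'evy compensation is required, and the arguments of \cite{CLE_LQG} should transfer essentially verbatim, with Theorem~\ref{thm3} replacing its simple-CLE analogue as the primary input.
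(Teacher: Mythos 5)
Your proposal follows essentially the route the paper takes: local absolute continuity between the generalized quantum disk and the generalized quantum half-plane, with the change of measure coming from the ratio of the natural power-law disk measures, applied on top of Theorem~\ref{thm3} and then handed off to the detailed arguments of \cite{CLE_LQG}. Two points worth flagging. First, ``Lamperti-type procedure'' is a misnomer for what you then actually do: the passage from the half-plane pair $(R,L)$ to the disk process $\Lambda$ is a Doob $h$-transform (change of measure with $h(\ell)=\ell^{-(\alpha'+1)}$), not a Lamperti transform of $L+R$; the pssMp $\Lambda$ is of course a Lamperti transform of \emph{some} L\'evy process, but that process lives on the log scale and is not $L+R$. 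Your subsequent computation -- multiplying the half-plane L\'evy kernel $A_{\pm}(p)\,l^{-\alpha'-1}\,dl$ by the instantaneous factor $\ell^{\alpha'+1}/(\ell\pm l)^{\alpha'+1}$ -- is exactly the $h$-transform computation, so the substance is correct even though the label is not. Second, the paper does not go directly to the ``follow-the-larger-piece'' process. It first considers a \emph{targeted} exploration toward a fixed boundary point $y$, where the negative-jump indicator is $\mathbf{1}_{h<\widetilde r}$ (resp.\ $\mathbf{1}_{h<\widetilde l}$) rather than $\mathbf{1}_{l<\ell/2}$; this intermediate step is where the explicit Radon--Nikodym derivative $\widetilde\ell_0^{\alpha'+1}/\widetilde\ell_t^{\alpha'+1}$ appears, and -- crucially -- where the target-invariance argument yields the identity $A_{-,R}(p)=A_{-,L}(p)$, on which the proof of Theorem~\ref{thm:betarhoformula} rests. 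The antipodal kernel with $\mathbf{1}_{l<\ell/2}$ then emerges by the $\varepsilon\to 0$ re-targeting argument. Your shortcut reaches the correct kernel for Theorem~\ref{thm4} itself, but you lose this important byproduct, so if you want the full package of Section~5.1 you should retain the intermediate targeted step.
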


The proof, which is essentially identical to that of the corresponding statements in \cite{CLE_LQG} is based on the local absolute continuity between generalized quantum half-planes and generalized quantum disks, as well as on the scaling properties of the generalized boundary length, and of course on Theorem~\ref{thm3}.

The main intermediate step is to first consider the following setup: Suppose that one starts the exploration at a boundary-typical point $x(0)$ of a generalized quantum disk of boundary length $\wt \ell_0$, and that $y$ is the boundary point that lies at counterclockwise generalized boundary length $r_0$ from $x(0)$. We then define the interface $\eta$ in the same way, except that the branching rule when 
the trunk disconnects the remaining to be discovered domain into two pieces is now that the process chooses to go in the direction of $y$. At each time $t$ smaller than the total quantum length $T$ of $\eta$,  we can then define the generalized boundary lengths $\wt L_t$ and $\wt R_t$ of the clockwise and counterclockwise boundary arcs joining $\eta(t)$ to $y$ in the remaining to be explored generalized LQG surface. Then, it turns out that on the event $E_t = \{ t < T \}$, the law of $(\wt R_s - \wt R_0, \wt L_s - \wt L_0)_{s \le t}$ is absolutely continuous with respect to the law of $(R_s, L_s)_{s \le t}$ described in Theorem \ref {thm3}, with Radon-Nikodym derivative given by ${\wt \ell}_0^{\alpha'+1} / {\wt \ell}_t^{\alpha'+1}$ with $\wt \ell_t = \wt R_r + \wt L_t$.
We omit the details of the proof of this fact and refer to \cite {CLE_LQG}. 

One outcome is then the fact that the rates of negative jumps for $\wt R$ (from $\wt r$ to $\wt r - h$) when $\wt L= \wt l$ for this ``targeted process'' are (we write here $\wt \ell = \wt l + \wt r$) 
$$ \wt \nu ((\wt r, \wt l),(\wt r - h, \wt l)) = A_{-,R} (p) 
\frac {{\wt \ell}^{\alpha'+1} 1_{h < \wt r}}{h^{\alpha'+1} ( \wt \ell - h)^{\alpha'+1}}  .$$ 
Similarly, one has the similar formula for the rates of negative jumps of $\wt L$,
$$\wt  \nu ((\wt r, \wt l),(\wt r , \wt l - h)) = A_{-,L} (p)  
\frac {{\wt \ell}^{\alpha'+1} 1_{h < \wt l}}{h^{\alpha'+1} ( \wt \ell - h)^{\alpha'+1}} ,$$ 
and also for the rates of positive jumps -- for instance for those of $\wt R$ --  
$$ \wt \nu ((\wt r, \wt l),(\wt r + h, \wt l))= A_{+,R} (p) 
\frac {{\wt \ell}^{\alpha'+1}}{h^{\alpha'+1} ( \wt \ell + h )^{\alpha'+1}}.$$
We can note that due to the target-invariance of the exploration mechanism of the generalized disk, the processes targeting two boundary points $y$ and $y'$ will coincide up until the (negative) jump corresponding to the time at which it splits $y$ and $y'$. The rate of occurrence of negative jumps corresponding to such a splitting time should be the same for the process targeting $y$ and the one targeting $y'$; in other words, when $\wt r < \wt r'$ and $\wt r + \wt l = \wt r' + \wt l'$, one should have
$$ \wt \nu ((\wt r, \wt l),(\wt r , h )) = \wt \nu ((\wt r', \wt l'), (\wt r' -\wt r  - h,  \wt l' ))$$
for all $h < \wt r' - \wt r$.
This implies immediately that $A_{-,R}(p) = A_{-,L}(p)$. 

To then deduce Theorem \ref {thm4}, one proceeds  like in \cite {CLE_LQG}, by updating the target point after each times that are multiple of $\eps$ to be the ``antipodal'' point of the exploration in the generalized disk that remains to be explored, and to then let $\eps \to 0$. Again, we refer to \cite {CLE_LQG} for details. 

Note also that the fact that $A_{-,L} (p) = A_{-,R} (p)$ allows us to conclude the proof 
of Theorem \ref {thm:betarhoformula}: 
When $p \in (0,1)$, we get that  
$U_L / U_R = A_{+,L} (p) / A_{+,R} (p)$, 
which we know is equal to $p / (1-p)$. Plugging in our formulas for $U_R$ and $U_L$ gives indeed Theorem~\ref{thm:betarhoformula}

\begin {rema}
When one discovers a generalized quantum disk of length $1$ according to the $\CLE_{\kappa'}$ exploration procedure described in Theorem~\ref{thm3}, then the conditional expectation of the total quantum area $\CA_1$ given the information gathered at time $t$ is a martingale. Since the quantum area of the curves $\eta$ and of the $\CLE_{\kappa'}$ loops is clearly $0$, it follows that this conditional expectation is equal to 
\[ \E[ \CA_1] \times \left( \Lambda_t^{2\alpha'} + \sum_{s < t} ( \Lambda_s - \Lambda_{s-})^{2\alpha'} \right).\]
Letting $t \to 0$, one gets that the ``expected total area variation'' induced by the jumps is equal to $0$, i.e., 
$$
 \int_0^\infty \nu (1, 1 +l) [(1 + l)^{2\alpha'} + l^{2\alpha'} - 1 ] dl 
+
\int_0^{1/2} \nu (1, 1 - l) [ (1 - l)^{2\alpha'} + l^{2\alpha'} - 1 ] dl  = 0, 
$$
i.e., 
\begin {equation}
 \label {id} 
 A_+ (p) \int_0^\infty \frac {(1 + l)^{2\alpha'} + l^{2\alpha'} - 1}{l^{\alpha'+1} (1+l)^{\alpha'+1}} dl 
+
A_- (p) \int_0^{1/2} \frac {(1 - l)^{2\alpha'} + l^{2\alpha'} - 1 }{l^{\alpha'+1} (1-l)^{\alpha'+1}} dl  = 0, 
\end {equation}
We note that this identity determines uniquely the ratio $A_+(p) / A_- (p)$, which shows that this quantity is independent of $p$.  

This provides a simple way to do some sanity check on our formulas. One could for instance check from our expressions for $U_R$ and $U_L$ that indeed $A_+(p) / A_-(p) = (U_R + U_L) / 2 = -\cos (\alpha' \pi)$. 
Alternatively, we knew anyway that at $p=1/2$, $U_R = U_L = U_{R+L}$. On the other hand, the value of $U_R$ at $p=1/2$ is already known from Theorem~\ref{thm3} to be $- \cos (\alpha' \pi)$ (recall that by symmetry, in that case $\rho = (\kappa -6)/2$). We can therefore conclude  from (\ref {id}) that $U_{R+L}= - \cos ( \alpha' \pi)$ for all $p \in (0,1)$.  

It is also possible to see directly from (\ref {id}) that the ratio $A_+(p) / A_-(p)$ is
equal to $-\cos (\pi \alpha')$, which provides a further computational sanity check for our formulas for $U_R$ and $U_L$ and indicates the type of computations that are anyway behind the scenes and give rise to trigonometric functions 
(one first notes that  the integral from $0$ to $1/2$ in (\ref {id}) is one half of the integral from $0$ to $1$ of the same expression, and one can then reformulate (\ref{id}) in terms of Beta functions using some analytical continuation tricks, and one concludes using the relation with the Gamma function). 
\end {rema}

\subsection {The natural LQG measure in the CLE gasket}

Exactly as in \cite{CLE_LQG}, Theorem~\ref{thm3} can then be iteratively used in order to describe also the entire exploration tree (obtained when one also continues exploring into the cut-out disks) obtained when one explores a LQG-generalized disk on which one has drawn a nested $\CLE_{\kappa'}$ (the coloring is then not so crucial here). In particular, depending on whether one explores the entire generalized quantum disk or only the $\CLE_{\kappa'}$-gasket (the set of points in the generalized disk that are surrounded by no $\CLE_{\kappa'}$ loop), one obtains  two branching tree structures $\wt {\mathcal T}$ and ${\mathcal T}$ just as in \cite{CLE_LQG}.  Again, just as in \cite{CLE_LQG}, building on fine properties of the  usual branching processes martingales in the latter case, one can derive (in exactly the same way as in \cite{CLE_LQG}) the following fact: 

\begin {prop} 
\label {prop5}
One can define a natural LQG measure $\mu$ in the $\CLE_{\kappa'}$-gasket with the property that for a wide class of open sets $O$ (that generates the Borel $\sigma$-algebra in $D$), 
$\mu (O)$ is the limit in probability as $\eps \to 0$ of $\eps^{\alpha' + 1/2 } N_\eps (O)$, where $N_\eps (O)$ is the number of outermost $\CLE_{\kappa'}$ loops of generalized boundary length in $[\eps, 2\eps]$ that are in $O$. 
\end {prop}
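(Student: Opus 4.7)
The plan is to mirror the argument carried out in \cite{CLE_LQG} for the analogous statement about simple $\CLE_\kappa$ on LQG, with Theorem \ref{thm4} playing the role of the Poissonian exploration structure.

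First, I would iteratively apply Theorem \ref{thm4} to build the branching tree $\CT$ of the gasket exploration: starting from the initial generalized quantum disk $\CD$, each negative jump of the boundary-length process $\Lambda$ cuts off a sub-disk that, by Theorem \ref{thm4}, is itself an independent generalized quantum disk, and becomes a child of the current vertex in $\CT$; the outermost $\CLE_{\kappa'}$ loops appear as marks at the positive jumps. By the self-similarity of the intensity kernels $\nu_\ell(\pm l)$ in Theorem \ref{thm4}, the offspring ratios $\{\ell_{v'}/\ell_v : v' \text{ child of } v\}$ form, across vertices $v$, an i.i.d.\ family of point processes on $(0,1)$. A direct computation (using the scaling of $\Lambda$ together with the beta-type integrals that already appear in the remark following Theorem \ref{thm4}) identifies $\theta = \alpha'+1/2$ as the Malthusian exponent of this self-similar branching structure, so that
\[ M_n := \sum_{v \in \CT,\; |v|=n} \ell_v^{\alpha'+1/2} \]
is a non-negative martingale with respect to the natural filtration of the tree.

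Second, using standard Biggins/Malthusian martingale theory for self-similar branching fragmentations, whose integrability hypotheses are verified by the explicit $\alpha'$-stable power-law tails coming from Theorem \ref{thm4}, I would conclude that $M_n$ converges almost surely and in $L^1$ to a non-degenerate limit $M_\infty$. To produce a measure rather than a scalar, I would associate to each vertex $v \in \CT$ a point $z_v$ in the gasket (the marked boundary point of the corresponding sub-disk will do) and define $\mu(O) := \lim_n \sum_{v \in \CT,\; |v|=n,\; z_v \in O} \ell_v^{\alpha'+1/2}$; a standard monotone-class / tightness argument then promotes $\mu$ to a random locally finite Borel measure supported on the $\CLE_{\kappa'}$-gasket, defined on any open set whose boundary is almost surely $\mu$-null (this is the ``wide class of open sets'' in the proposition).

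Third, to convert this into the counting statement, I would use the positive-jump intensity of Theorem \ref{thm4} to compute that a vertex $v$ with $\ell_v = \ell$ produces, during its own exploration, an expected number of outermost $\CLE_{\kappa'}$ loops of generalized boundary length in $[\eps,2\eps]$ asymptotic to $c(\alpha',p)\,\ell\,\eps^{-\alpha'}$ as $\eps \to 0$. Stopping the tree at the cross-section $\CT_\eps := \{v \in \CT : \ell_v \leq \eps < \ell_{\mathrm{parent}(v)}\}$, an optional-stopping / uniform-integrability argument for the Malthusian martingale yields $\sum_{v \in \CT_\eps,\; z_v \in O} \ell_v^{\alpha'+1/2} \to \mu(O)$. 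Combining the two estimates gives $\eps^{\alpha'+1/2}\,\E[N_\eps(O)] \to c'\,\mu(O)$, and convergence in probability is then obtained from a second-moment computation relying on the conditional independence of sub-disks provided by Theorem \ref{thm4}.

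The main obstacle is this last step: controlling atypical contributions to $N_\eps(O)$, namely a vertex with $\ell_v \gg \eps$ spawning an anomalously large number of $\eps$-sized loops, or a single large loop straddling $\partial O$ and spoiling the spatial localization. Both are handled using the explicit power-law tails of the jump intensities in Theorem \ref{thm4}, which yield higher-moment control, together with a boundary-regularity argument for $z_v$ versus $\partial O$. Structurally the argument is identical to the one carried out in detail in \cite{CLE_LQG} for the simple-$\CLE_\kappa$ case; only the value of the Malthusian exponent and the specific jump kernels differ.
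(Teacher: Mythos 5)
Your overall strategy matches what the paper does: Section~5.2 states that Proposition~\ref{prop5} is proved ``in exactly the same way as in \cite{CLE_LQG}'' by building the branching tree of the gasket exploration from Theorem~\ref{thm4} (recursively exploring the cut-out sub-disks but not the discovered loop interiors), forming the Malthusian martingale of exponent $\alpha'+1/2$, localizing spatially, and converting to the counting statement $\eps^{\alpha'+1/2}N_\eps(O)\to\mu(O)$ via first- and second-moment estimates with a cross-section of the tree at scale $\eps$. The paper provides no further details beyond pointing to \cite{CLE_LQG} and \cite{bbck}, so the structure of your argument is consistent with what the paper intends.

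One concrete slip worth flagging is in your third step. The jump kernel $\nu_\ell(\pm l)$ of Theorem~\ref{thm4} satisfies $\nu_{\ell y}(\ell l') = \ell^{-\alpha'-1}\nu_y(l')$, which makes the exploration length process self-similar with time exponent $\alpha'$: if $\Lambda$ is started from $\ell$, then $\ell^{-1}\Lambda_{\ell^{\alpha'}s}$ is distributed as $\Lambda$ started from~$1$. In particular, the total exploration time of a vertex of generalized boundary length $\ell$ scales like $\ell^{\alpha'}$, not like $\ell$. As a consequence, the expected number of outermost $\CLE_{\kappa'}$ loops of size in $[\eps,2\eps]$ discovered directly in the exploration of a vertex $v$ with $\ell_v=\ell\gg\eps$ is asymptotic to $c\,\ell^{\alpha'}\eps^{-\alpha'}$, not $c\,\ell\,\eps^{-\alpha'}$. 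This matters for internal consistency: summing a direct contribution $\propto\ell_v^{\alpha'}\eps^{-\alpha'}$ over a Malthusian-$\theta$ tree yields a total of order $\eps^{-\theta}$, which matches the normalization $\eps^{\alpha'+1/2}$ in the proposition precisely when $\theta=\alpha'+1/2$; the contribution $\ell_v^1\eps^{-\alpha'}$ would instead lead to a mismatch of exponents. With this correction the rest of your outline goes through unchanged, and it is the same route as the paper's.
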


This measure is also the one that appears as the natural measure on the boundary of ${\mathcal T}$ when defined by branching process martingale methods, such as in \cite{bbck}. Proposition~\ref{prop5} then shows that this measure is actually independent of the coloring of the $\CLE_{\kappa'}$ used to define the exploration tree out of the $\CLE_{\kappa'}$. Note that (as opposed to the corresponding statement in \cite{CLE_LQG}), this could have been also shown directly.  We refer to \cite{CLE_LQG} for more details and background.

\subsection*{Acknowledgements}
The authors were supported by ERC Starting Grant 804166 (JM), the NSF award DMS-1712862 (SS) and the SNF Grant 175505 (WW). WW is part of NCCR SwissMAP. We also thank anonymous referees for their comments.


\begin{thebibliography}{99}

\bibitem{Archer}
E.~Archer.
\newblock Infinite stable looptrees.
\newblock {\em Electronic Journal of Probability}, 25, paper 11, 41 pp., 2020.

\bibitem{BEN_HONG}
S.~Benoist and C.~Hongler.
\newblock The scaling limit of critical {I}sing interfaces is {CLE$_3$}.
\newblock {\em Ann. Probab.}, 47(4):2049--2086, 2019.

\bibitem{bertoin96levy}
J.~Bertoin.
\newblock {\em L\'evy processes}, volume 121 of {\em Cambridge Tracts in
  Mathematics}.
\newblock Cambridge University Press, Cambridge, 1996.

\bibitem{bbck}
J.~Bertoin, T.~Budd, N.~Curien, and I.~Kortchemski.
\newblock Martingales in self-similar growth-fragmentations and their
  connections with random planar maps.
\newblock {\em Probab. Theory Related Fields}, 172(3-4):663--724, 2018.

\bibitem{BBG1}
G.~Borot, J.~Bouttier, and E.~Guitter.
\newblock A recursive approach to the {$O(n)$} model on random maps via nested
  loops.
\newblock {\em J. Phys. A}, 45(4):045002, 38, 2012.

\bibitem{budd}
T.~Budd.
\newblock The peeling process of infinite {B}oltzmann planar maps.
\newblock {\em Electron. J. Combin.}, 23(1):Paper 1.28, 37, 2016.

\bibitem{CN06}
F.~Camia and C.~M. Newman.
\newblock Two-dimensional critical percolation: the full scaling limit.
\newblock {\em Comm. Math. Phys.}, 268(1):1--38, 2006.

\bibitem{ccm2017lengths}
L.~{Chen}, N.~{Curien}, and P.~{Maillard}.
\newblock {The perimeter cascade in critical Boltzmann quadrangulations
  decorated by an $O(n)$ loop model}.
\newblock {\em Ann. IHP (D)}, to appear.

\bibitem{curienkortchemski1}
N.~Curien and I.~Kortchemski.
\newblock Random stable looptrees.
\newblock {\em Electron. J. Probab.}, 19, paper 108, 35pp., 2014.

\bibitem{curienrichier}
N.~Curien and L.~Richier.
\newblock Duality of random planar maps via percolation, 
\newblock {\em Ann. Inst. Fourier}, 70(6):2425-2471, 2020.

\bibitem{dddf2019tightness}
J.~{Ding}, J.~{Dub{\'e}dat}, A.~{Dunlap}, and H.~{Falconet}.
\newblock {Tightness of Liouville first passage percolation for $\gamma \in
  (0,2)$}.
\newblock {\em Pub. Math. IHES}, 
132(1):353-403, 2020.

\bibitem{Detal}
H.~{Duminil-Copin}, M.~{Gagnebin}, M.~{Harel}, I.~{Manolescu}, and
  V.~{Tassion}.
\newblock {Discontinuity of the phase transition for the planar random-cluster
  and Potts models with $q\geq 4$}.
\newblock {\em Ann. ENS}, to appear.

\bibitem{DST}
H.~Duminil-Copin, V.~Sidoravicius, and V.~Tassion.
\newblock Continuity of the {P}hase {T}ransition for {P}lanar
  {R}andom-{C}luster and {P}otts {M}odels with {$1 \le q \le 4$}.
\newblock {\em Comm. Math. Phys.}, 349(1):47--107, 2017.

\bibitem{dms2014mating}
B.~{Duplantier}, J.~{Miller}, and S.~{Sheffield}.
\newblock {Liouville quantum gravity as a mating of trees}.
\newblock {\em Asterisque}, to appear.

\bibitem{DS08}
B.~Duplantier and S.~Sheffield.
\newblock Liouville quantum gravity and {KPZ}.
\newblock {\em Invent. Math.}, 185(2):333--393, 2011.

\bibitem{dl2002levy_trees}
T.~Duquesne and J.-F. Le~Gall.
\newblock Random trees, {L}\'evy processes and spatial branching processes.
\newblock {\em Ast\'erisque} 281, 2002.

\bibitem{ES}
R.~G. Edwards and A.~D. Sokal.
\newblock Generalization of the {F}ortuin-{K}asteleyn-{S}wendsen-{W}ang
  representation and {M}onte {C}arlo algorithm.
\newblock {\em Phys. Rev. D (3)}, 38(6):2009--2012, 1988.

\bibitem{FK}
C.~M. Fortuin and P.~W. Kasteleyn.
\newblock On the random-cluster model. {I}. {I}ntroduction and relation to
  other models.
\newblock {\em Physica}, 57:536--564, 1972.

\bibitem{gw2019fk}
C.~Garban and H.~Wu.
\newblock On the {C}onvergence of {FK}-{I}sing {P}ercolation to {$\rm
  {SLE}(16/3, (16/3)-6)$}.
\newblock {\em J. Theoret. Probab.}, 33(2):828--865, 2020.

\bibitem{GRIM_CLUSTER_BOOK}
G.~Grimmett.
\newblock {\em The random-cluster model}, volume 333 of {\em Grundlehren der
  Mathematischen Wissenschaften [Fundamental Principles of Mathematical
  Sciences]}.
\newblock Springer-Verlag, Berlin, 2006.

\bibitem{gm2019exunique}
E.~{Gwynne} and J.~{Miller}.
\newblock {Existence and uniqueness of the Liouville quantum gravity metric for
  $\gamma \in (0,2)$}.
\newblock {\em Invent. Math.},  to appear.

\bibitem{hagg1999fuzzypotts}
O.~H{\"a}ggstr{\"o}m.
\newblock Positive correlations in the fuzzy {P}otts model.
\newblock {\em Ann. Appl. Probab.}, 9(4):1149--1159, 1999.

\bibitem{hk1971quantum}
R.~H{\o}egh-Krohn.
\newblock A general class of quantum fields without cut-offs in two space-time
  dimensions.
\newblock {\em Comm. Math. Phys.}, 21:244--255, 1971.

\bibitem{JM}
S.~Jain and S.~D. Mathur.
\newblock World-sheet geometry and baby universes in 2d quantum gravity.
\newblock {\em Physics Letters B}, 286(3):239--246, 1992.

\bibitem{kahane1985gmc}
J.-P. Kahane.
\newblock Sur le chaos multiplicatif.
\newblock {\em Ann. Sci. Math. Qu\'{e}bec}, 9(2):105--150, 1985.

\bibitem{ks2016fkconvergence}
A.~{Kemppainen} and S.~{Smirnov}.
\newblock {Conformal invariance in random cluster models. II. Full scaling
  limit as a branching SLE}.
\newblock {\em arXiv e-prints}, page arXiv:1609.08527, Sep 2016.

\bibitem{klebanov}
I.~Klebanov.
\newblock Touching random surfaces and liouville gravity.
\newblock {\em Physical Review D}, 51:1836--1841, 1995.

\bibitem{LSW04}
G.~F. Lawler, O.~Schramm, and W.~Werner.
\newblock Conformal invariance of planar loop-erased random walks and uniform
  spanning trees.
\newblock {\em Ann. Probab.}, 32(1B):939--995, 2004.

\bibitem{LGLJ}
J.-F. Le~Gall and Y.~Le~Jan.
\newblock Branching processes in {L}\'{e}vy processes: the exploration process.
\newblock {\em Ann. Probab.}, 26(1):213--252, 1998.

\bibitem{mv1995fuzzypotts}
C.~Maes and K.~Vande~Velde.
\newblock The fuzzy {P}otts model.
\newblock {\em J. Phys. A}, 28(15):4261--4270, 1995.

\bibitem{MS_IMAG}
J.~Miller and S.~Sheffield.
\newblock Imaginary geometry {I}: interacting {SLE}s.
\newblock {\em Probab. Theory rel. Fields}, 164(3-4):553--705, 2016.

\bibitem{MS_IMAG3}
J.~Miller and S.~Sheffield.
\newblock Imaginary geometry {III}: reversibility of {$\rm SLE_\kappa$} for
  {$\kappa\in(4,8)$}.
\newblock {\em Ann. of Math. (2)}, 184(2):455--486, 2016.

\bibitem{MS_IMAG4}
J.~Miller and S.~Sheffield.
\newblock Imaginary geometry {IV}: interior rays, whole-plane reversibility,
  and space-filling trees.
\newblock {\em Probab. Theory Related Fields}, 169(3-4):729--869, 2017.

\bibitem{qlebm}
J.~Miller and S.~Sheffield.
\newblock Liouville quantum gravity and the {B}rownian map {I}: the {${\rm
  QLE}(8/3,0)$} metric.
\newblock {\em Invent. Math.}, 219(1):75--152, 2020.


\bibitem{qlebm2}
J.~{Miller} and S.~{Sheffield}.
\newblock {Liouville quantum gravity and the Brownian map II: geodesics and
  continuity of the embedding}.
\newblock {\em Ann. Probab.}, to appear.

\bibitem{qlebm3}
J.~{Miller} and S.~{Sheffield}.
\newblock {Liouville quantum gravity and the Brownian map III: the conformal
  structure is determined}.
  \newblock {\em Probab. Theory Related. Fields}, to appear. 


\bibitem{cle_percolations}
J.~Miller, S.~Sheffield, and W.~Werner.
\newblock C{LE} percolations.
\newblock {\em Forum Math. Pi}, 5:e4, 102, 2017.

\bibitem{msw2016notdetermined}
J.~{Miller}, S.~{Sheffield}, and W.~{Werner}.
\newblock {Non-simple SLE curves are not determined by their range}.
\newblock {\em J. Europ. Math. Soc.}, 22:669–716, 2020.

\bibitem{CLE_LQG}
J.~{Miller}, S.~{Sheffield}, and W.~{Werner}.
\newblock {Simple Conformal Loop Ensembles on Liouville Quantum Gravity}.
\newblock {\em arXiv e-prints}, page arXiv:2002.05698, Feb. 2020.

\bibitem{mw2018connection}
J.~Miller and W.~Werner.
\newblock Connection probabilities for conformal loop ensembles.
\newblock {\em Comm. Math. Phys.}, 362(2):415--453, 2018.

\bibitem{MW_INTERSECTIONS}
J.~Miller and H.~Wu.
\newblock Intersections of {SLE} paths: the double and cut point dimension of
  {SLE}.
\newblock {\em {Probab. Theory and Related Fields}}, pages 1--61, 2016.

\bibitem{S0}
O.~Schramm.
\newblock Scaling limits of loop-erased random walks and uniform spanning
  trees.
\newblock {\em Israel J. Math.}, 118:221--288, 2000.

\bibitem{SHE_CLE}
S.~Sheffield.
\newblock Exploration trees and conformal loop ensembles.
\newblock {\em Duke Math. J.}, 147(1):79--129, 2009.

\bibitem{SHE_WELD}
S.~Sheffield.
\newblock Conformal weldings of random surfaces: {SLE} and the quantum gravity
  zipper.
\newblock {\em Ann. Probab.}, 44(5):3474--3545, 2016.

\bibitem{SHE_WER_CLE}
S.~Sheffield and W.~Werner.
\newblock Conformal loop ensembles: the {M}arkovian characterization and the
  loop-soup construction.
\newblock {\em Ann. of Math. (2)}, 176(3):1827--1917, 2012.

\bibitem{S01}
S.~Smirnov.
\newblock Critical percolation in the plane: conformal invariance, {C}ardy's
  formula, scaling limits.
\newblock {\em C. R. Acad. Sci. Paris S\'er. I Math.}, 333(3):239--244, 2001.

\bibitem{s2010ising}
S.~Smirnov.
\newblock Conformal invariance in random cluster models. {I}. {H}olomorphic
  fermions in the {I}sing model.
\newblock {\em Ann. of Math. (2)}, 172(2):1435--1467, 2010.

\end{thebibliography}
\end{document}